\newtheorem{thm}{Theorem}[section]
\newtheorem{cor}[thm]{Corollary}
\newtheorem{lm}[thm]{Lemma}
\newtheorem{prop}[thm]{Proposition}
\newtheorem{con}[thm]{Conjecture}
\theoremstyle{definition}
\newtheorem{df}[thm]{Definition}
\newtheorem{rem}[thm]{Remark}
\newtheorem{exa}[thm]{Example}
\newtheorem{prob}[thm]{Problem}
\newtheorem{dis}[thm]{Discussion} 
\numberwithin{equation}{section}
\def\NN{{\mathbb{N}}}
\def\ZZ{{\mathbb{Z}}}
\def\AA{{\mathbb{A}}}
\def\DD{{\mathbb{D}}}
\def\EE{{\mathbb{E}}}
\def\CA{{\cal A}}
\def\CC{{\cal C}}
\def\CB{{\cal B}}
\def\CD{{\cal D}}
\def\CE{{\cal E}}
\def\CF{{\cal F}}
\def\CG{{\cal G}}
\def\CT{{\cal T}}
\def\CL{{\cal L}}
\def\CR{{\cal R}}
\def\CS{{\cal S}}
\def\CX{{\cal X}}
\def\res{\textnormal{res}} \def\wt{\widetilde} \def\und#1{\underline{#1}}
\def\epv {{ $\mbox{}$\hfill \qedsymbol }}
\def\wh#1{\widehat{#1}}
\def\Im{\textnormal{Im}}
\def\snull{\textnormal{ }}
\def\ov{\overline}
\def\wt{\widetilde}
\def\ra{\rightarrow}
\def\lmod{\mbox{{\rm -mod}}}
\def\lMod{\mbox{{\rm -Mod}}}
\def\lMOD{\mbox{{\rm -MOD}}}
\def\mod{\mbox{{\rm mod}}}
\def\ind{\mbox{{\rm ind}}}
\newcommand{\Rep}{\mathrm{Rep}}
\def\Mod{\mbox{{\rm Mod}}}
\def\Coker{\mbox{{\rm Coker}}}  \def\Coim{\mbox{{\rm Coim}}}
\def\supp{\mbox{\rm supp}}
\def\MOD{{\rm MOD}} \def\Ind{{\rm Ind}} \def\ind{{\rm ind}}
\def\Add{\textnormal{Add}} \def\res{\textnormal{res}}
\DeclareMathOperator{\ob}{ob}
\let\mod=\undefined
\DeclareMathOperator{\KG}{KG}
\DeclareMathOperator{\Ker}{Ker}
\DeclareMathOperator{\op}{op}
\DeclareMathOperator{\End}{End}
\DeclareMathOperator{\Hom}{Hom}
\DeclareMathOperator{\mod}{mod}
\begin{document}

\baselineskip=17pt


\title{Covering theory, functor categories and the Krull-Gabriel dimension}

\author{Grzegorz Pastuszak${}^{*}$}




\date{}

\maketitle

\begin{center}Dedicated to the memory of Professor Andrzej Skowro\'nski\end{center}

\renewcommand{\thefootnote}{}
\footnote{${}^{*}$Faculty of Mathematics and Computer Science, Nicolaus Copernicus University, Chopina 12/18, 87-100 Toru\'n, Poland, e-mail: past@mat.umk.pl.}
\footnote{MSC 2020: Primary 16G20; Secondary 16B50, 18E10.}
\footnote{Key words and phrases: Krull-Gabriel dimension, Galois coverings, functor categories, tensor product bifunctor, representation type, domestic algebra.}

\renewcommand{\thefootnote}{\arabic{footnote}}
\setcounter{footnote}{0}

\begin{abstract} Assume that $K$ is an algebraically closed field, $R$ a locally bounded $K$-category, $G$ an admissible group of $K$-linear automorphisms of $R$ and $F:R\ra A\cong R\slash G$ the associated Galois $G$-covering functor. In the first part of the paper we show that $\KG(R)\leq\KG(A)$ where $\KG$ denotes the \emph{Krull-Gabriel dimension}. In other words, we obtain a general fact that the \emph{Galois covering functors do not increase the Krull-Gabriel dimension.} This result is proved by developing the \emph{Galois covering theory of functor categories}, based on the existence of the \emph{tensor product bifunctor} for categories of modules over small $K$-categories. We understand this theory as the theory of the left and the right adjoint functors $\Phi,\Theta:\MOD(\CR)\ra\MOD(\CA)$ to the pull-up functor $\Psi=(F_{\lambda})_{\bullet}:\MOD(\CA)\ra\MOD(\CR)$, along the classical push-down functor $F_{\lambda}:\CR\ra\CA$ where $\CR=\mod(R)$, $\CA=\mod(A)$ and $(F_{\lambda})_{\bullet}=(-)\circ F_{\lambda}$. In the case $F_{\lambda}$ is dense, $\Phi$ and $\Theta$ are natural generalizations of the classical push-down functors $F_{\lambda}$ and $F_{\rho}$ to the level of functor categories. Generally, $\Phi$ and $\Theta$ restrict to categories $\CF(R),\CF(A)$ of \emph{finitely presented functors} and the restricted functors $\Phi,\Theta:\CF(R)\ra\CF(A)$ coincide. The second part of the paper is devoted to studying the functor $\Phi:\CF(R)\ra\CF(A)$. First we show that $\Phi$ is a Galois $G$-precovering of functor categories. Then we consider an important special case when the category $R$ is simply connected and locally representation-finite. In this case, $\Phi:\CF(R)\ra\CF(A)$ may be studied in terms of classical covering theory which allows to give an example of $\Phi$ which is not dense. This justifies the introduction of the \emph{functors of the first kind} and the \emph{functors of the second kind}, following the terminology of Dowbor and Skowro\'nski. In the final part of the paper, we give applications of our results in determining the Krull-Gabriel dimension and the representation type of some classes of algebras. Last but not least, we discuss applications to the conjecture of M. Prest, relating the Krull-Gabriel dimension of an algebra with its representation type.
\end{abstract}

\tableofcontents

\section{Introduction}

Throughout, $K$ is a fixed algebraically closed field. We denote by $\MOD(K)$ and $\mod(K)$ the categories of all $K$-vector spaces and all finite dimensional $K$-vector spaces, respectively. By an \textit{algebra} $A$ we mean a finite dimensional associative basic $K$-algebra with a unit. By an \textit{$A$-module} we mean a right $A$-module. We denote by $\MOD(A)$ and $\mod(A)$ the categories of all $A$-modules and all finitely generated $A$-modules, respectively. 

This paper links three theories which play a prominent role in the representation theory of finite dimensional $K$-algebras: the \emph{covering theory of locally bounded $K$-categories}, the \emph{theory of functor categories} and the \emph{theory of Krull-Gabriel dimension}.

Covering techniques in representation theory emerged in the papers of C. Riedtmann \cite{Rie}, P. Gabriel \cite{Ga}, K. Bongartz and P. Gabriel \cite{BoGa}, R. Martinez-Villa and J. A. de la Pe\~na \cite{MP}, and P. Dowbor and A. Skowro\'nski \cite{DoSk}. The goal was natural for the classical representation theory, namely, to determine and compute the indecomposable modules over finite dimensional algebras. Indeed, it turns our that when the field $K$ is algebraically closed, then any algebra $A$ is isomorphic with a \emph{bound quiver $K$-algebra} which in turn can be viewed as a finite \emph{bound quiver $K$-category} \cite{Ga1,Ga2}, see also Section 2.1. Bound quiver $K$-categories are the main examples of \emph{locally bounded $K$-categories}. A \emph{covering functor} is some $K$-linear functor $F:R\ra A$ between locally bounded $K$-categories which closely relates representation theories of $R$ and $A$. Similarly to the covering theory of topological spaces, the quiver of $R$ has less loops which makes representation theory of $R$ simpler then that of $A$. Moreover, for any algebra $A$, there is always the \emph{universal covering functor} $R\ra A$ \cite{MP}. This is a particular case of a \emph{Galois covering}, relating the categories $R$ and $A$ even closer, see Section 2.2 for definitions. Therefore, covering theory allows to describe indecomposable modules over an algebra in terms of indecomposable modules over some bound quiver $K$-category which is usually easier to study. This property is a source of fruitful applications of covering techniques in the representation theory of finite dimensional $K$-algebras.

Functor categories have been widely used in the representation theory of finite dimensional algebras. Pioneer work in this direction was done by M. Auslander in \cite{Au0} where the author proves that if $A$ is an algebra, then a functor $S:\mod(A)\ra\mod(K)$ is finitely presented and simple if and only if there exists a right minimal almost split map $g:M\ra N$ in $\mod(A)$ such that the sequence of functors $${}_{A}(-,M)\xrightarrow{{}_{A}(-,g)}{}_{A}(-,N)\ra S\ra 0$$ is a minimal projective presentation of $S$, see \cite[Chapter IV]{AsSiSk} for details. Furthermore, in \cite[Corollary 3.14]{Au} Auslander proved that representation-finite algebras are exactly those algebras for which all finitely presented functors are of finite length. These two results initiated the \textit{functorial approach} to the representation theory of algebras. This approach is now commonly used in the field. 

A natural continuation of the line of research started by Auslander is the study of the \textit{Krull-Gabriel filtration} of the category of finitely presented functors and the \textit{Krull-Gabriel dimension}. Assume that $A$ is an algebra and let $\CF(A)$ be the category of all finitely presented contravariant $K$-linear functors $\mod(A)\ra\mod(K)$. Recall that a functor $T:\mod(A)\ra\mod(K)$ is \emph{finitely presented} if and only if $T\cong\Coker{}_{A}(-,f)$ where $f$ is an $A$-module homomorphism. We refer to Section 2.4 for details of this definition. The associated \textit{Krull-Gabriel filtration} $(\CF(A)_{\alpha})_{\alpha}$ \cite{Po} is the filtration $$\CF(A)_{-1}\subseteq\CF(A)_{0}\subseteq\CF(A)_{1}\subseteq\hdots\subseteq\CF(A)_{\alpha}\subseteq\CF(A)_{\alpha+1}\subseteq\hdots$$ of $\CF(A)$ by Serre subcategories, defined recursively as follows: 
\begin{enumerate}[\rm(1)]
	\item $\CF(A)_{-1}=0$,
	\item $\CF(A)_{\alpha+1}$ is the Serre subcategory of $\CF(A)$ formed by all functors having finite length in the quotient category $\CF(A)\slash\CF(A)_{\alpha}$, for any ordinal number $\alpha$,
	\item $\CF(A)_{\beta}=\bigcup_{\alpha<\beta}\CF(A)_{\alpha}$, for any limit ordinal $\beta$.
\end{enumerate} Following \cite{Ge2,Geigle1985}, the \textit{Krull-Gabriel dimension} $\KG(A)$ of $A$ is defined as the smallest ordinal number $\alpha$ such that $\CF(A)_{\alpha}=\CF(A)$, if such a number exists. We set $\KG(A)=\infty$ if this is not the case. If $\KG(A)\in\NN$, then the Krull-Gabriel dimension of $A$ is \textit{finite}. If $\KG(A)=\infty$, then the Krull-Gabriel dimension of $A$ is \textit{undefined}. We refer to Section 2.3 for the definition of Krull-Gabriel dimension of locally bounded $K$-categories and small abelian categories in general.

A fundamental motivation to study the Krull-Gabriel dimension comes from the fact that the Krull-Gabriel filtration of the category $\CF(A)$ leads to a hierarchy of exact sequences in $\mod(A)$ where the almost split sequences form the lowest level \cite{Geigle1985,Sch3,Sch2}. Another one is the following conjecture due to M. Prest \cite{Pr2}. This conjecture fits naturally into the line of research initiated by Auslander in \cite{Au0}.

\begin{con}\label{00c1} Assume that $K$ is an algebraically closed field. A finite dimensional $K$-algebra $A$ is of domestic representation type if and only if the Krull-Gabriel dimension $\KG(A)$ of $A$ is finite.\footnote{We recall that Prest made a similar conjecture, relating the representation type of an algebra $A$ with existence of \emph{super-decomposable pure-injective $A$-modules}, see \cite{Pr3,Zi}.}
\end{con}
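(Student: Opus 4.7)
The statement in question is Prest's Conjecture, a well-known open problem, so my plan is to indicate how the tools developed in the paper can be brought to bear on each of the two implications and to identify the remaining obstacle, rather than to claim a complete proof.

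The implication $\KG(A)<\infty\Rightarrow A$ domestic is the one to which the main inequality $\KG(R)\leq\KG(A)$ applies most directly. Fix a Galois covering $F:R\to A$ with $R$ as close to simply connected as possible, typically the universal cover. The inequality immediately gives $\KG(R)<\infty$. Under the hypothesis that $R$ is simply connected one has much stronger structural tools than for $A$; combining this with known positive cases (Schr\"oer's results for special biserial and for strongly simply connected algebras, and Geigle's for tame hereditary algebras) one aims to conclude that $R$ is of polynomial growth, and in fact domestic. Since representation type is preserved along admissible Galois coverings, this would force $A$ to be domestic as well.

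For the converse $A$ domestic $\Rightarrow\KG(A)<\infty$, the main inequality runs the wrong way, and one is forced to exploit the finer structure of the adjoint triple $(\Phi,\Psi,\Theta)$ between $\CF(R)$ and $\CF(A)$. The idea is to transport the Krull-Gabriel filtration of $\CF(R)$ across the adjunction and to show that $\Phi$ and $\Theta$ send the level $\CF(R)_{\alpha}$ into some $\CF(A)_{\alpha+c}$, with a uniform shift $c$ controlled by the group $G$ and the combinatorics of the cover; combined with the denseness and precovering properties established in the second part of the paper, and with the fact that the universal cover of a domestic algebra is expected to be locally representation-finite (so that $\KG(R)=0$), this would yield $\KG(A)<\infty$.

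The genuine obstruction, and the reason the conjecture remains open, lies in the last step of each direction. For the first implication, no covering-theoretic reduction presently rules out an ``almost wild'' behaviour in the cover that would prevent it from being domestic; for the second, the functors of the second kind isolated in the paper may contribute to $\CF(A)$ phenomena invisible in $\CF(R)$, so the shift $c$ need not be finite in general and the construction of a transfinite chain of finitely presented functors witnessing $\KG(A)=\infty$ remains the hard analytic step. A quantitative control of these two defects is precisely what is missing; the contribution of the framework developed here is to provide the covering reduction and to reformulate the remaining difficulty in the clean language of functor categories.
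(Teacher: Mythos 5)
The statement you were asked about is Conjecture \ref{00c1}, which is an open problem: the paper contains no proof of it, only partial results (Theorem \ref{t13} for tame algebras with strongly simply connected Galois coverings, Theorem \ref{t14} for two families of weighted surface algebras) and a programme in Section 7. Your decision to present a strategy rather than claim a proof is therefore the only honest option, and your treatment of the implication $\KG(A)<\infty\Rightarrow A$ domestic does match what the paper actually does where it can: pass to a simply connected cover $R$, use Theorem \ref{t3} to get $\KG(R)<\infty$, and exploit the structure theory of $R$ (hypercritical, pg-critical and tubular convex subcategories, $G$-periodic lines) to rule out non-domestic behaviour. This is precisely the proof of Theorem \ref{t13} and the first half of Discussion \ref{dis1}.

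Your sketch of the converse, however, diverges from the paper and contains a concretely false step. You assert that the universal cover of a domestic algebra is ``expected to be locally representation-finite (so that $\KG(R)=0$)''. Already the Kronecker algebra refutes this: it is domestic (tame hereditary, $\KG(A)=2$ by Geigle), its universal cover is the infinite zigzag line, which carries indecomposable interval modules of arbitrary length through any fixed vertex and so is \emph{not} locally representation-finite, and $\KG(R)=2\neq 0$. The paper's own mechanism for the converse is quite different: via Theorem \ref{t15} one controls the indecomposable $A$-modules of the second kind as $V\otimes_{K[T,T^{-1}]}F_{\lambda}(M_{L})$ for $G$-periodic lines $L$, and Problems \ref{pro1} and \ref{pro2} ask whether finiteness of $\KG(R)$ is equivalent to domesticity of the first-kind part and whether $\KG(R)=\KG(A)$. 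There is no ``uniform shift $c$'' of the Krull--Gabriel filtration anywhere in the paper; on the contrary, Theorem \ref{t9} shows that in the cases where anything is known ($R$ locally support-finite, intervally finite, $G$ torsion-free) the filtration is matched level by level, i.e.\ the shift is zero, and the genuine obstruction is the contribution of the functors of the second kind (Definition \ref{d1}, Example \ref{e1}), not a filtration displacement.
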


We refer to Chapter XIX of \cite{SiSk3} for precise definitions of finite, tame and wild representation type of an algebra, and the stratification of tame representation type into domestic, polynomial and nonpolynomial growth \cite{SkBC}. We refer to Introduction of \cite{P4} for more motivations to study the Krull-Gabriel dimension.

It is clear that the known results support the conjecture of Prest. We list them here for the sake of completeness. As already mentioned, M. Auslander proves in \cite{Au} that $\KG(A)=0$ if and only if the algebra $A$ is of finite type. H. Krause shows in \cite[11.4]{Kr2} that $\KG(A)\neq 1$, for any algebra $A$. If $A$ is a tame hereditary algebra, then $\KG(A)=2$ by the result of W. Geigle \cite[4.3]{Ge2}. A. Skowro\'nski shows in \cite[Theorem 1.2]{Sk5} that if $A$ is a cycle-finite algebra \cite{AsSk1} of domestic representation type, then $\KG(A)=2$. If $A$ is a strongly simply connected algebra \cite{Sk1}, then $A$ is of domestic type if and only if $\KG(A)$ is finite, by the result of M. Wenderlich \cite{We}. R. Laking, M. Prest and G. Puninski show in \cite{LPP} that string algebras \cite{SkWa} of domestic representation type have finite Krull-Gabriel dimension. Many important classes of algebras are known to have the Krull-Gabriel dimension undefined. Indeed, this holds for strictly wild algebras and wild algebras (see \cite[Chapter 13]{Pr} and \cite{GP,P5}), non-domestic string algebras (see \cite[Proposition 2]{Sch2}), tubular algebras (see \cite{Geigle1985,Ge2}), pg-critical algebras \cite{NoSk}, strongly simply connected algebras of nonpolynomial growth \cite{Sk2} and some algebras with strongly simply connected Galois coverings \cite{Sk3} (see \cite{KaPa1,KaPa2,KaPa3}).\footnote{The results of \cite{KaPa1,KaPa2,KaPa3} show that the width of the lattice of all pointed modules \cite{KaPa2} over any of these algebras is infinite and hence their Krull-Gabriel dimension is undefined, see for example \cite{Pr2}.}

Now we briefly describe the main results from our previous work \cite{P4,P6} where we show some first connections between covering theory, functor categories and Krull-Gabriel dimension. Our goal in these papers was to determine the Krull-Gabriel dimension for the standard self-injective algebras of polynomial growth \cite{Sk4}, partially by applying Skowro\'nski's characterization in \cite[Theorem 1.2]{Sk5}. The very nature of this class of algebras suggested to consider Galois $G$-coverings $R\ra A\cong R\slash G$ such that $R$ is a locally bounded \emph{locally support-finite} \cite{DoSk,DoLeSk} \emph{intervally finite} \cite[2.1]{BoGa} $K$-category and $G$ an admissible torsion-free group of $K$-linear automorphisms of $R$, see Section 2 for most of definitions. We have proved that in this special situation the Krull-Gabriel dimension is preserved by such Galois coverings, that is, $\KG(R)=\KG(A)$ \cite[Theorem 1.4]{P6}.\footnote{In \cite{P6} we correct an ill argument from Theorem 6.3 of \cite{P4}. In fact, the faultless results of \cite{P4} can be seen as showing only that $\KG(R)\leq\KG(A)$ if $R$ is locally support-finite and the group $G$ is torsion-free whereas \cite{P6} shows that $\KG(A)\leq\KG(R)$, under seemingly necessary assumptions that $R$ is locally support-finite and intervally finite.} We refer to Section 4.3 for details about this result, see in particular Theorem \ref{t7} and \ref{t8}, as well as a sketch of the proof. 

Because of abundance of covering techniques in representation theory, \cite[Theorem 1.4]{P6} is very useful in the study of Krull-Gabriel dimension. Indeed, in \cite{P4} we calculate the Krull-Gabriel dimension of the tame locally support-finite repetitive categories \cite{AsSk4} (see \cite[Theorem 7.3]{P4}) and then we deduce the dimension for standard self-injective algebras of polynomial growth (see \cite[Theorem 8.1]{P4}), as expected. Moreover, in \cite{J-PP1} we apply the theorem to relate Krull-Gabriel dimensions of repetitive categories, cluster repetitive categories and cluster-tilted algebras \cite{As} (see \cite[Theorem 3.4]{J-PP1}). Finally, in \cite[Theorem 3.6]{J-PP1} we characterize the Krull-Gabriel dimension for these classes. Recent applications for the weighted surface algebras \cite{ErSk1} are given in \cite{EJ-PP}.

We observe in \cite{P4,P6} that a Galois $G$-covering $F:R\ra A$ naturally induces some functors on the level of the functor categories. Indeed, assume that $\CR=\mod(R)$ and let $\mod(\CR)$ be the category of all contravariant $K$-linear functors $\CR\ra\mod(K)$. Let $F_{\lambda}:\mod(R)\ra\mod(A)$ be one of the \emph{push-down functors}, see Section 2.2 for the definition. Then we define $\Psi:\CF(A)\ra\mod(\CR)$ as $(-)\circ F_{\lambda}$ and $\Phi:\CF(R)\ra\CF(A)$ as $$\Phi(\Coker{}_{R}(*,f))=\Coker{}_{A}(-,F_{\lambda}(f)),$$ for any $R$-module homomorphism $f$, see \cite[Section 5]{P4} or \ref{r3} for more details. We show in \cite[Theorem 5.5]{P4} that $\Phi:\CF(R)\ra\CF(A)$ is a \emph{Galois $G$-precovering of functor categories} \ref{0d1}. In particular, we show in \cite[Theorem 5.5 (3)]{P4} that $\Phi$ is the right adjoint to $\Psi$.\footnote{In fact, the functor $\Phi:\CF(R)\ra\CF(A)$ is both the right and the left adjoint to $\Psi$.} This result, together with \cite[Theorem 1.3]{P6} (see Section 4.3), yields that $\KG(R)=\KG(A)$.

The present paper develops the above ideas much further. Namely, we introduce the \emph{Galois covering theory of functor categories} which is a vast generalization of covering techniques that appeared in \cite{P4,P6}. On the one hand, we believe it is the appropriate general setting for covering theory of functor categories. For this setting, our previous results appear as a very special case. On the second hand, it allows to draw important general conclusions for the theory of Krull-Gabriel dimension, with many potential applications. We show in the paper a flavor of these applications, particularly in the context of the conjecture of Prest \ref{00c1}. Finally, we stress that the paper contains a number of open problems, usually formulated as remarks with some preliminary thoughts.

Now we describe the organization of the paper, as well as our main results, in more detail. Section 2 is preliminary and contains some basic information about modules over locally bounded $K$-categories, Galois coverings of locally bounded $K$-categories, Krull-Gabriel dimension of abelian categories and functor categories.

Section 3 starts with recalling the \emph{general tensor product bifunctor} \cite{FPN,Mi} over arbitrary $K$-categories. In order to describe our main results, we introduce some notation, described in detail in Section 3.1. Namely, if $\CA$ is a small $K$-category, then we denote by $\MOD(\CA)$ the category of all \emph{right $\CA$-modules}, that is, the category of all contravariant $K$-linear functors $\CA\ra\MOD(K)$. Assume that $F:R\ra A$ is a given Galois $G$-covering functor and set $\CR=\mod(R)$, $\CA=\mod(A)$. Let $\Psi=(F_{\lambda})_{\bullet}=(-)\circ F_{\lambda}$ be the pull-up functor along $F_{\lambda}:\mod(R)\ra\mod(A)$. Based on the tensor product bifunctor, we obtain the following theorem which summarizes the main results of Section 3.

\begin{thm}\label{00t1} Assume that $F:R\ra A$ is a Galois $G$-covering functor. The following assertions hold:
\begin{enumerate}[\rm(1)]
  \item \textnormal{\ref{c1}} The pull-up functor $\Psi=(F_{\lambda})_{\bullet}=(-)\circ F_{\lambda}:\MOD(\CA)\ra\MOD(\CR)$ has the functor $$\Phi:=?\otimes_{\CR}[{}_{A}(-,F_\lambda(*))]:\MOD(\CR)\ra\MOD(\CA)$$ as the left adjoint and the functor $$\Theta:={}_{\CR}({}_{A}(F_{\lambda}(*),-),?):\MOD(\CR)\ra\MOD(\CA)$$ as the right adjoint.
  \item \textnormal{\ref{t2}} We have $\Phi(\CF(R))\subseteq\CF(A)$, $\Theta(\CF(R))\subseteq\CF(A)$ and $\Phi|_{\CF(R)}\cong\Theta|_{\CF(R)}$, hence both functors $\Phi,\Theta:\CF(R)\ra\CF(A)$ are exact. Moreover, these functors are also faithful.
  \item \textnormal{\ref{t3}} We have $\KG(R)\leq\KG(A)$.
\end{enumerate}
\end{thm}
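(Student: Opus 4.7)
I would prove (1) by recognising $\Phi$ and $\Theta$ as the tensor-hom adjoints induced by the two canonical bimodules attached to $F_{\lambda}$, deduce (2) by computing both functors on representables via (co-)Yoneda combined with the Galois-covering identity for Hom-spaces, and derive (3) by transfinite induction on the Krull-Gabriel filtration, using only the exactness and faithfulness of $\Phi|_{\CF(R)}$ established in (2).

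\textbf{Parts (1) and (2).} The pull-up $\Psi=(F_{\lambda})_{\bullet}$ is just restriction along the $K$-linear functor $F_{\lambda}\colon\CR\ra\CA$, so the general tensor-hom formalism for small $K$-categories recalled at the start of Section~3 produces a left adjoint $?\otimes_{\CR}{}_{A}(-,F_{\lambda}(*))$ and a right adjoint ${}_{\CR}({}_{A}(F_{\lambda}(*),-),?)$ automatically; these are $\Phi$ and $\Theta$, and the unit-counit verifications reduce to coend calculations. For $M\in\mod(R)$, the co-Yoneda lemma gives
\[ \Phi({}_{R}(*,M))(N) \;=\; \int^{X\in\CR}{}_{R}(X,M)\otimes_{K}{}_{A}(N,F_{\lambda}(X)) \;\cong\; {}_{A}(N,F_{\lambda}(M)), \]
so $\Phi$ sends representables to representables, and as a left adjoint it preserves cokernels; since every finitely presented functor is such a cokernel, $\Phi(\CF(R))\subseteq\CF(A)$. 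For $\Theta$, the fundamental Galois-covering identity ${}_{A}(F_{\lambda}(X),F_{\lambda}(M))\cong\bigoplus_{g\in G}{}_{R}(X,gM)$ reduces the end defining $\Theta({}_{R}(*,M))$ to a standard Yoneda computation yielding again ${}_{A}(-,F_{\lambda}(M))$; hence $\Theta(\CF(R))\subseteq\CF(A)$ and $\Phi|_{\CF(R)}\cong\Theta|_{\CF(R)}$. The common restriction is then simultaneously right exact (as a left adjoint) and left exact (as a right adjoint), hence exact; faithfulness reduces, via projective presentations and the $5$-lemma, to faithfulness of $F_{\lambda}$ on Hom-spaces in $\mod(R)$, which is a basic feature of Galois coverings.

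\textbf{Part (3) and the main obstacle.} With $\Phi\colon\CF(R)\ra\CF(A)$ exact and faithful, I would prove by transfinite induction on $\alpha$ the two parallel statements
\[ \Phi(\CF(R)_{\alpha})\subseteq\CF(A)_{\alpha} \quad\text{and}\quad \bigl(\,\Phi(X)\in\CF(A)_{\alpha}\;\Longrightarrow\;X\in\CF(R)_{\alpha}\,\bigr). \]
The base $\alpha=-1$ is immediate because an exact faithful functor reflects the zero object, and limit steps are formal from the union definition. In the successor step, the pair of statements at level $\alpha$ gives $\Phi^{-1}(\CF(A)_{\alpha})=\CF(R)_{\alpha}$, so $\Phi$ descends to a well-defined functor $\ov\Phi\colon\CF(R)\slash\CF(R)_{\alpha}\ra\CF(A)\slash\CF(A)_{\alpha}$ which inherits exactness and, thanks to this equality, faithfulness; a Jordan-H\"older argument then shows that any exact faithful functor between abelian categories reflects the property of having finite length, yielding both statements at level $\alpha+1$. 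Specialising to $\alpha=\KG(A)$ gives $\CF(R)=\CF(R)_{\alpha}$ and hence $\KG(R)\leq\KG(A)$. The main obstacle I anticipate is precisely the successor step: verifying that $\ov\Phi$ remains faithful on the Serre quotient is not formal and rests on the equality $\Phi^{-1}(\CF(A)_{\alpha})=\CF(R)_{\alpha}$, which is why the two inductive statements must be carried along in tandem rather than separately.
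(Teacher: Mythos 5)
Your parts (1) and the $\Phi$-half of (2) match the paper's route (tensor--hom adjunction plus co-Yoneda on representables). The problems are in the $\Theta$-half of (2) and in (3).

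\textbf{The gap in (2).} You claim that the Galois identity ${}_{A}(F_{\lambda}(X),F_{\lambda}(M))\cong\bigoplus_{g\in G}{}_{R}(X,{}^{g}M)$ reduces the end defining $\Theta({}_{R}(*,M))$ to ``a standard Yoneda computation.'' That identity only evaluates Hom-spaces whose \emph{target} is of the first kind. For a general $N\in\mod(A)$ one has ${}_{A}(F_{\lambda}(*),N)\cong{}_{R}(*,F_{\bullet}(N))$ with $F_{\bullet}(N)\in\Mod(R)$ typically infinite dimensional, so this functor is not representable on $\CR=\mod(R)$ and the Yoneda lemma does not apply to compute ${}_{\CR}({}_{R}(*,F_{\bullet}(N)),T)$. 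Your argument is in effect the dense-case argument (the paper isolates it separately in Section 4.2 and Remark \ref{r3}); in general $F_{\lambda}$ is not dense. The paper bridges exactly this point with Lemma \ref{l1} and Proposition \ref{p1}: one must show that a natural transformation ${}_{R}(*,F_{\bullet}(N))\ra T$ vanishes iff the corresponding $\alpha:F_{\bullet}(N)\ra N'$ factors through the presenting map $f$, and proving that factorization for a module of infinite support requires the support/extension-by-zeros analysis of \ref{l1}. Without this, neither $\Theta(\CF(R))\subseteq\CF(A)$ nor $\Phi|_{\CF(R)}\cong\Theta|_{\CF(R)}$ (hence neither exactness) is established. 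This is the main technical content of Theorem \ref{t2} and your proposal skips it.

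\textbf{The gap in (3).} Your transfinite induction carries the forward inclusion $\Phi(\CF(R)_{\alpha})\subseteq\CF(A)_{\alpha}$ in tandem so that $\Phi$ descends to the Serre quotients. That inclusion does \emph{not} follow from exactness and faithfulness: an exact faithful functor can send a simple object to an object of infinite length modulo $\CF(A)_{\alpha}$, so the induced functor $\ov{\Phi}$ on quotients need not exist. In the paper this forward direction is Theorem \ref{t9}(1) and is proved only under much stronger hypotheses (locally support-finite, intervally finite, $G$ torsion-free); at the generality of Theorem \ref{t3} it is not available. The correct argument (Lemma \ref{0l1}, following Krause) proves only the reflection statement $\Phi(T)\in\CF(A)_{\alpha}\Rightarrow T\in\CF(R)_{\alpha}$ by induction on $\alpha$, using that a strictly increasing chain in $\CF(R)\slash\CF(R)_{\alpha}$ maps to a strictly increasing chain in $\CF(A)\slash\CF(A)_{\alpha}$ (the successive quotients are nonzero by faithfulness and lie outside $\CF(A)_{\alpha}$ by the inductive hypothesis); no functor between the quotient categories is needed. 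Your conclusion is reachable, but not by the route you describe.
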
 The assertion of $(1)$ is a consequence of more general Theorem \ref{t1}, already proved in \cite[Proposition 6.1]{Bu}. As we show in \ref{r1.1}, the classical covering functors $F_{\bullet}$, $F_{\lambda}$ and $F_{\rho}$ fall into this scheme. The assertion of $(2)$ is a core result, partially based on technical facts proved in Lemma \ref{l1} and Proposition \ref{p1}. Furthermore, $(2)$ directly implies $(3)$, stating that Galois covering functors do not increase the Krull-Gabriel dimension. Observe that $\Phi$ and $\Theta$ behave as classical push-down functors $F_{\lambda},F_{\rho}:\MOD(R)\ra\MOD(A)$, as they also restrict to categories of finite dimensional modules and they coincide on $\mod(R)$.\footnote{This property does not hold in general, but it holds for the Galois coverings.} 

In Section 4 we study the restricted functor $\Phi:\CF(R)\ra\CF(A)$. We also consider a special case when the push-down functor $F_{\lambda}:\mod(R)\ra\mod(A)$ is dense. At the beginning we show that the action of $G$ on $\mod(R)$ (see Section 2.2) induces an action of $G$ on $\MOD(\CR)$. Proposition \ref{p2} gives the main properties of this action. In Proposition \ref{p3} we recall that the category $\CF(R)$ of finitely presented functors is an abelian Krull-Schmidt $K$-category. The main results of the section are as follows.

\begin{thm}\label{00t2} Assume that $F:R\ra A$ is a Galois $G$-covering functor. The following assertions hold.
\begin{enumerate}[\rm(1)]
  \item \textnormal{\ref{t4}} If the group $G$ is torsion-free, then $\Phi:\CF(R)\ra\CF(A)$ is a Galois $G$-precovering of functor categories.
  \item \textnormal{\ref{t5}} There are natural equivalences $$\Psi(\Phi(\cdot))\cong\bigoplus_{g\in G}g(\cdot)\textnormal{ and }\Psi(\Theta(\cdot))\cong\prod_{g\in G}g(\cdot).$$
  \item \textnormal{\ref{c2}, \ref{t6}} Assume that the push-down functor $F_{\lambda}:\mod(R)\ra\mod(A)$ is dense. Then the functor $\Phi:\MOD(\CR)\ra\MOD(\CA)$ is a subfunctor of $\Theta:\MOD(\CR)\ra\MOD(\CA)$. Moreover, both functors $\Phi,\Theta$ have descriptions similar to the case of classical covering functors $F_{\lambda},F_{\rho}:\MOD(R)\ra\MOD(A)$, respectively.
\end{enumerate}
\end{thm}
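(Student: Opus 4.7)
The strategy is to derive all three assertions from the explicit formulas for $\Phi$ and $\Theta$ obtained in Theorem \ref{00t1}(1), combined with the fundamental Galois covering identity
\begin{equation*}
{}_{A}\bigl(F_{\lambda}(M),F_{\lambda}(N)\bigr)\;\cong\;\bigoplus_{g\in G}\,{}_{R}\bigl(M,g(N)\bigr),
\end{equation*}
valid for $M,N\in\mod(R)$, together with the Yoneda lemma for the small $K$-categories $\CR$ and $\CA$. I would handle the assertions in the order (2), (3), (1), since (2) sets up the computations needed for the remaining parts.

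For (2), I would compute $\Psi\Phi(X)$ pointwise: for $N\in\CR$,
\begin{equation*}
\Psi\Phi(X)(N)=\Phi(X)\bigl(F_{\lambda}(N)\bigr)=X\otimes_{\CR}{}_{A}\bigl(F_{\lambda}(N),F_{\lambda}(*)\bigr).
\end{equation*}
Substituting the Galois covering identity turns the right-hand side into $X\otimes_{\CR}\bigoplus_{g}{}_{R}(N,g(*))$, and a tensor-Yoneda computation identifies this with $\bigoplus_{g}X(g(N))=\bigoplus_{g}(g(X))(N)$, where the final identification uses the explicit description of the $G$-action on $\MOD(\CR)$ from Proposition~\ref{p2}. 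Naturality in $N$ and in $X$ follows from functoriality of each step. The argument for $\Psi\Theta(X)\cong\prod_{g}g(X)$ is formally dual: substitute the Galois identity into $\Theta(X)(F_{\lambda}(N))={}_{\CR}({}_{A}(F_{\lambda}(*),F_{\lambda}(N)),X)$, commute the contravariant Hom with the coproduct to turn it into a product, and apply Yoneda. The subtle point here is organising the $G$-indexed identifications consistently with the convention for the action of $G$ on $\MOD(\CR)$; I would fix this convention once using Proposition~\ref{p2} and use it uniformly.

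For (3), assume $F_{\lambda}$ is dense. Then every $M\in\CA$ is isomorphic to some $F_{\lambda}(N)$, so the same calculation as in (2) yields
\begin{equation*}
\Phi(X)(M)\cong\bigoplus_{g\in G}\,g(X)(N),\qquad \Theta(X)(M)\cong\prod_{g\in G}\,g(X)(N),
\end{equation*}
natural in $M$. The canonical inclusion $\bigoplus_{g}\hookrightarrow\prod_{g}$ defines the desired natural monomorphism $\Phi\hookrightarrow\Theta$. These formulas are exactly the natural analogues of the classical push-down descriptions $F_{\lambda}$ and $F_{\rho}$ at the level of $\MOD(R)$, which establishes the second half of (3). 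The only thing to check carefully is that the inclusion is indeed natural in all arguments and that it descends to subfunctors when restricted to $\CF(R)$.

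For (1), I would combine (2) with the adjunction in Theorem \ref{00t1}(1) to verify the defining Hom-formula of a Galois $G$-precovering (Definition \ref{0d1}). Concretely, for $X\in\CF(R)$ and $Y\in\MOD(\CR)$, adjunction gives
\begin{equation*}
{}_{\CA}\bigl(\Phi(X),\Phi(Y)\bigr)\;\cong\;{}_{\CR}\bigl(X,\Psi\Phi(Y)\bigr)\;\cong\;{}_{\CR}\Bigl(X,\,\bigoplus_{g\in G}g(Y)\Bigr).
\end{equation*}
Since $X\in\CF(R)$ is finitely presented, ${}_{\CR}(X,-)$ commutes with arbitrary direct sums (this is the standard compactness of finitely presented objects, cf.\ Proposition~\ref{p3}), so the right-hand side is canonically isomorphic to $\bigoplus_{g\in G}{}_{\CR}(X,g(Y))$, which is precisely the Hom-formula for a Galois $G$-precovering. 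The $G$-invariance $\Phi\circ g\cong\Phi$ follows either from the explicit formula for $\Phi$ together with the identity $F_{\lambda}\circ g\cong F_{\lambda}$, or by uniqueness of left adjoints after observing that $\Psi$ is $G$-invariant. The role of $G$ being torsion-free is to ensure that the action of $G$ on $\CR$ (and hence on $\MOD(\CR)$ by Proposition~\ref{p2}) is free on objects, which is what allows the Hom-formula above to be interpreted as a genuine Galois precovering rather than a weaker variant with stabilisers. I expect this last verification, matching our computed data against Definition~\ref{0d1}, to be the most delicate step, mainly because one must simultaneously track the $G$-action on $\MOD(\CR)$, the compactness needed to exchange Hom with $\bigoplus$, and the compatibility of the counit of the $(\Phi,\Psi)$-adjunction with the equivalences of (2).
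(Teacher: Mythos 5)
Your treatment of assertions (2) and (3) follows essentially the same route as the paper: compute $\Psi\Phi$ and $\Psi\Theta$ pointwise, insert the Galois identity ${}_{A}(F_{\lambda}(M),F_{\lambda}(N))\cong\bigoplus_{g}{}_{R}(M,{}^{g}N)$ (whose direct-sum form already encodes the needed local finiteness), and finish with the (co-)Yoneda lemma; the subfunctor statement and the push-down-style formulas in (3) then drop out of the inclusion $\bigoplus\hookrightarrow\prod$, exactly as in Corollary \ref{c2} and Theorem \ref{t6}. Your justification of the Hom-formula in (1) via compactness of finitely presented objects is a mild (and valid) variant of the paper's argument, which instead uses finite-dimensionality of Hom-spaces in $\CF(A)$ (Proposition \ref{p3}(1)) to see that only finitely many summands survive.

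There is, however, a genuine gap in (1). A Galois $G$-precovering in the sense of Definition \ref{0d1} requires four conditions, and you only address two of them: the $G$-invariance $\Phi(gT)\cong\Phi(T)$ and the Hom-formula. You do not verify condition (2) of the definition (if $T_{1},T_{2}$ are indecomposable and $\Phi(T_{1})\cong\Phi(T_{2})$ then $T_{1}\cong gT_{2}$ for some $g$) nor condition (3) (that $\Phi$ preserves indecomposability), and these are precisely the points where the paper has to work: both rest on $\CF(R)$ being Krull--Schmidt (Proposition \ref{p3}), on the formula $\Psi\Phi(T)\cong\bigoplus_{g}gT$, and — for indecomposability — on the fact that $G$ acts \emph{freely on the indecomposables of} $\CF(R)$ (Proposition \ref{p2}(3)), which is where torsion-freeness actually enters: one decomposes $\Phi(T)\cong U\oplus V$, writes $\Psi(U)\cong\bigoplus_{h\in H}hT$ for some $H\subseteq G$, and uses $G$-invariance of $\Psi(U)$ plus freeness of the action to force $H=G$. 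Your stated reason for the torsion-free hypothesis ("to ensure that the action of $G$ on $\CR$ is free on objects") is off the mark: $G$ acts freely on $\ob(R)$ for \emph{any} Galois covering; torsion-freeness is needed to upgrade this to a free action on isomorphism classes of indecomposable functors, without which neither the hypothesis of Definition \ref{0d1} nor the indecomposability-preservation argument goes through. To repair the proof you would need to add Propositions \ref{p2} and \ref{p3} and the Krull--Schmidt argument of Theorem \ref{t4}(2)--(3).
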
 The assertion of $(1)$ is a significant generalization of \cite[Theorem 5.5]{P4} where we assume that the group $G$ is torion-free and $R$ is locally support-finite. Recall that in this case the push-down functor $F_{\lambda}$ is dense which greatly simplifies considerations. The assertion of $(2)$ is an analog of equivalences $$(F_{\bullet}\circ F_{\lambda})\cong\bigoplus_{g\in G}(-)^{g}\textnormal{ and }(F_{\bullet}\circ F_{\rho})\cong\prod_{g\in G}(-)^{g}$$ for the classical covering functors. These properties follow from the fact that any Galois covering is surjective on objects, see Section 2.2 for the definition. The classical covering functors $F_{\bullet}=(-)\circ F^{\op}:\MOD(A)\ra\MOD(R)$ and $F_{\lambda},F_{\rho}:\MOD(R)\ra\MOD(A)$ are described in detail in Section 2.2. In turn, \ref{c2} and \ref{t6} describe in detail the functors $\Phi,\Theta:\MOD(\CR)\ra\MOD(\CA)$ when $F_{\lambda}$ is dense. Analogies between $\Phi,\Theta$ and $F_{\lambda},F_{\rho}$ are easily seen in this case. Main properties of $\Phi$ and $\Theta$, for a dense $F_{\lambda}$, are summarized in Theorem \ref{t6}. Remark \ref{r3} discusses equivalent definitions of $\Phi:\CF(R)\ra\CF(A)$ in the dense case, mainly based on \cite{P4}. In \ref{r4} we consider possible generalizations of some results of the section. Finally, in Section 4.3 we give a comprehensive survey of our previous results from \cite{P4,P6}. In our opinion such a reminder is both convenient and natural, because the case when $R$ is locally support-finite $G$ is torsion-free is the main one where $F_{\lambda}$ is a dense functor.  

Section 5 begins with introducing in Definition \ref{d1} the functors of the first kind and the functors of the second kind, following terminology of Dowbor and Skowro\'nski from \cite{DoSk}, see also \cite{DoLeSk}. Functors of the first kind are indecomposable functors in $\CF(R)$, lying in the image of the functor $\Phi:\CF(R)\ra\CF(A)$. Functors of the second kind are the remaining indecomposable functors. The rest of the section is devoted to motivate Definition \ref{d1}. Indeed, we give in \ref{e1} an example of a Galois $G$-covering functor $F:R\ra A$ such that the functor $\Phi$ is not dense. In this way we confirm our conjecture formulated in \cite[Remark 5.6]{P4}. In the example, $R$ is a locally representation-finite simply connected \cite{AsSk3} locally bounded $K$-category and $G$ is an admissible torsion-free group of $K$-linear automorphisms of $R$. We show that in this special situation $\Phi$ may be studied in terms of classical covering theory. Indeed, first we show in Proposition \ref{p4} that locally representation-finiteness of $R$ implies that the category $\ind(R)$ is locally bounded. Then we recall in Theorem \ref{t10} a classical fact that in this case $A\cong R\slash G$ is representation-finite and $F_{\lambda}:\ind(R)\ra\ind(A)$ is a Galois $G$-covering of locally bounded $K$-categories, inducing a Galois $G$-covering on the level of Auslander-Reiten quivers of $R$ and $A$. We conclude in Proposition \ref{p5} that $\Phi\cong(F_\lambda)_{\lambda}$ in this case. Furthermore, the simple connectedness of $R$ implies that for any $C\in\{R,A\}$ we have $\ind(C)\cong K(\Gamma_{C})$ where $K(\Gamma_{C})$ denotes the \textit{mesh-category} \cite{Rie} of the Auslander-Reiten quiver $\Gamma_{C}$ of $C$. Naturally, these equivalences are given by $\phi_{C}:\ind(C)\ra K(\Gamma_{C})$ such that $\phi_{C}(X)=[X]$, for any $X\in\ind(C)$, where $[X]$ denotes the vertex in $\Gamma_{C}$ corresponding to the module $X$. The main result of the section is presented below.

\begin{thm}\textnormal{\ref{t11}}\label{00t3} Assume that $R$ is a locally representation-finite simply connected locally bounded $K$-category, $G$ an admissible torsion-free group of $K$-linear automorphisms of $R$ and $F:R\ra A\cong R\slash G$ the Galois covering. There exists a Galois covering $F_{\lambda}^{\Gamma}:K(\Gamma_{R})\ra K(\Gamma_{A})$ such that the following diagram $$\xymatrix{\mod(K(\Gamma_{R}))\ar[rr]^{(F_{\lambda}^{\Gamma})_{\lambda}}\ar[d]^{\widetilde{\phi}_{R}}&&\mod(K(\Gamma_{A}))\ar[d]^{\widetilde{\phi}_{A}}\\\mod(\ind(R))\ar[rr]^{(F_{\lambda})_\lambda}\ar[d]^{\eta_R}&&\mod(\ind(A))\ar[d]^{\eta_{A}}\\\CF(R)\ar[rr]^{\Phi_{F}}&& \CF(A),}$$ where $\widetilde{\phi}_{R}=(-)\circ\phi_{R}$ and $\widetilde{\phi}_{A}=(-)\circ\phi_{A}$, is a commutative diagram whose columns are equivalences.
\end{thm}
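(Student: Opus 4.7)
The plan is to build $F_\lambda^\Gamma$ by transport along the equivalences $\phi_R, \phi_A$, so that the top square commutes essentially by construction, and then prove that the bottom square commutes by a Yoneda reduction using the explicit tensor-product description of $\Phi_F$ from Theorem \ref{00t1}.

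First I would fix quasi-inverses $\phi_R^{-1}:K(\Gamma_R)\ra\ind(R)$ and $\phi_A^{-1}:K(\Gamma_A)\ra\ind(A)$ and define
\[ F_\lambda^\Gamma := \phi_A\circ F_\lambda\circ\phi_R^{-1}:K(\Gamma_R)\ra K(\Gamma_A). \]
Since $F_\lambda:\ind(R)\ra\ind(A)$ is already known from Theorem \ref{t10} to be a Galois $G$-covering of locally bounded $K$-categories, and conjugation by equivalences preserves this property (the $G$-action transports along $\phi_R$, admissibility and surjectivity on objects are invariant), $F_\lambda^\Gamma$ is a Galois $G$-covering. The commutativity of the top square is now essentially formal: $(F_\lambda^\Gamma)_\lambda$ and $(F_\lambda)_\lambda$ are the push-down functors along $F_\lambda^\Gamma$ and $F_\lambda$ respectively, and $\widetilde{\phi}_R=(-)\circ\phi_R$, $\widetilde{\phi}_A=(-)\circ\phi_A$ are the induced equivalences on module categories. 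Chasing a module $M\in\mod(K(\Gamma_R))$ through both paths and using the pull-push adjunction shows $\widetilde{\phi}_A\circ (F_\lambda^\Gamma)_\lambda\cong (F_\lambda)_\lambda\circ\widetilde{\phi}_R$, and that the columns are equivalences is immediate from the fact that $\phi_R,\phi_A$ are.

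For the bottom square, the key point is the Auslander-type equivalence $\eta_C:\mod(\ind(C))\ra\CF(C)$ for $C\in\{R,A\}$, which exists because $\ind(R)$ is locally bounded by Proposition \ref{p4} and $\ind(A)$ is representation-finite by Theorem \ref{t10}. Concretely, $\eta_C$ is the left Kan extension along the inclusion $\iota_C:\ind(C)\hookrightarrow\mod(C)$; its quasi-inverse is restriction. Both $(F_\lambda)_\lambda:\mod(\ind(R))\ra\mod(\ind(A))$ and $\Phi_F:\CF(R)\ra\CF(A)$ are right exact (the former is a left adjoint, the latter by Theorem \ref{00t1}(2)), so to establish a natural isomorphism $\eta_A\circ(F_\lambda)_\lambda\cong\Phi_F\circ\eta_R$ it suffices to verify it on the representable generators ${}_{\ind(R)}(-,X)$ for $X\in\ind(R)$ and check compatibility with morphisms. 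On such a representable,
\[ \eta_R({}_{\ind(R)}(-,X))\cong{}_R(-,X), \]
and the tensor-product definition $\Phi_F= ?\otimes_{\CR}[{}_A(-,F_\lambda(*))]$ from Theorem \ref{00t1}(1), combined with the standard computation ${}_R(-,X)\otimes_{\CR}[{}_A(-,F_\lambda(*))]\cong{}_A(-,F_\lambda(X))$, gives $\Phi_F(\eta_R({}_{\ind(R)}(-,X)))\cong{}_A(-,F_\lambda(X))$. On the other side, push-down of a representable along $F_\lambda:\ind(R)\ra\ind(A)$ is again representable, so $(F_\lambda)_\lambda({}_{\ind(R)}(-,X))\cong{}_{\ind(A)}(-,F_\lambda(X))$, and $\eta_A$ sends this to ${}_A(-,F_\lambda(X))$. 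The two sides agree, and naturality in $X$ is straightforward.

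The main obstacle is the bottom-square commutativity, specifically the interplay between the tensor-product formula for $\Phi_F$ over $\CR=\mod(R)$ and the push-down $(F_\lambda)_\lambda$ over the subcategory $\ind(R)$. The delicate point is that the tensor product defining $\Phi_F$ is taken over all of $\mod(R)$, not over $\ind(R)$, so one needs to verify that restricting to $\ind(R)$ — which is where the Kan extension $\eta_R$ produces its values — is compatible with tensoring, i.e.\ that the canonical map from the $\ind(R)$-tensor product to the $\mod(R)$-tensor product is an isomorphism on the image of $\eta_R$. This compatibility rests on the fact that every finitely presented $R$-module admits a projective presentation in $\add(\ind(R))$, together with exactness of $\Phi_F$ from Theorem \ref{00t1}(2); once this reduction is in place, naturality of the isomorphism in both variables gives the required commuting square, and the theorem follows.
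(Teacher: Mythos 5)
Your proposal is correct and follows essentially the same route as the paper: the top square is obtained by making $F_{\lambda}^{\Gamma}$ compatible with the equivalences $\phi_{R},\phi_{A}$ (the paper builds it from the induced Galois covering $\Gamma_{R}\ra\Gamma_{A}$ of Theorem \ref{t10}, which amounts to your conjugation formula), and the bottom square is the paper's Proposition \ref{p5}(2), proved exactly by your reduction to representables/projective presentations using exactness and the fact that push-down and $\Phi_{F}$ both send $\ind(R)(-,X)$, resp.\ ${}_{R}(-,X)$, to the corresponding representable at $F_{\lambda}(X)$. The "delicate point" you flag about the tensor product over $\mod(R)$ versus $\ind(R)$ dissolves once one notes that $\eta_{R}$ carries $\ind(R)(-,X)$ to the full hom-functor ${}_{R}(-,X)$, after which the co-Yoneda computation from Theorem \ref{t2} applies verbatim.
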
 The above theorem serves as a theoretical basis for Example \ref{e1}. Some further examples are given in \ref{e2} and \ref{e3}. The section is concluded with Remark \ref{r7} where we briefly discuss cases where $R$ is not locally representation-finite. Moreover, we consider in \ref{r7} a candidate for a generalization of the definition of locally support-finite locally bounded $K$-category to the level of finitely presented functors. 

In Section 6 we show some special applications of Theorem \ref{00t1} (3) (Theorem \ref{t3}), stating that Galois coverings do not increase the Krull-Gabriel dimension. Observe that the inequality $\KG(R)\leq\KG(A)$ implies that $\KG(R)$ is finite if $\KG(A)$ is finite, and $\KG(A)$ is undefined if so if $\KG(R)$. We use these facts in calculating the Krull-Gabriel dimension or determining the representation type for some classes of tame algebras with strongly simply connected Galois coverings \cite{Sk3} and the weighted surface algebras \cite{ErSk1}. We recall in the section some necessary facts concerning strongly simply connected algebras \cite{Sk4}, algebras with strongly simply connected Galois coverings \cite{NoSk} and weighted surface algebras. In particular, we define some special two families $D(\lambda)^{(1)}$ and $D(\lambda)^{(2)}$ \cite{ErSk2} of weighted surface algebras. The main results of the section are as follows.

\begin{thm}\label{00t4} The following assertions hold.
\begin{enumerate}[\rm(1)]
  \item \textnormal{\ref{t13}} Assume that $A$ is a tame algebra having a strongly simply connected Galois covering. If $\KG(A)$ is finite, then $A$ is of domestic type. 
  \item \textnormal{\ref{t14}} We have $\KG(D(\lambda)^{(1)})=\KG(D(\lambda)^{(2)})=\infty$.
\end{enumerate}   
\end{thm}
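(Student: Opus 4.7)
The strategy is to chain the inequality $\KG(R)\leq\KG(A)$ from Theorem \ref{t3} with the known structure theory of strongly simply connected categories, and then propagate the domestic property downward along the covering. Let $F:R\ra A\cong R\slash G$ be a strongly simply connected Galois covering with $A$ tame and $\KG(A)$ finite. First I apply Theorem \ref{t3} to obtain $\KG(R)\leq\KG(A)<\infty$, so $R$ is a strongly simply connected locally bounded $K$-category of finite Krull-Gabriel dimension. Next I invoke the trichotomy of (locally) strongly simply connected categories, combined with the results cited in the introduction: strongly simply connected algebras of nonpolynomial growth have undefined Krull-Gabriel dimension by \cite{Sk2}, pg-critical and related strongly simply connected Galois coverings have undefined Krull-Gabriel dimension by \cite{KaPa1,KaPa2,KaPa3} and \cite{NoSk}, and by Wenderlich's theorem \cite{We} a strongly simply connected algebra has finite Krull-Gabriel dimension if and only if it is of domestic type. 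Hence finiteness of $\KG(R)$ forces $R$ to be (locally) of domestic representation type.

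The last step is to transfer this conclusion from $R$ to $A$. Since $A$ is tame, the classical covering theory of \cite{DoSk} and \cite{DoLeSk} applies: the push-down $F_{\lambda}:\mod(R)\ra\mod(A)$ sends indecomposables to indecomposables and is essentially surjective on indecomposables modulo the $G$-action, and every one-parameter family of indecomposable $A$-modules lifts to a $G$-invariant family of indecomposable $R$-modules. A locally domestic $R$ has only finitely many (up to shift by $G$) one-parameter families, so the same holds for $A$, which is therefore of domestic representation type.

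\textbf{Plan for (2).} Here I apply the contrapositive of Theorem \ref{t3}: to force $\KG(D(\lambda)^{(i)})=\infty$ for $i\in\{1,2\}$, it suffices to exhibit a Galois covering $F_{i}:R_{i}\ra D(\lambda)^{(i)}$ with $\KG(R_{i})=\infty$. Following the construction of the families $D(\lambda)^{(1)}$ and $D(\lambda)^{(2)}$ in \cite{ErSk2}, the natural candidates for $R_{i}$ are the associated repetitive-type or universal covering categories of these weighted surface algebras. The plan is to describe $R_{i}$ explicitly, verify that the induced functor $F_{i}$ is Galois for a suitable admissible $G$, and then identify inside $R_{i}$ either a tubular convex subcategory (so $\KG(R_{i})=\infty$ by \cite{Geigle1985,Ge2}), a pg-critical convex subcategory \cite{NoSk}, or a strongly simply connected convex subcategory of nonpolynomial growth (so $\KG(R_{i})=\infty$ by \cite{Sk2,KaPa1,KaPa2,KaPa3}). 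Once $\KG(R_{i})=\infty$ is established, Theorem \ref{t3} yields $\KG(D(\lambda)^{(i)})=\infty$.

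\textbf{Main obstacle.} Part (1) is essentially bookkeeping once Theorem \ref{t3} and the Wenderlich--Skowroński structure theory are in hand; the only delicate point is passing the Krull-Gabriel dichotomy from strongly simply connected algebras to strongly simply connected locally bounded categories, which requires invoking the local versions of the cited results. The substantive difficulty sits in part (2): the precise identification of Galois coverings of the weighted surface algebras $D(\lambda)^{(1)}$ and $D(\lambda)^{(2)}$ and the detection inside them of a convex subcategory falling into one of the known $\KG=\infty$ regimes depend on a rather intricate analysis of the combinatorics of weighted surface algebras from \cite{ErSk1,ErSk2}, and this is where most of the technical work must be carried out.
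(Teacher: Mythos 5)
Your reduction in part (1) to the statement that $R$ is ``locally domestic'' is fine as far as it goes (every finite convex subcategory $B$ of $R$ satisfies $\KG(B)\leq\KG(R)\leq\KG(A)<\infty$ by Lemma \ref{0l2} and Theorem \ref{t3}, so Wenderlich's theorem makes $B$ domestic, ruling out hypercritical, pg-critical and tubular convex subcategories). The gap is in the transfer step back to $A$. The push-down $F_{\lambda}:\mod(R)\ra\mod(A)$ is \emph{not} essentially surjective on indecomposables modulo $G$ for this class of coverings (the paper stresses this, citing \cite[Corollary 2.5]{Sk3}): there are $A$-modules of the second kind, and by Theorem \ref{t15} the one-parameter families they form come from $G$-periodic \emph{lines} in $R$, i.e.\ from weakly $G$-periodic indecomposable $R$-modules of infinite dimension, not from one-parameter families of finite-dimensional indecomposable $R$-modules. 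So ``every one-parameter family of indecomposable $A$-modules lifts to a $G$-invariant family of indecomposable $R$-modules'' is false, and a locally domestic (even locally representation-finite) $R$ can give a non-domestic $A$ when there are infinitely many $G$-orbits of $G$-periodic lines. This is exactly why Skowro\'nski's criterion (Theorem \ref{t12}(2)) carries the finiteness of the set of line orbits as a separate hypothesis. The paper closes this hole with an argument your plan does not contain: if the number of $G$-orbits of $G$-periodic lines were infinite, then by \cite[Lemma 6.1(3), Theorem 6.2]{KaPa2} the algebra $A$ would have a non-domestic string factor algebra $C$, whence $\KG(C)=\infty$ by \cite{Sch2} and $\KG(A)=\infty$ by Lemma \ref{0l2}(2), contradicting the hypothesis. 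Only after both conditions of Theorem \ref{t12}(2) are verified does one conclude that $A$ is domestic.

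In part (2) your plan deviates from the paper at the very first move, and the deviation matters. You propose to exhibit a Galois covering of $D(\lambda)^{(i)}$ itself; the paper instead first passes to the quotient algebra $A^{(i)}$ of $D(\lambda)^{(i)}$ obtained by replacing each commutativity relation $\omega-v$ by the two zero relations $\omega=0$, $v=0$, and only then (following \cite[Lemma 6.9]{ErSk2}) produces a Galois covering $R\ra A^{(i)}$ with a pg-critical finite convex subcategory $B\preceq R$. The chain is $\infty=\KG(B)\leq\KG(R)\leq\KG(A^{(i)})\leq\KG(D(\lambda)^{(i)})$, the middle inequality from Theorem \ref{t3}(1) and the last from Lemma \ref{0l2}(2) applied to the surjection $D(\lambda)^{(i)}\ra A^{(i)}$. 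This quotient step is not cosmetic: the relations of $D(\lambda)^{(i)}$ are commutativity relations on a $2$-regular quiver with loops, and these algebras are symmetric periodic, so the ``universal covering category'' you gesture at is not available in a form where one can locate a pg-critical or tubular convex subcategory. Since you also defer the identification of the bad convex subcategory to unspecified ``technical work,'' part (2) of your proposal is a plan missing its essential reduction rather than a proof.
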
 We argue in Remark \ref{r9} that the converse of the assertion $(1)$ holds and so the conjecture of Prest is valid for this class of algebras. The second assertion of the above theorem gives a flavor of our main results from \cite{EJ-PP} where we prove, in particular, that all weighted surface algebras have Krull-Gabriel dimension undefined. We stress that in these theorems we cannot apply our previous results from \cite{P4,P6}, see Theorem \ref{t8}, because in general we do not consider torsion-free groups or dense push-down functors. 

The final Section 7 of the paper contains our general thoughts on the conjecture of M. Prest \ref{00c1}, in the context of our main results. In a sense, it is a continuation of Section 6.1. Indeed, we believe that it is reasonable to restrict the generality of this conjecture to the case considered in Theorem \ref{t15} which is used indirectly in Section 6.1. This theorem is in turn based on the renowned result of \cite[Theorem 3.6]{DoSk}. We formulate in \ref{pro1} and \ref{pro2} some related open problems and share our ideas in \ref{dis1}. We finish with Remark \ref{r10} which is particularly interesting in our opinion.  

We are grateful to P. Dowbor and S. Kasjan for discussions about problems considered in the paper. Results of this paper have been presented on several conferences, including ARTA VII, Toru\'n, Poland, 2022; ARTA VIII, Kingston, Canada, 2023; Functor and Tensor Categories, Models And Systems, Santiago de Compostela, Spain, 2024; ICRA 21, Shanghai, China, 2024.

\section*{Acknowledgements} Andrzej Skowro\'nski gave us in October 2015 the problem of describing the Krull-Gabriel dimension of the standard self-injective algebras of polynomial growth. This was the beginning of our serious interest in the theory of Krull-Gabriel dimension and related questions about functor categories. These interests resulted in the papers \cite{P4,P6}, \cite{J-PP1,EJ-PP}, as well as the present one. We view the results of this paper as a (partial) culmination of that work, which would have not been obtained without Andrzej's encouragement and published work.       

Andrzej Skowro\'nski passed away on 22.10.2020. We dedicate this paper to his memory.

\section{Background}

This section is devoted to recall some basic information related to categories of modules over locally bounded $K$-categories. This includes Galois coverings of locally bounded $K$-categories and associated functors between module categories. We also give brief introduction to Krull-Gabriel dimension and functor categories. The notation and results we introduce are freely used in the paper.  

\subsection{Modules over locally bounded $K$-categories}

Assume that $R$ is a $K$-category and let $\ob(R)$ be the class of all objects of $R$. If $x,y\in\ob(R)$, then $R(x,y)$ denotes the space of all morphisms from $x$ to $y$. Following \cite{Ga,BoGa}, we say that a $K$-category $R$ is \textit{locally bounded} if and only if 

\begin{itemize}
	\item distinct objects of $R$ are not isomorphic,
	\item the algebra $R(x,x)$ is local, for any $x\in\ob(R)$,
	\item $\sum_{y\in\ob(R)}\dim_{K}R(x,y)<\infty$, $\sum_{y\in\ob(R)}\dim_{K}R(y,x)<\infty$, for any $x\in\ob(R)$.
\end{itemize}

The main examples of locally bounded $K$-categories are given by the \textit{bound quiver $K$-categories}. We recall necessary definitions below.

Assume that $Q=(Q_{0},Q_{1})$ is a \emph{quiver} where $Q_{0}$ is the set of vertices and $Q_{1}$ the set of arrows. Then $Q$ is \textit{finite} if both sets $Q_{0}$ and $Q_{1}$ are finite. The quiver $Q$ is \textit{locally finite} if the number of arrows in $Q_{1}$ starting or ending in any vertex is finite. If $\alpha\in Q_{1}$ is an arrow, then $s(\alpha)$ denotes its starting vertex and $t(\alpha)$ its terminal vertex. Assume that $x,y\in Q_{0}$. By a \textit{path} from vertex $x$ to vertex $y$ in $Q$ we mean a sequence $c_{1}\dots c_{n}$ in $Q_{1}$ such that $s(c_{1})=x$, $t(c_{n})=y$ and $t(c_{i})=s(c_{i+1})$, for $1\leq i<n$.\footnote{We compose arrows in the opposite direction to functions, consistently with \cite{AsSiSk}.} We associate the \textit{stationary path} $e_{x}$ to each vertex $x\in Q_{0}$ and we set $s(e_{x})=t(e_{x})=x$.

The \textit{path $K$-category} $\und{KQ}$ of a locally finite quiver $Q$ is a $K$-category whose objects are the vertices of $Q$ and the $K$-linear space $\und{KQ}(x,y)$ of morphisms from $x$ to $y$ is generated by all paths from $y$ to $x$. The composition in $\und{KQ}$ is defined by concatenation of paths in $Q$. If $I$ is an \emph{admissible ideal} in $\und{KQ}$ \cite{Pog,BoGa}, then the pair $(Q,I)$ is called the \textit{bound quiver}. The associated quotient $K$-category $\und{KQ}\slash I$ is locally bounded and called the \textit{bound quiver $K$-category}. It is shown in \cite{BoGa} that any locally bounded $K$-category over an algebraically closed field $K$ is isomorphic with some bound quiver $K$-category.


Throughout the paper, we consider both covariant and contravariant functors. Contravariant functors $\CC\ra\CD$ are usually denoted as covariant functors $\CC^{\op}\ra\CD$ where $\CC^{\op}$ is the category opposite to $\CC$. Recall that a covariant functor $\CC^{\op}\ra\CD$ induces a contravariant one $\CC\ra\CD$ in a natural way, and vice versa.

Assume that $R$ is a locally bounded $K$-category. A \textit{right $R$-module} $M$ (or simply an \textit{$R$-module}) is a $K$-linear covariant functor $M:R^{\op}\ra\MOD(K)$. An $R$-module $M$ is \textit{finite dimensional} if and only if $\sum_{x\in\ob(R)}\dim_{K} M(x)<\infty$ and \emph{locally finite dimensional} if and only if $\dim_{K} M(x)<\infty$, for any $x\in\ob(R)$. Assume that $M,N:R^{\op}\ra\MOD(K)$ are $R$-modules. An \textit{$R$-module homomorphism} $f:M\ra N$ is a natural transformation of functors $(f_{x})_{x\in\ob(R)}$ where $f_{x}:M(x)\ra N(x)$ is a vector space homomorphism, for any $x\in\ob(R)$. The space of all homomorphisms from $M$ to $N$ is denoted by $\Hom_{R}(M,N)$. We usually write ${}_{R}(M,N)$ instead of $\Hom_{R}(M,N)$. Analogous notation for hom-spaces is used for any additive categories.

We denote by $\MOD(R)$, $\Mod(R)$ and $\mod(R)$ the categories of all $R$-modules, all locally finite dimensional $R$-modules and all finite dimensional $R$-modules, respectively. The full subcategories of $\Mod(R)$ and $\mod(R)$, formed by representatives of isomorphism classes of all indecomposable $R$-modules, are denoted by $\Ind(R)$ and $\ind(R)$, respectively. As usual, $\Gamma_{R}$ is the \emph{Auslander-Reiten quiver} of the category $\ind(R)$. 

We denote by $R\lMOD$ the category $\MOD(R^{\op})$ of all \textit{left $R$-modules}, that is, $K$-linear covariant functors $M:R\ra\MOD(K)$. Clearly $R\lMod$ and $R\lmod$ are the left counterparts of $\Mod(R)$ and $\mod(R)$, respectively. Recall that the functor $$D=\Hom_{K}(-,K):\mod(R)\ra R\lmod$$ is a duality. If $A$ is another locally bounded $K$-category, then an \emph{$R$-$A$-bimodule} is a $K$-bilinear bifunctor $A^{\op}\times R\ra\MOD(K)$.

Assume that $R=\und{KQ}\slash I$ is a bound quiver $K$-category. It is well known that the category $\MOD(R)$ of $R$-modules is equivalent with the category $\Rep_{K}(Q,I)$ of \emph{$K$-linear representations} of the bound quiver $(Q,I)$. Moreover, if $(Q,I)$ is finite, then there is an equivalence of $\Rep_{K}(Q,I)$ and the category of all modules over the \textit{bound quiver $K$-algebra} $KQ\slash I$, restricting to equivalence of categories of finite dimensional modules, see \cite[III 1.6]{AsSiSk} for details.\footnote{We denote the admissible ideal in the $K$-category by the same letter $I$ as the corresponding admissible ideal in the $K$-algebra. In the case of finite $(Q,I)$, we identify the bound quiver $K$-category $R=\und{KQ}\slash I$ with the bound quiver $K$-algebra $KQ\slash I$.} More generally, any module $M\in\MOD(R)$ can be viewed as a direct sum $\widehat{M}=\bigoplus_{x\in\ob(R)}M(x)$ with a right action of $\widehat{R}=\bigoplus_{x,y\in\ob(R)}R(x,y)$ such that, for any $m\in\widehat{M}$ and $r\in\widehat{R}$, the element $m\cdot r$ is defined as in \cite[III 1.6]{AsSiSk}. In this language, $f:M\ra N$ is a homomorphism of $R$-modules, in the sense of a natural transformation of functors, if and only if $\widehat{f}:\widehat{M}\ra\widehat{N}$ is a homomorphism of $R$-modules in the usual sense, that is $\widehat{f}(m\cdot r)=\widehat{f}(m)\cdot r$. The same works for left $R$-modules and $R$-$A$-bimodules, and so we have the tensor product bifunctor $$-\otimes_{R}-:\MOD(R)\times R\lMOD\ra\MOD(K)$$ with the usual properties. In the paper we use freely all three equivalent descriptions of module categories over locally bounded $K$-categories.    

Assume that $R$ is a locally bounded $K$-category and let $x\in\ob(R)$. Then $P_{x}=R(-,x)$ and $I_{x}=D(R(-,x))$ denote the indecomposable projective and the indecomposable injective (right) $R$-module, associated with the vertex $x$, respectively. Observe that these modules are finite dimensional, because $R$ is locally bounded. 

\subsection{Galois coverings}

Here we recall the notion of a Galois $G$-covering functor \cite{BoGa} and some related concepts and facts. For a general definition of a covering functor the reader is referred to \cite{Ga}.

Assume that $R,A$ are locally bounded $K$-categories, $F:R\ra A$ is a $K$-linear functor and $G$ a group of $K$-linear automorphisms of $R$ acting freely on the objects of $R$ (i.e. $gx=x$ if and only if $g=1$, for any $g\in G$ and $x\in\ob(R)$). Then $F:R\ra A$ is a \textit{Galois G-covering} (or simply a \emph{Galois covering}) if and only if 
\begin{itemize}
	\item the functor $F:R\ra A$ induces isomorphisms $$\bigoplus_{g\in G}R(gx,y)\cong A(F(x),F(y))\cong\bigoplus_{g\in G}R(x,gy)$$ of vector spaces, for any $x,y\in\ob(R)$,
	\item the functor $F:R\ra A$ induces a surjective function $\ob(R)\ra\ob(A)$,
	\item $Fg=F$, for any $g\in G$,
	\item for any $x,y\in\ob(R)$ such that $F(x)=F(y)$ there is $g\in G$ such that $gx=y$. 
\end{itemize}
It is well known that $F:R\ra A$ satisfies the above conditions if and only if $F$ induces an isomorphism $A\cong R\slash G$ where $R\slash G$ is the \textit{orbit category}, see \cite{BoGa}.

Assume that $F:R\ra A\cong R\slash G$ is a Galois covering. Then the \textit{pull-up} functor $F_{\bullet}:\MOD(A)\ra\MOD(R)$ associated with $F$ is the functor $(-)\circ F^{\op}$. The pull-up functor is exact and has the left adjoint $F_{\lambda}:\MOD(R)\ra\MOD(A)$ and the right adjoint $F_{\rho}:\MOD(R)\ra\MOD(A)$ which are called the \textit{push-down} functors. We recall the description of the push-down functors below.

Assume that $M:R^{\op}\ra\MOD(K)$ is an $R$-module. We define the $A$-module $F_{\lambda}(M):A^{\op}\ra\MOD(K)$ in the following way. Assume that $a\in\ob(A)$ and $a=F(x)$, for some $x\in\ob(R)$. Then we have $$F_{\lambda}(M)(a)=\bigoplus_{g\in G}M(gx).$$ Assume that $\alpha\in A(b,a)$ and $a=F(x),b=F(y)$, for some $x,y\in\ob(R)$. Since $F$ induces an isomorphism $$\bigoplus_{g\in G}R(gy,x)\cong A(F(y),F(x)),$$ there are $\alpha_{g}:gy\ra x$, for $g\in G$, such that $\alpha=\sum_{g\in G}F(\alpha_{g})$. Then the homomorphism $$F_{\lambda}(M)(\alpha):F_{\lambda}(M)(a)\ra F_{\lambda}(M)(b)$$ is defined by homomorphisms $M(g\alpha_{g^{-1}h}):M(gx)\ra M(hy)$, for any $g,h\in G$.\footnote{We use the standard matrix notation for homomorphisms between finite direct sums, see Section 3 for details.} Assume that $f:M\ra N$ is an $R$-module homomorphism and $f=(f_{x})_{x\in\ob(R)}$, $f_{x}:M(x)\ra N(x)$. Then $F_{\lambda}(f):F_{\lambda}(M)\ra F_{\lambda}(N)$, $F_{\lambda}(f)=(\hat{f}_{a})_{a\in\ob(A)}$ and $\hat{f}_{a}:F_{\lambda}(M)(a)\ra F_{\lambda}(N)(a)$ is defined by homomorphisms $f_{gx}:M(gx)\ra N(gx)$, for any $g\in G$. For the module $F_{\rho}(M):A^{\op}\ra\MOD(K)$ we have $$F_{\rho}(M)(a)=\prod_{g\in G}M(gx)$$ and the rest of the definition is similar to the case of $F_{\lambda}$. Observe that $F_{\lambda}$ is a subfunctor of $F_{\rho}$ and both functors coincide on the category of finite dimensional $A$-modules.\footnote{General covering functors do not have this property.} Moreover, if an $R$-module $M$ is finite dimensional, then $F_{\lambda}(M)$ is finite dimensional. Hence the functor $F_{\lambda}$ restricts to a functor $\mod(R)\ra\mod(A)$. This functor is also denoted by $F_{\lambda}$. 

Assume that $R$ is a locally bounded $K$-category, $G$ is a group of $K$-linear automorphisms of $R$ acting freely on the objects of $R$ and $g\in G$. Given $R$-module $M$ we denote by ${}^{g}M$ the module $M\circ g^{-1}$. Given $R$-module homomorphism $f:M\ra N$ we denote by ${}^{g}f$ the $R$-module homomorphism ${}^{g}M\ra {}^{g}N$ such that ${}^{g}f_{x}=f_{g^{-1}x}$, for any $x\in\ob(R)$. This defines an action of $G$ on $\MOD(R)$. It is easy to see that the map $f\mapsto {}^{g}f$ defines isomorphism of vector spaces ${}_{R}(M,N)\cong{}_{R}({}^{g}M,{}^{g}N)$.

We say that the group $G$ is \textit{admissible} if and only if $G$ acts freely on the objects of $R$ and there are only finitely many $G$-orbits. In this case the orbit category $R\slash G$ is finite and we often treat it as an algebra. If $G$ is admissible, then we say that $G$ \textit{acts freely on $\ind(R)$} if and only if ${}^{g}M\cong M$ implies that $g=1$, for any $M\in\ind(R)$ and $g\in G$.

The \textit{support} $\supp (M)$ of a module $M\in\MOD(R)$ is the full subcategory of $R$ formed by all objects $x$ in $R$ such that $M(x)\neq 0$. The category $R$ is \textit{locally support-finite} \cite{DoSk} if and only if for any $x\in\ob(R)$ the union of the sets $\supp(M)$, where $M\in\ind(R)$ and $M(x)\neq 0$, is finite.

The main properties of the push-down functor $F_{\lambda}:\mod(R)\ra\mod(A)$ are summarized in the following theorem, based on \cite{Ga,BoGa,MP,DoSk}.

\begin{thm}\label{0t1} Assume that $R$ is a locally bounded $K$-category, $G$ an admissible group of $K$-linear automorphisms of $R$ and $F:R\ra A$ the Galois covering. Then the functor $F_{\lambda}:\mod(R)\ra\mod(A)$ satisfies the following assertions.
\begin{enumerate}[\rm(1)]
 \item 	There are isomorphisms $F_{\lambda}({}^{g}M)\cong F_{\lambda}(M)$ and $F_{\lambda}({}^{g}f)\cong F_{\lambda}(f)$, for any $R$-module $M$, $R$-homomorphism $f$ and $g\in G$.
\item There is an isomorphism $F_{\bullet}(F_{\lambda}(M))\cong\bigoplus_{g\in G}{}^{g}M$, for any $R$-module $M$. Moreover, if $X,Y\in\ind(R)$, then $F_{\lambda}(X)\cong F_{\lambda}(Y)$ implies $Y\cong {}^{g}X$, for some $g\in G$.
    \item If the group $G$ is torsion-free, then $G$ acts freely on $\ind(R)$. If the latter condition holds, then $F_{\lambda}:\mod(R)\ra\mod(A)$ preserves indecomposability.
\item The functor $F_{\lambda}$ induces the following isomorphisms of vector spaces $$\bigoplus_{g\in G}{}_{R}({}^{g}X,Y)\cong{}_{A}(F_{\lambda}(X),F_{\lambda}(Y))\cong\bigoplus_{g\in G}{}_{R}(X,{}^{g}Y),$$ for any $X,Y\in\mod(R)$.
	\item Assume that the group $G$ is torsion-free and $R$ is locally support-finite. The the push-down functor $F_{\lambda}:\mod(R)\ra\mod(A)$ is dense. This means that for any $M\in\mod(A)$ there is $X\in\mod(R)$ such that $F_{\lambda}(X)\cong M$.
\end{enumerate}
\end{thm}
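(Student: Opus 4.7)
The plan is to establish the five assertions in the order they are stated, since each one feeds into the arguments for the next. Assertions (1) and (2) are essentially definitional/computational, (3) relies on (2) and (4), assertion (4) is a direct consequence of the adjoint pair $(F_\lambda, F_\bullet)$ together with (2), and (5) is the genuinely hard density statement of Dowbor--Skowro\'nski.

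For \textbf{(1)} I would unwind the definition of $F_\lambda$. Fix $a=F(x)\in\ob(A)$ and notice that the defining formula $F_\lambda(M)(a)=\bigoplus_{g\in G}M(gx)$ is independent, up to canonical isomorphism, of the representative $x$ chosen in the $G$-orbit lying over $a$: replacing $x$ by $hx$ merely re-indexes the direct sum by $g\mapsto gh^{-1}$. Since ${}^gM(hx)=M(g^{-1}hx)$, the same re-indexing trick yields $F_\lambda({}^gM)\cong F_\lambda(M)$, and the corresponding check on morphisms gives $F_\lambda({}^gf)\cong F_\lambda(f)$. For \textbf{(2)} I would simply compute, for $x\in\ob(R)$,
\[
F_\bullet(F_\lambda(M))(x)=F_\lambda(M)(F(x))=\bigoplus_{g\in G}M(gx)=\bigoplus_{g\in G}{}^{g^{-1}}M(x),
\]
and verify naturality in the morphisms $R(y,x)$, using that $F$ identifies $\bigoplus_gR(gy,x)$ with $A(F(y),F(x))$. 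The second half of (2) then follows from Krull--Schmidt: if $F_\lambda X\cong F_\lambda Y$ with $X,Y\in\ind(R)$, applying $F_\bullet$ gives $\bigoplus_g{}^gX\cong\bigoplus_g{}^gY$ in $\mod(R)$ (both sides being locally finite dimensional direct sums of indecomposables), so $Y\cong{}^gX$ for some $g$.

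For \textbf{(3)} the free action claim uses that every $X\in\ind(R)$ has \emph{finite} support, so its stabilizer in $G$ acts by permutations on a finite set of objects, hence is a finite subgroup, and therefore trivial when $G$ is torsion-free. Assuming $G$ acts freely on $\ind(R)$, indecomposability of $F_\lambda(X)$ follows by computing its endomorphism ring via (4):
\[
\End_A(F_\lambda X)\cong\bigoplus_{g\in G}{}_R(X,{}^gX),
\]
where the $g=1$ summand is the local ring $\End_R(X)$ and, for $g\neq 1$, every morphism $X\to{}^gX$ must lie in the radical (otherwise it would induce an isomorphism $X\cong{}^gX$, contradicting free action). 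Thus $\End_A(F_\lambda X)$ is local. For \textbf{(4)} I would combine the adjunction $F_\lambda\dashv F_\bullet$ with the decomposition of (2):
\[
{}_A(F_\lambda X,F_\lambda Y)\cong{}_R(X,F_\bullet F_\lambda Y)\cong{}_R\!\Bigl(X,\bigoplus_{g}{}^gY\Bigr)\cong\bigoplus_{g}{}_R(X,{}^gY),
\]
where the last step uses that $X$ is finite dimensional so only finitely many summands contribute. The mirror isomorphism comes from the adjunction $F_\bullet\dashv F_\rho$ together with the fact that $F_\rho Y=F_\lambda Y$ on finite dimensional modules.

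The main obstacle is \textbf{(5)}, the density of $F_\lambda$ on $\mod(A)$ under the joint hypothesis that $G$ is torsion-free and $R$ is locally support-finite. This is the content of the Dowbor--Skowro\'nski theorem and is not a formal consequence of the preceding items. The strategy I would follow is: given $M\in\mod(A)$, consider the pull-up $F_\bullet M\in\Mod(R)$, which is locally finite dimensional but a priori infinite dimensional; local support-finiteness of $R$ is used to show that every indecomposable summand of $F_\bullet M$ lies in $\mod(R)$ and that the $G$-orbit of the support of $F_\bullet M$ decomposes into only finitely many orbits of supports of indecomposable $R$-modules. One then exhibits a finite dimensional $R$-module $X$, obtained by picking one representative from each $G$-orbit of indecomposable summands of $F_\bullet M$, and uses torsion-freeness (so that $G$ acts freely on $\ind(R)$ by (3), hence distinct $G$-translates of $X$ contribute distinct indecomposable summands) to show via (2) and Krull--Schmidt that $F_\lambda(X)\cong M$. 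Because this argument is delicate and relies on a careful analysis of support combinatorics in the orbit category, I would cite \cite{DoSk} for the full proof rather than reproduce it.
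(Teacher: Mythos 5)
Your proposal is essentially correct, but note that the paper does not prove this theorem at all: it is stated as a summary of classical results and attributed to Gabriel, Bongartz--Gabriel, Martinez-Villa--de la Pe\~na and Dowbor--Skowro\'nski. The only fragment of argument the paper supplies is Remark \ref{0r1} on item (4), and there your route coincides with the paper's: the isomorphism follows from the adjoint pair $(F_{\lambda},F_{\bullet})$ together with the observation that ${}_{R}(X,{}^{g}Y)\neq 0$ for only finitely many $g$ because $G$ acts freely on objects. Your treatment of (1), (2) and the first half of (3) is the standard re-indexing and support argument and is fine; for (5) deferring to \cite{DoSk} is exactly what the paper does.

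Two points in your sketch deserve to be made explicit if this were written out in full. First, in (2) the direct sums $\bigoplus_{g}{}^{g}X$ are infinite when $G$ is infinite, so the uniqueness of decomposition you invoke is the Krull--Remak--Schmidt--Azumaya theorem for arbitrary direct sums of modules with local endomorphism rings, not the finite Krull--Schmidt theorem. Second, in (3) it is not enough to say that each morphism $X\ra{}^{g}X$ with $g\neq 1$ ``lies in the radical''; one must check that the non-invertible elements of $\End_{A}(F_{\lambda}X)\cong\bigoplus_{g}{}_{R}(X,{}^{g}X)$ form an ideal, which one does by showing that the $1$-component of any composite picks up, from the cross terms, only maps $X\ra{}^{g}X\ra X$ that cannot be isomorphisms (else $X\cong{}^{g}X$), hence lie in $\rad\End_{R}(X)$; the resulting codimension-one ideal is then the unique maximal one. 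Neither point is a gap in substance, only in exposition.
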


\begin{rem}\label{0r1} We give more details about Theorem \ref{0t1} (4). Assume that $X,Y\in\mod(R)$. Observe that there are only finitely many $g\in G$ such that ${}_{R}(X,{}^{g}Y)\neq 0$, because $G$ acts freely on the objects of $R$. Then \ref{0t1} (4) follows easily by the fact that $(F_{\lambda},F_{\bullet})$ is an adjoint pair. Moreover, these isomorphisms are natural in both variables $X,Y$ and so the bifunctors ${}_{A}(F_{\lambda}(-),F_{\lambda}(\cdot))$, $\bigoplus_{g\in G}{}_{R}(-,{}^{g}(\cdot))$ and $\bigoplus_{g\in G}{}_{R}({}^{g}(-),\cdot)$ are equivalent. More specifically, the push-down functor $F_{\lambda}:\mod(R)\ra\mod(A)$ induces natural isomorphisms $$\nu_{X,Y}:\bigoplus_{g\in G}{}_{R}({}^{g}X,Y)\ra{}_{A}(F_{\lambda}(X),F_{\lambda}(Y))$$ of vector spaces, for any $X,Y\in\mod(R)$, given by $$\nu_{X,Y}((f_{g})_{g\in G})=\sum_{g\in G}F_{\lambda}(f_{g})$$ where $f_{g}:{}^{g}X\ra Y$, for any $g\in G$. In this description we identify $F_{\lambda}({}^{g}X)$ with $F_{\lambda}(X)$, for any $R$-module $X$ and $g\in G$. This isomorphism is used freely in the paper.
\end{rem}

We recall that if the group $G$ is torsion-free, then the functor $F_{\lambda}:\mod(R)\ra\mod(A)$ preserves right and left minimal almost split homomorphisms, Auslander-Reiten sequences and induces an injection $\ind(R)\slash G\hookrightarrow\ind(A)$, see \cite{BoGa,DoSk}.

In the paper we assume the following definition of a Galois $G$-precovering functor between additive Krull-Schmidt $K$-categories.

\begin{df}\label{0d1} Assume that $F:\CC\ra\CD$ is an additive functor between additive Krull-Schmidt $K$-categories $\CC$ and $\CD$. Assume that $G$ is a group acting freely on the isomorphism classes of indecomposable objects of $\CC$. We say that $F:\CC\ra\CD$ is a \emph{Galois $G$-precovering} if and only if the following assertions hold.
\begin{enumerate}[\rm(1)]
\item There are isomorphisms $F({}^{g}C)\cong F(C)$ and $F({}^{g}f)\cong F(f)$, for any object $C$ and morphism $f$ in $\CC$.
\item If $X,Y$ are indecomposable objects of $\CC$, then $F(X)\cong F(Y)$ implies $Y\cong {}^{g}X$, for some $g\in G$.
    \item The functor $F:\CC\ra\CD$ preserves indecomposability.
\item The functor $F$ induces the following isomorphisms of vector spaces $$\bigoplus_{g\in G}{}_{\CC}({}^{g}X,Y)\cong{}_{\CD}(F(X),F(Y))\cong\bigoplus_{g\in G}{}_{\CC}(X,{}^{g}Y),$$ for any objects $X,Y$ of $\CC$.
\end{enumerate} If a Galois $G$-precovering is a dense functor, we call it a \emph{Galois $G$-covering.} \epv
\end{df} 
Theorem \ref{0t1} implies that if the group $G$ acts freely on $\ind(R)$, then the push-down functor $F_{\lambda}$ is a Galois $G$-precovering of module categories. Moreover, if $G$ is torsion-free and $R$ is locally support-finite, then $F_{\lambda}$ is a Galois $G$-covering. However, $F_{\lambda}$ is not dense in general which leads to the following definitions, introduced in \cite{DoSk}. Let $F:R\ra A$ be a Galois covering. An indecomposable module $M\in\mod(A)$ is of the \emph{first kind} if and only if $M$ lies in the image of $F_\lambda$. Otherwise, $M$ is of the \emph{second kind}. We emphasize that \cite{DoSk} gives important characterizations of the modules of the first and the second kind for a wide class of Galois coverings. 

\subsection{Abelian categories and Krull-Gabriel dimension}


Throughout the paper we assume that any abelian category $\CC$ is \textit{skeletally small} which means that the class of all isomorphism classes of objects of $\CC$ is a set.

Assume that $\CC$ is an abelian category and $X,Y$ are objects of $\CC$. We denote by ${}_{\CC}(X,Y)$ the abelian group $\Hom_{\CC}(X,Y)$ of all \emph{homomorphisms} (or \emph{morphisms}) from $X$ to $Y$. If $f\in{}_{\CC}(X,Y)$, then $\Ker_{\CC}(f)$, $\Im_{\CC}(f)$, $\Coker_{\CC}(f)$ and $\Coim_{\CC}(f)$ denote \textit{kernel}, \textit{image}, \textit{cokernel} and \textit{coimage} of $f$ in $\CC$, respectively. We refer to \cite{Po} for precise definitions of these notions in abelian categories. 

Assume that $X_{1},\dots,X_{n}$ and $Y_{1},\dots,Y_{m}$ are objects of an abelian catefgory $\CC$. We use the standard \textit{matrix notation} for a morphism $$f\in{}_{\CC}(\bigoplus_{i=1}^{n}X_{i},\bigoplus_{j=1}^{m}Y_{j}),$$ that is, we write $f=[f_{ji}]_{i=1,\dots,n}^{j=1,\dots,m}$ where $p_{j}:\bigoplus_{k=1}^{n}Y_{k}\ra Y_{j}$ is the split epimorphism, $u_{i}:X_{i}\ra\bigoplus_{k=1}^{m}X_{k}$ is the split monomorphism and $f_{ji}=p_{j}fu_{i}$, for any $i=1,\dots,n$, $j=1,\dots,m$. We say in this case that $f$ \textit{is defined by homomorphisms $f_{ji}$}.

Assume that $\CC,\CD$ are abelian categories. A functor $F:\CC\ra\CD$ is \textit{exact} if and only if for any exact sequence $0\ra X\stackrel{f}{\rightarrow}Z\stackrel{g}{\rightarrow}Y\ra 0$ in $\CC$ the sequence $$0\ra F(X)\stackrel{F(f)}{\longrightarrow}F(Z)\stackrel{F(g)}{\longrightarrow}F(Y)\ra 0$$ is exact in $\CD$. Equivalently, the functor $F$ preserves finite limits, finite colimits, and is both left and right exact, see \cite[VIII]{McL} for details. In particular, an exact functor $F:\CC\ra\CD$ preserves kernels, images, cokernels and coimages of homomorphisms in $\CC$.

Assume that $\CC$ is an abelian category. A full subcategory $\CS$ of $\CC$ is a \textit{Serre subcategory} if and only if for any exact sequence $0\ra X\ra Z\ra Y\ra 0$ in $\CS$ we have $Z\in\CS$ if and only if $X,Y\in\CS$. Equivalently, $\CS$ is closed under subobjects, quotients and extensions. We recall that a Serre subcategory is an abelian subcategory. Important examples of Serre subcategories are given by kernels of exact functors.

Assume that $\CS$ is a Serre subcategory of $\CC$. We refer the reader to \cite{Po} for the precise definition of the \textit{quotient category} $\CC\slash\CS$. For our purposes we only need to know that the quotient category $\CC\slash\CS$ is abelian and there exists an exact \textit{quotient functor} $q_{\CS}:\CC\ra\CC\slash\CS$ such that $q_{\CS}(X)=X$, for any object $X$ of $\CC$.

Assume that $\CC$ is an abelian category. An object $S$ of $\CC$ is \textit{simple} in $\CC$ if and only if $S\neq 0$ and any subobject of $S$ in $\CC$ is either $S$ or $0$. An object $T$ of $\CC$ has \textit{finite length} in $\CC$ if and only if there exists a chain of subobjects $$0=T_{0}\subseteq T_{1}\subseteq\dots\subseteq T_{n}=T$$ of $T$ such that $T_{i+1}\slash T_{i}$ is simple in $\CC$, for any $i=0,\dots,n-1$. We call such a chain a \textit{composition series} of $T$. If $T$ has finite length, then the number $n$ is unique, called the \textit{length} of $T$ and denoted by $l_{\CC}(T)$. It is convenient to recall that if $0\ra X\ra Z\ra Y\ra 0$ is a short exact sequence in $\CC$, then $l_{\CC}(Z)=l_{\CC}(X)+l_{\CC}(Y)$ and hence $Z$ has finite length if and only if $X,Y$ have finite length.

Assume that $\CC$ is an abelian category. Following \cite{Ge2,Geigle1985}, we define the \textit{Krull-Gabriel filtration} $(\CC_{\alpha})_{\alpha}$ of $\CC$ recursively as follows: 
\begin{enumerate}[\rm(1)]
	\item $\CC_{-1}=0$,
	\item $\CC_{\alpha+1}$ is the Serre subcategory of $\CC$ formed by all objects having finite length in the quotient category $\CC\slash\CC_{\alpha}$, for any ordinal number $\alpha$,
	\item $\CC_{\beta}=\bigcup_{\alpha<\beta}\CC_{\alpha}$, for any limit ordinal $\beta$.
\end{enumerate} 

Observe that $\CC_{0}$ is the Serre subcategory of $\CC$ formed by all objects of $\CC$ of finite length and $\CC_{\alpha}$ is a Serre subcategory of $\CC_{\alpha+1}$, for any ordinal $\alpha\geq -1$.

Assume that $(\CC_{\alpha})_{\alpha}$ is the Krull-Gabriel filtration of $\CC$. Then the \textit{Krull-Gabriel dimension} $\KG(\CC)$ of $\CC$ as the smallest ordinal number $\alpha$ such that $\CC_{\alpha}=\CC$, if such a number exists. Otherwise, we set $\KG(\CC)=\infty$ and take a technical assumption that $\alpha<\infty$, for any ordinal number $\alpha$. If $\KG(\CC)\in\NN$, then the Krull-Gabriel dimension of $\CC$ is \textit{finite}. If $\KG(\CC)=\infty$, then the Krull-Gabriel dimension of $\CC$ is \textit{undefined}.

The following standard lemma is useful in the study of Krull-Gabriel dimension.

\begin{lm}\label{0l1} Assume that $\CC,\CD$ are abelian categories and $F:\CC\ra\CD$ is an exact functor.
\begin{enumerate}[\rm(1)]
  \item If $F$ is faithful, then $\KG(\CC)\leq\KG(\CD)$.
  \item If $F$ is full and dense, then $\KG(\CD)\leq\KG(\CC)$.
\end{enumerate}
\end{lm}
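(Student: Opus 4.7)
The plan is to prove both inequalities by transfinite induction on the ordinal index $\alpha$ of the Krull-Gabriel filtration. In each direction the base case $\alpha=0$ reduces to a length-comparison statement, and the successor step is obtained by passing to an appropriate induced functor on a Serre quotient and reapplying the base case. Limit ordinals are handled by the definition $\CC_{\beta}=\bigcup_{\alpha<\beta}\CC_{\alpha}$.

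For assertion (1), the key observation is that an exact faithful functor satisfies $l_{\CC}(C)\leq l_{\CD}(F(C))$. Indeed, such an $F$ reflects zero objects --- if $F(X)=0$, then $F(\id_{X})=0$, and faithfulness forces $\id_{X}=0$ --- and applying $F$ to the kernel-cokernel exact sequence of a morphism whose image is an iso shows that $F$ also reflects isomorphisms; hence a strictly ascending chain $0\subsetneq C_{1}\subsetneq\cdots\subsetneq C_{n}=C$ in $\CC$ remains strictly ascending in $F(C)$. For the inductive step, introduce the Serre subcategory $F^{-1}(\CD_{\alpha}):=\{C\in\CC:F(C)\in\CD_{\alpha}\}$, which is the kernel of the exact composition $q^{\CD}_{\alpha}\circ F$; the universal property yields an induced exact functor $\overline{F}_{\alpha}:\CC/F^{-1}(\CD_{\alpha})\ra\CD/\CD_{\alpha}$, automatically faithful because the kernel has been quotiented out. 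The inductive hypothesis gives $F^{-1}(\CD_{\alpha})\subseteq\CC_{\alpha}$, so there is a further Serre-quotient functor $\CC/F^{-1}(\CD_{\alpha})\ra\CC/\CC_{\alpha}$, which preserves finite length because every subobject of an image lifts to a subobject of the source, so simples go to simple or zero. Applying the base case to $\overline{F}_{\alpha}$ and this length preservation to the further quotient yields $F(C)\in\CD_{\alpha+1}\Rightarrow C\in\CC_{\alpha+1}$, and taking $\alpha=\KG(\CD)$ gives $\KG(\CC)\leq\KG(\CD)$.

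For assertion (2), I reduce to the case of a Serre quotient. Put $\Ker F:=\{C\in\CC:F(C)=0\}$, which is a Serre subcategory, and factor $F$ as $\CC\xrightarrow{q}\CC/\Ker F\xrightarrow{\overline{F}}\CD$. Then $\overline{F}$ is exact (standard), faithful (because we have quotiented by the kernel), full (any $h:F(X)\ra F(Y)$ equals $F(g)$ by fullness of $F$, giving a trivial roof representing a preimage in $\CC/\Ker F$), and dense (since $F$ is), hence an equivalence of categories. So $\KG(\CD)=\KG(\CC/\Ker F)$, and it suffices to show $\KG(\CC/\CS)\leq\KG(\CC)$ for any Serre subcategory $\CS$. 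This is a parallel transfinite induction: the base case uses that the Serre quotient functor $q:\CC\ra\CC/\CS$ has subobject lifting, so sends simples to simple or zero and hence preserves finite length; the successor step follows by the same quotient-on-quotient construction as in (1), producing from $q$ an induced exact functor $\CC/\CC_{\alpha}\ra(\CC/\CS)/(\CC/\CS)_{\alpha}$ that itself has subobject lifting. This yields $\KG(\CC/\Ker F)\leq\KG(\CC)$ and hence $\KG(\CD)\leq\KG(\CC)$.

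The main obstacle in each part is the inductive step, specifically verifying that the induced functor on a Serre quotient inherits the property needed to send simples to simple or zero: faithfulness in (1), and subobject lifting in (2). In (1) this is automatic because $F^{-1}(\CD_{\alpha})$ has been defined as the kernel of $q^{\CD}_{\alpha}\circ F$; in (2), the reduction to a Serre quotient via the equivalence $\CD\simeq\CC/\Ker F$ circumvents the delicate question of whether the naive functor $\CC/\CC_{\alpha}\ra\CD/\CD_{\alpha}$ is full, by replacing fullness with the automatic subobject-lifting property of Serre localizations. Once these verifications are in place, the transfinite inductions run smoothly and yield the desired inequalities.
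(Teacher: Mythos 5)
Your proof is correct and follows essentially the same route as the paper's: the paper sketches exactly this argument for (1) (exact faithful functors preserve strictly ascending chains, hence $F(T)\in\CD_{\alpha}$ implies $T\in\CC_{\alpha}$, deferring the transfinite bookkeeping to \cite[Appendix B]{Kr} and \cite[Lemma 3.2]{P4}) and dismisses (2) as analogous. Your write-up simply supplies the details the paper omits — in particular the induced faithful functors on Serre quotients for the successor step, and in (2) the clean reduction to $\CD\simeq\CC/\Ker F$ — and these verifications are all sound.
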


\begin{proof} We only give a sketch of the proof of (1), referring to \cite[Appendix B]{Kr} for details. Let $(\CC_{\alpha})_{\alpha}$ and $(\CD_{\alpha})_{\alpha}$ be the Krull-Gabriel filtrations of $\CC,\CD$, respectively. Assume that $F:\CC\ra\CD$ is exact and faithful. Then we conclude that if $0=T_{0}\subseteq T_{1}\subseteq\dots\subseteq T_{n}=T$ is a chain of subobjects of $T$ which is not a composition series of $T$ in $\CC_{\alpha}$, then $$0=F(T_{0})\subseteq F(T_{1})\subseteq\dots\subseteq F(T_{n})=F(T)$$ is a chain of subobjects of $F(T)$ which is not a composition series of $F(T)$ in $\CD_{\alpha}$, for any ordinal $\alpha$. This shows that $F(T)\in\CD_{\alpha}$ implies $T\in\CC_{\alpha}$, for any $T\in\CC$ and any ordinal $\alpha$. Hence $\KG(\CC)\leq\KG(\CD)$ by \cite[Lemma 3.2]{P4}. Analogous arguments work for (2).  
\end{proof}

\begin{cor}\label{0c1} Assume that $\CS$ is a Serre subcategory of $\CC$. Then $\KG(\CC\slash\CS)\leq\KG(\CC)$ and $\KG(\CS)\leq\KG(\CC)$. Moreover, $\CC$ has finite Krull-Gabriel dimension if and only if $\CS$ and $\CC\slash\CS$ have finite Krull-Gabriel dimensions.
\end{cor}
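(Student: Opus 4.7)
The plan is to derive both inequalities by applying Lemma \ref{0l1} to the two canonical functors associated with the Serre subcategory $\CS$. First, the inclusion functor $\iota:\CS\hookrightarrow\CC$ is exact, since $\CS$ being a Serre subcategory is closed under kernels, cokernels and extensions in $\CC$, and is trivially faithful; hence Lemma \ref{0l1}(1) yields $\KG(\CS)\leq\KG(\CC)$. Second, the quotient functor $q_\CS:\CC\ra\CC/\CS$ is exact, full and dense by the very construction of the quotient abelian category recalled earlier in the text; hence Lemma \ref{0l1}(2) yields $\KG(\CC/\CS)\leq\KG(\CC)$.

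For the \emph{moreover} part, the forward direction is immediate from the two inequalities just established. For the converse, I would prove that if $\KG(\CS)=m$ and $\KG(\CC/\CS)=n$ are both finite, then $\KG(\CC)$ is finite, and in fact bounded by $m+n+1$. The first preliminary step is to show, by transfinite induction on the Krull-Gabriel filtration of $\CS$, that $\CS_\alpha\subseteq\CC_\alpha$ for every ordinal $\alpha$, and in particular that $\CS\subseteq\CC_m$. The base case uses the key observation that a simple object $S$ of $\CS$ remains simple in $\CC$: any subobject of $S$ in $\CC$ lies in $\CS$ by closure under subobjects, so simplicity in $\CS$ forces it to be $0$ or $S$. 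Hence composition series in $\CS$ are composition series in $\CC$, giving $\CS_0\subseteq\CC_0$, and the inductive step proceeds analogously after verifying that length in $\CS/\CS_\alpha$ controls length in $\CC/\CC_\alpha$ for objects of $\CS$.

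Once $\CS\subseteq\CC_m$ is secured, the second step is a transfinite induction on the index $\alpha\leq n$ showing that $q_\CS^{-1}((\CC/\CS)_\alpha)\subseteq\CC_{m+\alpha+1}$. The idea is: given $X\in\CC$ with $q_\CS(X)$ having a composition series in $(\CC/\CS)_\alpha$, lift this composition series to a filtration $0=X_0\subseteq X_1\subseteq\dots\subseteq X_k=X$ in $\CC$ whose successive quotients $X_{i+1}/X_i$ either lie in $\CS$ (hence belong to $\CC_m$) or become simple after passing modulo $\CC_m$, using exactness and fullness of $q_\CS$ to transfer subobject lattices. The main obstacle is this lifting and the bookkeeping at limit ordinals: one must carefully exploit exactness of $q_\CS$ together with the fact that the induced functor $\CC/\CC_m\to(\CC/\CS)/\overline{q_\CS(\CC_m)}$ is well behaved, in order to conclude that simplicity in the target quotient lifts to finite length in $\CC/\CC_m$. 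Once this is done, $\alpha=n$ gives $\CC=q_\CS^{-1}(\CC/\CS)\subseteq\CC_{m+n+1}$, completing the proof.
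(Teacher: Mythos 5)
Your proof of the two inequalities is exactly the paper's: Lemma \ref{0l1}(1) applied to the exact faithful inclusion $\CS\hookrightarrow\CC$, and Lemma \ref{0l1}(2) applied to the exact, full and dense quotient functor $q_{\CS}$. The divergence is in the \emph{moreover} part, which the paper does not prove at all --- it simply refers to \cite[Section 1]{Ge2} --- whereas you sketch the standard argument yielding the quantitative bound $\KG(\CC)\leq\KG(\CS)+\KG(\CC\slash\CS)+1$; this is essentially the argument behind the cited reference, so your route supplies a self-contained proof where the paper outsources one. Two points in your sketch need shoring up. In Step 1, exactness and faithfulness of the inclusion are not by themselves enough to push finite length from $\CS\slash\CS_\alpha$ to $\CC\slash\CC_\alpha$ (exact faithful functors reflect finite length but need not preserve it); what rescues the induction is that every subobject of $q(X)$ in a quotient $\CC\slash\CT$ is the image of a subobject of $X$ in $\CC$, and subobjects in $\CC$ of objects of $\CS$ lie in $\CS$, so the induced functor $\CS\slash(\CS\cap\CC_\alpha)\ra\CC\slash\CC_\alpha$ is fully faithful with image closed under subobjects and therefore preserves composition series; together with $\CS_\alpha\subseteq\CS\cap\CC_\alpha$ this gives $\CS_\alpha\subseteq\CC_\alpha$. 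In Step 2, the lifting of composition series and the limit-ordinal bookkeeping you flag as the main obstacle can be bypassed entirely by the third isomorphism theorem for quotient categories: for a Serre subcategory $\CT$ of $\CC\slash\CS$ one has $\CC\slash q_{\CS}^{-1}(\CT)\cong(\CC\slash\CS)\slash\CT$, so taking $\CT=(\CC\slash\CS)_\alpha$ and using the inductive hypothesis $q_{\CS}^{-1}((\CC\slash\CS)_\alpha)\subseteq\CC_{m+\alpha+1}$, finite length of $q_{\CS}(X)$ in $(\CC\slash\CS)\slash(\CC\slash\CS)_\alpha$ becomes finite length of $X$ in $\CC\slash q_{\CS}^{-1}((\CC\slash\CS)_\alpha)$ and hence in the further quotient $\CC\slash\CC_{m+\alpha+1}$, since quotient functors preserve finite length; this closes the induction without any explicit lifting. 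With these two repairs your argument is correct.
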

\begin{proof} The quotient functor $q_{\CS}:\CC\ra\CC\slash\CS$ is full, dense and exact and the inclusion functor $\CS\hookrightarrow\CC$ is faithful and exact. Hence the first part follows from Lemma \ref{0l1}. For the second part we refer to \cite[Section 1]{Ge2}. 
\end{proof}

\subsection{Finitely presented functors}

Assume that $R$ is a locally bounded $K$-category. Set $\CR=\mod(R)$ and denote by $\mod(\CR)$ the category of all contravariant $K$-linear functors from $\mod(R)$ to the category $\mod(K)$ of finite dimensional $K$-vector spaces.\footnote{In \cite{P4} the category $\mod(\CR)$ is denoted by $\CG(R)$.}  

Assume that $F,G,H\in\mod(\CR)$ and let $F\stackrel{u}{\rightarrow}G\stackrel{v}{\rightarrow}H$ be a sequence of homomorphisms of functors. Recall that this sequence is exact in $\mod(\CR)$ if and only if it induces an exact sequence $$F(X)\stackrel{u_{X}}{\rightarrow}G(X)\stackrel{v_{X}}{\rightarrow}H(X)$$ of vector spaces, for any $X\in\mod(R)$. If $F=0$, then $v:G\ra H$ is a monomorphism of functors, that is, $v_{X}:G(X)\ra H(X)$ is a monomorphism of vector spaces, for any $X\in\mod(R)$. In this case, $G$ is a subobject of $H$. If $H=0$, then $u:F\ra G$ is an epimorphism of functors, that is, $v_{X}:F(X)\ra G(X)$ is an epimorphism of vector spaces, for any $X\in\mod(R)$. In this case, $G$ is a quotient of $F$.

Let $M$ be an $R$-module. Then a \textit{contravariant hom-functor} represented by $M$ is the functor $H_{M}:\mod(R)\ra\MOD(K)$ such that $H_{M}(X)={}_{R}(X,M)$, for any $X\in\mod(R)$, and if $f\in{}_{R}(X,Y)$, then $H_{M}(f):{}_{R}(Y,M)\ra{}_{R}(X,M)$ where $H_{M}(f)(g)=gf$, for any $g\in{}_{R}(Y,M)$. The functor $H_{M}:\mod(R)\ra\MOD(K)$ is denoted by ${}_{R}(-,M)$.\footnote{The hom-functor is also denoted as ${}_{R}(*,M)$, ${}_{R}(?,M)$ etc. We agree that the domain of a hom-functor can be enlarged to $\Mod(R)$ or $\MOD(R)$.}

Recall that a homomorphism $f\in{}_{R}(M,N)$ induces a homomorphism of hom-functors ${}_{R}(-,f):{}_{R}(-,M)\ra{}_{R}(-,N)$ such that ${}_{R}(X,f):{}_{R}(X,M)\ra{}_{R}(X,N)$ is defined by ${}_{R}(X,f)(g)=fg$, for any $g\in{}_{R}(X,M)$. The Yoneda lemma implies that the function $f\mapsto{}_{R}(-,f)$ defines an isomorphism $${}_{R}(M,N)\ra{}_{\CR}({}_{R}(-,M),{}_{R}(-,N))$$ of vector spaces. Moreover, this yields $M\cong N$ if and only if ${}_{R}(-,M)\cong{}_{R}(-,N)$. 

Assume that $F\in\mod(\CR)$. The functor $F$ is \textit{finitely generated} if and only if there exists an epimorphism of functors ${}_{R}(-,N)\ra F$, for some $N\in\mod(R)$. The functor $F$ is \textit{finitely presented} if and only if there exists an exact sequence of functors $${}_{R}(-,M)\xrightarrow{{}_{R}(-,f)}{}_{R}(-,N)\ra F\ra 0,$$ for some $M,N\in\mod(R)$ and $R$-module homomorphism $f:M\ra N$. This means that $F\cong\Coker{}_{R}(-,f)$ and thus $F(X)$ is isomorphic to the cokernel of the map ${}_{R}(X,f):{}_{R}(X,M)\ra{}_{R}(X,N)$.

The full subcategory of $\mod(\CR)$, formed by all finitely presented functors, is denoted as $\CF(R)$.\footnote{Although this notation is unusual, we would like to remain consistent with the notation from our previous papers, mainly with \cite{P4,P6,J-PP1}.} Obviously ${}_{R}(-,M)\in\CF(R)$, for any $M\in\mod(R)$. Moreover, the functor ${}_{R}(-,M)$ is a projective object of the category $\CF(R)$ and any projective object of $\CF(R)$ is a hom-functor, see \cite{Au0}.

Assume that $R$ is a locally bounded $K$-category. Following \cite{Au}, we define the \textit{Krull-Gabriel dimension of $R$} as the Krull-Gabriel dimension $\KG(\CF(R))$ of the category $\CF(R)$. We denote $\KG(\CF(R))$ as $\KG(R)$. 

We also need relative versions of the above definitions. Let $\CX$ be a full subcategory of $\mod(R)$. If $M\in\CX$, then ${}_{\CX}(-,M):\CX\ra\MOD(K)$ denotes the functor ${}_{R}(-,M)$ restricted to the category $\CX$. If $M,N\in\CX$ are $R$-modules, then any $R$-homomorphism $f:M\ra N$ induces a homomorphism of functors ${}_{\CX}(-,f):{}_{\CX}(-,M)\ra{}_{\CX}(-,N)$. A functor $F:\CX^{\op}\ra\mod(K)$ is \textit{finitely generated} if and only if there exists an exact sequence of functors ${}_{\CX}(-,N)\ra F\ra 0$, for some $N\in\CX$. A functor $F:\CX^{\op}\ra\mod(K)$ is \textit{finitely presented} if and only if there exists an exact sequence of functors $${}_{\CX}(-,M)\xrightarrow{{}_{\CX}(-,f)}{}_{\CX}(-,N)\ra F\ra 0,$$ for some $M,N\in\CX$ and $R$-module homomorphism $f:M\ra N$. We denote by $\CF(\CX)$ the category of all finitely presented functors $\CX^{\op}\ra\mod(K)$. 

Assume that $\CX$ is a full subcategory of $\mod(R)$ which is closed under isomorphisms and direct summands. Then $\CX$ is \textit{contravariantly finite} \cite{AuRe} if and only if any module $M\in\mod(R)$ has a \textit{right $\CX$-approximation}. This means that there exists a module $X_{M}\in\CX$ and $R$-homomorphism $\alpha:X_{M}\ra M$ such that for any module $X\in\CX$ and any $R$-homomorphism $\beta:X\ra M$ there is $\gamma:X\ra X_{M}$ such that $\alpha\gamma=\beta$. Equivalently, there exists an exact sequence of functors ${}_{\CX}(-,X_{M})\ra {}_{\CX}(-,M)\ra 0$. Thus $\CX$ is contravariantly finite if and only if the functor ${}_{\CX}(-,M)$ is finitely generated, for any $M\in\mod(R)$. If $\CX$ is contravariantly finite, then the category $\CF(\CX)$ is abelian \cite{Ge2}.

A full subcategory $\CX$ of $\mod(R)$ is a contravariantly finite if and only if for any functor $F\in\CF(R)$ the restriction $F|_{\CX}:\CX\ra\mod(K)$ is a finitely presented functor, see for example \cite{Au}. Then we define the \textit{restriction functor} $r_{\CX}:\CF(R)\ra\CF(\CX)$ such that $r_{\CX}(F)=F|_{\CX}$, for any $F\in\CF(R)$. The restriction functor $r_{\CX}:\CF(R)\ra\CF(\CX)$ is exact, full and dense \cite[Section 2]{Ge2}. Hence we conclude from \ref{0l1} that $\KG(\CX)\leq\KG(R)$. We apply this fact in a very useful lemma below. First we need to recall some notions. 

Assume that $R$ is a locally bounded $K$-category. A full subcategory $B$ of $R$ is \textit{convex} if and only if for any $n\geq 1$ and objects $x,z_{1},\dots,z_{n},y$ of $R$ the following condition is satisfied: if $x,y$ are objects of $B$ and the vector spaces of morphisms $$R(x,z_{1}), R(z_{1},z_{2}),\dots,R(z_{n-1},z_{n}),R(z_{n},y)$$ are nonzero, then $z_{1},\dots,z_{n}$ are objects of $B$. If $R=\und{KQ}/I$ is a path $K$-category of a bound quiver $(Q,I)$ and $B$ is a convex subcategory of $R$, then $B=\und{KQ'}/I'$, for some convex subquiver $Q'$ of the quiver $Q$. Recall that a full subquiver $Q'$ of $Q$ is \textit{convex} if and only if for any path $c_{1}\dots c_{n}$ from the vertex $x$ to the vertex $y$ in $Q$ such that $x,y\in Q'_{0}$ we have $s(c_{i})\in Q'_{0}$, for $i=1,\dots,n-1$.

Assume that $R$ is a locally bounded $K$-category. If $B$ is a convex subcategory of $R$, then we denote by $$\CE_{B}:\mod(B)\ra\mod(R)\subseteq\Mod(R)$$ the \textit{functor of extension by zeros}. Recall that if $M\in\mod(B)$, then $\CE_{B}(M)$ coincides with $M$ on $B$ and for $x\notin\ob(B)$ we have $\CE_{B}(M)(x)=0$. The convexity of $B$ yields that $\CE_{B}(M)$ is an $R$-module and that any $R$-module whose support is contained in $B$ is a $B$-module. The functor $\CE_{B}$ is fully faithful and exact, hence we often identify $M$ with $\CE_{B}(M)$.

\begin{lm}\label{0l2} Assume that $R$ is a locally bounded $K$-category. 
\begin{enumerate}[\rm(1)]
  \item If $B$ is a convex subcategory of $R$, then the category $\mod(B)$ is a contravariantly finite subcategory of $\mod(R)$.
  \item If $B$ is a factor category of $R$, then the category $\mod(B)$ is a contravariantly finite subcategory of $\mod(R)$.
\end{enumerate} In both cases, $\KG(B)\leq\KG(R)$.
\end{lm}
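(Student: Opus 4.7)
The plan is to reduce both parts of the lemma to the general principle recalled just before its statement: once $\mod(B)$ is known to be contravariantly finite inside $\mod(R)$, the restriction functor $r_{\mod(B)}:\CF(R)\ra\CF(B)$ is exact, full and dense, and the inequality $\KG(B)\leq\KG(R)$ is immediate from Lemma~\ref{0l1}\,(2). Thus the real work in each case is to exhibit, for every $M\in\mod(R)$, an explicit right $\mod(B)$-approximation.

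For (1), I identify $\mod(B)$ via $\CE_B$ with the full subcategory of $\mod(R)$ consisting of modules with support in $B$; that this identification is well defined is precisely what convexity of $B$ provides, as recalled in Section~2.1. Given $M\in\mod(R)$, the natural candidate is the largest $R$-submodule of $M$ supported in $B$, described explicitly by $X_M(y)=0$ for $y\notin\ob(B)$ and
\[
X_M(y)=\bigcap_{x\notin\ob(B),\,f\in R(x,y)}\Ker\bigl(M(f)\bigr)\quad\text{for }y\in\ob(B).
\]
A short functoriality check (using $M(g\circ f)=M(f)\circ M(g)$) shows that $X_M$ is stable under the $R$-action, and convexity of $B$ then lets one view $X_M$ as the image under $\CE_B$ of a genuine $B$-module. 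The approximation property is immediate: for any $R$-homomorphism $\phi:\CE_B(N)\ra M$ with $N\in\mod(B)$, the image $\im(\phi)$ is a submodule of $M$ with support in $B$, hence contained in $X_M$, so $\phi$ factors through the inclusion $X_M\hookrightarrow M$.

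For (2), I write $B=R/I$ for a two-sided ideal $I\subseteq R$ and identify $\mod(B)$ with the full subcategory of $\mod(R)$ whose objects are annihilated by $I$. The natural candidate is now the ``$I$-torsion'' submodule
\[
X_M(y)=\bigcap_{x\in\ob(R),\,f\in I(x,y)}\Ker\bigl(M(f):M(y)\ra M(x)\bigr).
\]
The key verification is that $X_M$ is stable under the $R$-action, which uses the ideal property of $I$ in an essential way: for $g:y\ra z$ in $R$, $m\in X_M(z)$, and $f\in I(x,y)$, the composite $g\circ f:x\ra z$ lies in $I(x,z)$, so $M(f)(M(g)(m))=M(g\circ f)(m)=0$, and hence $M(g)(m)\in X_M(y)$. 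By construction $X_M$ is annihilated by $I$, so lies in $\mod(B)$, and the image of any $R$-homomorphism from a $B$-module into $M$ is automatically annihilated by $I$, hence contained in $X_M$.

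The step I expect to require the most care is the passage in case (1) from the functorial object $X_M$ (an $R$-submodule of $M$ supported on $B$) to a genuine object of $\mod(B)$ under $\CE_B$: convexity of $B$ is precisely what rescues this identification, and without it an $R$-submodule with support in $B$ need not lie in the essential image of $\CE_B$ (a short three-vertex example with a nonzero composition bypassing $B$ makes this transparent). Once the two approximation constructions are in place, the Krull-Gabriel inequality $\KG(B)\leq\KG(R)$ follows uniformly in both cases from the restriction-functor argument mentioned in the first paragraph.
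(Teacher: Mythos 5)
Your proof is correct, and it follows the same overall strategy as the paper (exhibit, for each $M\in\mod(R)$, a submodule of $M$ lying in $\mod(B)$ that absorbs the image of every map from a $\mod(B)$-object, then invoke the exact, full and dense restriction functor together with Lemma \ref{0l1}), but the approximating submodule is constructed dually. The paper takes the \emph{trace}: the submodule $M'$ generated by all images $\Im(f)$ with $f:X\ra M$, $X\in\mod(B)$, which is supported in $B$ (resp.\ annihilated by $\Ker(\pi)$) because each $\Im(f)$ is, and which contains every such image by construction. You take the \emph{largest} submodule with the relevant property --- the intersection of kernels $\bigcap\Ker(M(f))$ over morphisms coming from outside $B$, resp.\ over morphisms in the ideal --- and then observe that any image of a $\mod(B)$-object, being itself supported in $B$ (resp.\ annihilated by $I$), must land inside it. Both submodules serve as right $\mod(B)$-approximations (yours contains the paper's), and your stability checks, including the use of convexity to see that an $R$-module supported in $B$ is a $B$-module and the use of the ideal property to see that $X_M$ is closed under the $R$-action, are exactly the points where the hypotheses enter; the paper's trace construction has the minor advantage of running verbatim in both cases, while yours gives a closed formula for the approximating object.
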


\begin{proof} (1) Assume that $B$ is a convex subcategory of $R$. Then any $B$-module is also an $R$-module via the extension by zeros. Let $M\in\mod(R)$ and assume that $M'$ is the $R$-module generated by all modules of the form $\Im(f)\subseteq M$ where $f:X\ra M$ is an $R$-homomorphism and $X\in\mod(B)$. Observe that, for any such $f:X\ra M$, $\Im(f)$ is an $R$-module whose support is contained in $B$, and thus so is $M'$. The convexity of $B$ implies that $M'$ is a $B$-module. This shows that the inclusion $M'\hookrightarrow M$ of $R$-modules is the right $\mod(B)$-approximation of $M$, because we have $\Im(\beta)\subseteq M'$, for any $X\in\mod(B)$ and $\beta:X\ra M$.

(2) Assume that $B$ is a factor category of $R$. Hence there is a $K$-linear epimorphism $\pi:R\ra B$ of locally bounded $K$-categories. The epimorphism $\pi$ induces a fully faithful and exact functor $\iota:\mod(B)\ra\mod(R)$ such that $\iota(X)=X\circ\pi$. Let $M\in\mod(R)$. Similarly as in (1), assume that $M'$ is the $R$-module generated by all modules of the form $\Im(f)\subseteq M$ where $f:\iota(X)\ra M$ is an $R$-homomorphism and $X\in\mod(B)$. We show that $\Im(f)$ is a $B$-module. Indeed, since $\iota(X)$ is annihilated by $\Ker(\pi)$, we have $$\Im(f)\cdot\Ker(\pi)=f(\iota(X))\cdot\Ker(\pi)=f(\iota(X)\cdot\Ker(\pi))=f(0)=0,$$ so $\Im(f)$ is a $B$-module. This implies that $M'$ is a $B$-module, because it is an $R$-module annihilated by $\Ker(\pi)$. As in (1), this shows that the inclusion $M'\hookrightarrow M$ of $R$-modules is the right $\mod(B)$-approximation of $M$.
\end{proof}

\section{General covering theory of functor categories}
In this section we prove our first main results. We start with recalling the existence of the \emph{tensor product bifunctor} \cite{FPN,Mi} for categories of modules over arbitrary small $K$-categories, abelian $K$-categories in particular. Then we determine in Theorem \ref{t1} the left and the right adjoint to the pull-up functor. This fact is already proved in \cite[Proposition 6.1]{Bu}, but it seems not to be widely recognized. As a direct consequence of Theorem \ref{t1} we get Corollary \ref{c1}, describing the left and the right adjoint to the pull-up along $F_{\lambda}:\mod(R)\ra\mod(A)$, denoted by $\Psi$, where $F:R\ra A$ is the given Galois $G$-covering. We believe that Corollary \ref{c1} provides the natural setting for studying Galois coverings of functor categories. Then we prove the crucial Theorem \ref{t2}, showing that the left and the right adjoints to $\Psi$ coincide on the category of finitely presented functors, which implies that the restricted functor is exact. Since it is also faithful, we deduce in Theorem \ref{t3} the main result stating that the Galois coverings do not increase the Krull-Gabriel dimension. This assertion means that if $F:R\ra A$ is a Galois covering of locally bounded $K$-categories $R$ and $A$, then $\KG(R)\leq\KG(A)$.

\subsection{Tensor products and adjoints to the pull-up functor}
Assume that $\CA$ is a small $K$-category. We denote by $$\CA(-,-):\CA^{\op}\times\CA\ra\MOD(K)$$ the \emph{hom-bifunctor}. Assume that $\CA,\CB$ are small $K$-categories (note that $\CA$ and $\CB$ may be abelian). We denote by $\CA\lMOD$ the category of all \emph{left $\CA$-modules}, that is, the category consisting of covariant functors $\CA\ra\MOD(K)$ as objects and natural transformations as morphisms. The category of all \emph{right $\CA$-modules}, denoted by $\MOD(\CA)$, is the category consisting of covariant functors $\CA^{\op}\ra\MOD(K)$ (equivalently, contravariant functors $\CA\ra\MOD(K)$) as objects and natural transformations as morphisms. Since a left $\CA$-module is a right $\CA^{\op}$-module in a natural way, we sometimes identify $\MOD(\CA^{\op})$ with $\CA\lMOD$. An \emph{$\CA$-$\CB$-bimodule} is a functor $\CB^{\op}\times\CA\ra\MOD(K)$, that is, a functor which is contravariant in the first variable and covariant in the second.

We usually denote objects of $\CA$ with lowercase letters whereas objects of $\MOD(\CA)$ or $\CA\lMOD$ with uppercase letters. Observe that $\CA(-,-)$ is an $\CA$-$\CA$-bimodule and if $a\in\CA$, then $\CA(a,-)\in\CA\lMOD$ and $\CA(-,a)\in\MOD(A)$. We also denote by $${}_{\CA}(-,-):\MOD(\CA)^{\op}\times\MOD(\CA)\ra\MOD(K)$$ the \emph{hom-bifunctor}. Hence ${}_{\CA}(-,-)=\MOD(\CA)(-,-)$ but we do not use the latter notation. Note that ${}_{\CA^{\op}}(-,-)$ is the hom-bifunctor defined for the category of left $\CA$-modules. We emphasize that the notation introduced above is consistent with that for modules over locally bounded $K$-categories, see Section 2.

We recall from \cite{FPN}, see also \cite{Mi}, that there exists a \emph{tensor product bifunctor} $$-\otimes_{\CA}-:\MOD(\CA)\times\CA\lMOD\ra\MOD(K)$$ such that, for any $\CA$-$\CB$-bimodule ${}_{\CA}M_{\CB}$ and $\CB$-$\CA$-bimodule ${}_{\CB}N_{\CA}$:
\begin{itemize}
  \item the functor $-\otimes{}_{\CA}M_{\CB}:\MOD(\CA)\ra\MOD(\CB)$ is the left adjoint to the functor ${}_{\CB}({}_{\CA}M_{\CB},-):\MOD(\CB)\ra\MOD(\CA)$,
  \item the functor ${}_{\CB}N_{\CA}\otimes_{\CA}-:\CA\lMOD\ra\CB\lMOD$ is the left adjoint to the functor ${}_{\CB^{\op}}({}_{\CB}N_{\CA},-):\CB\lMOD\ra\CA\lMOD$.
\end{itemize} Moreover, for any $M\in\MOD(\CA)$, $N\in\CA\lMOD$ and $a\in\CA$ we have the following natural \emph{co-Yoneda isomorphisms}: $M\otimes_{\CA}\CA(a,-)\cong M(a)$ and $\CA(-,a)\otimes_{\CA} N\cong N(a)$. Equivalently: $$M\otimes_{\CA}\CA(-,-)\cong M\textnormal{ and }\CA(-,-)\otimes_{\CA} N\cong N.$$ As the left adjoints to the appropriate hom-functors, the tensor product functors $-\otimes{}_{\CA}M_{\CB}$ and ${}_{\CB}N_{\CA}\otimes_{\CA}-$ are unique up to natural equivalence. Hence we conclude that for locally bounded $K$-categories $\CA$ and $\CB$ these functors coincide with the usual tensor products for modules, see also Section 2.1.

Assume that $G:\CA\ra\CB$ is a $K$-linear functor. The \emph{pull-up functor} along $G$ is the functor $G_{\bullet}:\MOD(\CB)\ra\MOD(\CA)$ defined as $(-)\circ G^{\op}$. This functor is often called the \emph{restriction functor}, but we view the naming coming from covering theory more suitable in general. The following theorem describes the left and the right adjoint functors to the pull-up functor, see \cite[Proposition 6.1]{Bu}.

\begin{thm} \label{t1}
 Assume that $G:\CA\ra\CB$ is a $K$-linear functor. The pull-up functor $G_{\bullet}:\MOD(\CB)\ra\MOD(\CA)$ has the left adjoint given by the functor $$G_{L}=?\otimes_{\CA}\CB(-,G(*)):\MOD(\CA)\ra\MOD(\CB)$$ and the right adjoint given by the functor $$G_{R}={}_{\CA}(\CB(G(*),-),?):\MOD(\CA)\ra\MOD(\CB).$$
\end{thm}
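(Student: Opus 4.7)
The plan is to derive both adjunctions directly from the tensor--hom adjunctions recalled above, combined with Yoneda and co-Yoneda. The key preliminary is to identify, with care about variance, the two bimodules in play: $\CB(-,G(*))$ is contravariant in $\CB$ and covariant in $\CA$, hence an $\CA$-$\CB$-bimodule in the paper's convention, while $\CB(G(*),-)$ is contravariant in $\CA$ and covariant in $\CB$, hence a $\CB$-$\CA$-bimodule. Once this is fixed, the pull-up $G_{\bullet}$ is just evaluation: $G_{\bullet}(K)(a) = K(G(a))$.

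For the left adjoint I would apply the first tensor--hom adjunction from Section~3.1 with ${}_{\CA}M_{\CB} = \CB(-,G(*))$, which yields
\[
{}_{\CB}\bigl(L \otimes_{\CA} \CB(-,G(*)),\, K\bigr) \cong {}_{\CA}\bigl(L,\, {}_{\CB}(\CB(-,G(*)), K)\bigr)
\]
naturally in $L \in \MOD(\CA)$ and $K \in \MOD(\CB)$. Applied objectwise at $a \in \CA$, Yoneda's lemma gives ${}_{\CB}(\CB(-,G(a)), K) \cong K(G(a)) = G_{\bullet}(K)(a)$; by naturality in $a$ this lifts to ${}_{\CB}(\CB(-,G(*)), K) \cong G_{\bullet}(K)$ in $\MOD(\CA)$, establishing $G_L \dashv G_{\bullet}$.

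For the right adjoint the crucial step is to re-express the pull-up itself as a tensor product. The co-Yoneda isomorphism over $\CB$ gives $K(G(a)) \cong K \otimes_{\CB} \CB(G(a),-)$, and by naturality in $a$ (using the $\CB$-$\CA$-bimodule structure of $\CB(G(*),-)$) this assembles into a natural isomorphism
\[
G_{\bullet}(K) \;\cong\; K \otimes_{\CB} \CB(G(*),-) \qquad \text{in } \MOD(\CA).
\]
Applying the tensor--hom adjunction with this $\CB$-$\CA$-bimodule then produces
\[
{}_{\CA}(G_{\bullet}(K), M) \cong {}_{\CA}\bigl(K \otimes_{\CB} \CB(G(*),-),\, M\bigr) \cong {}_{\CB}\bigl(K,\, {}_{\CA}(\CB(G(*),-), M)\bigr) = {}_{\CB}(K, G_R(M)),
\]
which is the required adjunction $G_{\bullet} \dashv G_R$.

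The only genuine subtlety is the bookkeeping of left versus right $\CA$- and $\CB$-actions: the adjunctions recalled from \cite{FPN,Mi} are stated for an ${}_{\CA}M_{\CB}$ acting on right $\CA$-modules on one side and for a ${}_{\CB}N_{\CA}$ acting on left $\CA$-modules on the other, so in the right-adjoint calculation one must either pass through $\CA \leftrightarrow \CA^{\op}$ to bring the relevant tensor into the stated framework or, equivalently, verify naturality of the counit and unit directly at the objectwise level. Beyond this careful accounting of variances, no new ingredient beyond Yoneda, co-Yoneda, and the general tensor--hom adjunction is needed, which is why the result of \cite[Proposition 6.1]{Bu} goes through for arbitrary $K$-linear functors $G$ between small $K$-categories.
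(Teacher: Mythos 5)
Your proposal is correct and follows essentially the same route as the paper's own proof: both adjunctions are obtained from the general tensor--hom adjunction, with Yoneda identifying ${}_{\CB}(\CB(-,G(*)),U)\cong G_{\bullet}(U)$ for the left adjoint and co-Yoneda identifying $U\otimes_{\CB}\CB(G(*),-)\cong G_{\bullet}(U)$ for the right adjoint. The only cosmetic difference is that you establish the co-Yoneda identification of $G_{\bullet}$ before applying the adjunction, whereas the paper applies the adjunction first and then invokes co-Yoneda.
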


\begin{proof}  Observe that the bifunctor $\CB(-,G(*)):\CB^{\op}\times\CA\ra\MOD(K)$ is an $\CA$-$\CB$-bimodule and the bifunctor $\CB(G(*),-):\CA^{\op}\times\CB\ra\MOD(K)$ is a $\CB$-$\CA$-bimodule, and hence the functors $G_{L}$ and $G_{R}$ are well-defined.

Assume that $T\in\MOD(\CA)$ and $U\in\MOD(\CB)$. We show that $(G_L,G_\bullet)$ and $(G_\bullet,G_R)$ are adjoint pairs. Indeed, the hom-tensor adjunction gives the following natural isomorphism: $${}_{\CB}(G_{L}(T),U)={}_{\CB}(T\otimes_{\CA}\CB(-,G(*)),U)\cong{}_{\CA}(T,{}_{\CB}(\CB(-,G(*)),U)).$$ The Yoneda lemma yields the natural isomorphism ${}_{\CB}(\CB(-,G(*)),U)\cong U(G(*))=G_\bullet(U)$ and consequently we get the natural isomorphism ${}_{\CB}(G_{L}(T),U)\cong{}_{\CA}(T,G_\bullet(U))$. This shows that $G_L$ is the left adjoint to $G_\bullet$. Similarly, the hom-tensor adjunction yields the following natural isomorphism:

$${}_{\CB}(U,G_R(T))={}_{\CB}(U,{}_{\CA}(\CB(G(*),-),T))\cong{}_{\CA}(U\otimes{}_{\CB\snull}\CB(G(*),-),T).$$ The co-Yoneda lemma gives the natural isomorphism $U\otimes{}_{\CB\snull}\CB(G(*),-)\cong U(G(*))=G_\bullet(U)$ and so we get the natural isomorphism ${}_{\CB}(U,G_R(T))\cong{}_{\CA}(G_\bullet(U),T)$. This shows that $G_R$ is the right adjoint to $G_\bullet$.
\end{proof}

\begin{rem}\label{r1} Let $A$ be a locally bounded $K$-category, $B$ a full subcategory of $A$ and $\iota:B\hookrightarrow A$ the inclusion functor. Then the pull-up functor $\iota_{\bullet}:\MOD(A)\ra\MOD(B)$ along $\iota$ is, rightly, called the \emph{restriction functor}. Since $$B(-,\iota(*))=B(-,*):A^{\op}\times B\ra\MOD(K)$$ and $$B(\iota(*),-)=B(*,-):B^{\op}\times A\ra\MOD(K),$$ we conclude that $$\iota_{L}=?\otimes_{B}B(-,*)\textnormal{ and }\iota_{R}={}_{B}(B(*,-),?).$$ It is known that both functors $\iota_{L},\iota_{R}$ are fully faithful and hence they are called the \emph{embedding functors}. We use the standard notation for these functors, namely $\res_{B}:=\iota_{\bullet}$, $T_{B}:=\iota_{L}$ and $L_{B}:=\iota_{R}$. Assume that $B$ is finite and $\ob(B)=\{x_{1},x_{2},\dots,x_{n}\}$. Then we have an isomorphism $$B(-,*)\cong B(-,x_1)\oplus\dots\oplus B(-,x_n)\cong P_{x_{1}}\oplus\dots\oplus P_{x_{n}}$$ of right $A$-modules and an isomorphism $$B(*,-)\cong B(x_1,-)\oplus\dots\oplus B(x_n,-)\cong D(I_{x_{1}})\oplus\dots\oplus D(I_{x_{n}})$$ of left $A$-modules. If $A$ is also finite, we may view both $A$ and $B$ as bound quiver $K$-algebras. Then there is an idempotent $e\in A$ such that $B=eAe$ and we set the notation: $\res_{B}=\res_{e}=(-)e$, $T_{B}=T_{e}=(-)\otimes_{B}eA$ and $L_{B}=L_{e}={}_{B}(Ae,-)$, consistently with \cite[I.6]{AsSiSk}. \epv
\end{rem}

\begin{rem}\label{r1.1} Theorem \ref{t1} necessarily describes the case of classical Galois covering functors. Namely, assume that $F:R\ra A$ is a Galois $G$-covering. According to \ref{t1}, the pull-up functor $F_{\bullet}:\MOD(A)\ra\MOD(R)$, defined as $(-)\circ F^{\op}$, has the left adjoint given by the functor $$F_{L}=?\otimes_{R}A(-,F(*)):\MOD(R)\ra\MOD(A)$$ and the right adjoint given by the functor $$F_{R}={}_{}(A(F(*),-),?):\MOD(R)\ra\MOD(A).$$  Then it follows from the definition of $F:R\ra A$ that $F_{L}\cong F_{\lambda}$ and $F_{R}\cong F_{\rho}$. Indeed, assume that $M$ is an $R$-module and take $a\in\ob(A)$. Since $F$ is surjective on objects, we get that $a=F(x)$, for some $x\in\ob(R)$. Therefore we obtain: $$F_{L}(M)(a)=F_{L}(M)(F(x))=M\otimes_{R}A(F(x),F(*))\cong M\otimes_{R}(\bigoplus_{g\in G}R(gx,*))\cong$$$$\cong\bigoplus_{g\in G}M\otimes_{R}R(gx,*)\cong\bigoplus_{g\in G}M(gx)=F_{\lambda}(M)(F(x))=F_{\lambda}(M)(a),$$ by the isomorphism $\bigoplus_{g\in G}R(gx,*)\cong A(F(x),F(*))$ an the co-Yoneda lemma. Similar calculations work in the case of $F_{R}\cong F_{\rho}$. We refer to Section 2 of \cite{Bu} for the details, as well as more general considerations.
\end{rem}

Let $F:R\ra A$ be a Galois $G$-covering. Applying the above theorem to the push-down functor $F_{\lambda}:\mod(R)\ra\mod(A)$, we get the following result which we believe initiates the \emph{genral Galois covering theory of functor categories}. In the sequel we set $\CR=\mod(R)$ and $\CA=\mod(A)$. 

\begin{cor}\label{c1} Assume that $F:R\ra A$ is a Galois $G$-covering and let $$\Psi:=(F_{\lambda})_{\bullet}:\MOD(\CA)\ra\MOD(\CR)$$ be the pull-up functor along $F_{\lambda}:\mod(R)\ra\mod(A)$. The functor $$\Phi:=?\otimes_{\CR}[{}_{A}(-,F_\lambda(*))]:\MOD(\CR)\ra\MOD(\CA)$$ is the left adjoint to $\Psi$ and hence right exact. The functor $$\Theta:={}_{\CR}({}_{A}(F_{\lambda}(*),-),?):\MOD(\CR)\ra\MOD(\CA)$$ is the right adjoint to $\Psi$ and hence left exact.
\end{cor}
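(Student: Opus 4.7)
The plan is to derive Corollary \ref{c1} as a direct instance of Theorem \ref{t1} applied to the push-down functor $F_{\lambda}:\mod(R)\ra\mod(A)$, viewed as a $K$-linear functor between the skeletally small $K$-categories $\CR=\mod(R)$ and $\CA=\mod(A)$. Since $\MOD(\CR)$ and $\MOD(\CA)$ are the module categories built over these small $K$-categories, the whole tensor-hom formalism recalled in Section 3.1 applies without modification, and the corollary will follow by specialization with essentially no extra work.

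Concretely, I would invoke Theorem \ref{t1} with the substitutions $\CA\leftarrow\CR$, $\CB\leftarrow\CA$, and $G\leftarrow F_{\lambda}$. The left adjoint to $\Psi=(F_{\lambda})_{\bullet}$ produced by the theorem is then
\[
(F_{\lambda})_{L}=\;?\otimes_{\CR}\CA(-,F_{\lambda}(*))=\;?\otimes_{\CR}[{}_{A}(-,F_{\lambda}(*))],
\]
which is exactly $\Phi$, and the right adjoint is
\[
(F_{\lambda})_{R}={}_{\CR}(\CA(F_{\lambda}(*),-),?)={}_{\CR}({}_{A}(F_{\lambda}(*),-),?),
\]
which is exactly $\Theta$. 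This simultaneously identifies the adjoint pairs $(\Phi,\Psi)$ and $(\Psi,\Theta)$. The exactness statements are then purely formal: as a left adjoint between abelian categories $\Phi$ preserves all colimits and is therefore right exact, and dually $\Theta$ is left exact as a right adjoint.

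The only thing worth verifying by hand is that the two bifunctors $\CA(-,F_{\lambda}(*)):\CA^{\op}\times\CR\ra\MOD(K)$ and $\CA(F_{\lambda}(*),-):\CR^{\op}\times\CA\ra\MOD(K)$ really are, respectively, an $\CR$-$\CA$-bimodule and an $\CA$-$\CR$-bimodule, so that the tensor and hom expressions defining $\Phi$ and $\Theta$ make sense in the sense of Section 3.1. This is immediate from the bifunctoriality of the hom on $\mod(A)$ together with the functoriality of $F_{\lambda}$. There is therefore no substantive obstacle: the entire content of the corollary is packaged in Theorem \ref{t1}, and the role of the proof is simply to match the formal variables to the data of the Galois covering.
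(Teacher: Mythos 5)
Your proposal is correct and matches the paper's own proof: Corollary \ref{c1} is obtained there exactly by applying Theorem \ref{t1} to the functor $F_{\lambda}:\CR\ra\CA$, after observing that ${}_{A}(-,F_{\lambda}(*))$ is an $\CR$-$\CA$-bimodule and ${}_{A}(F_{\lambda}(*),-)$ is an $\CA$-$\CR$-bimodule. The exactness remarks via the standard properties of left and right adjoints are likewise the intended (implicit) argument.
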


\begin{proof} Since $\Phi=(F_{\lambda})_L$ and $\Theta:=(F_{\lambda})_R$, the assertion follows from a direct application of Theorem \ref{t1} to the functor $F_{\lambda}:\CR\ra\CA$. Note that the bifunctor ${}_{A}(-,F_\lambda(*))$ is a $\CR$-$\CA$-bimodule whereas ${}_{A}(F_{\lambda}(*),-)$ is an $\CA$-$\CR$-bimodule.
\end{proof}

\subsection{Galois coverings do not increase Krull-Gabriel dimension}

Assume that $R$ is a locally bounded $K$-category. We write $C\preceq R$ if $C$ is a full subcategory of $R$. Assume $C\preceq R$ and define $\wt{C}_{\triangleleft}$, $\wt{C}_{\triangleright}$ and $\wt{C}$ as the full subcategories of $R$ such that $$\ob(\wt{C}_{\triangleleft})=\{x\in\ob(R)\mid \exists_{c\in\ob(C)\snull}R(c,x)\neq 0\},\textnormal{ } \ob(\wt{C}_{\triangleright})=\{x\in\ob(R)\mid \exists_{c\in\ob(C)\snull}R(x,c)\neq 0\}$$ and $\ob(\wt{C})=\ob(\wt{C}_{\triangleleft})\cup\ob(\wt{C}_{\triangleright})$. Observe that $\wt{C}_{\triangleleft},\wt{C}_{\triangleright},\wt{C}$ are finite full subcategories of $R$ containing $C$.

Assume that $X\in\Mod(R)$, $M\in\mod(R)$ and $C,D\preceq R$ are subcategories such that $\supp(M)\subseteq C\preceq D\preceq R$. Assume that $h:\res_{D}(X)\ra M$ is a $D$-homomorphism. We define a collection of $K$-linear maps $$h^{[0]}=(h^{[0]}_{a}:X(a)\ra M(a))_{a\in\ob(R)}$$ in such a way that $h^{[0]}_{a}=h_{a}$ if $a\in\supp(M)$ and $h^{[0]}_{a}=0$ otherwise. Obviously, $h^{[0]}$ looks like an extension of $h$ by zeros, but it is not an $R$-homomorphism $X\ra M$ in general. The following lemma shows that this is the case if $D=\wt{C}_{\triangleleft}$.


\begin{lm}\label{l1} Assume that $R$ is a locally bounded $K$-category, $X\in\Mod(R)$, $M\in\mod(R)$ and $\supp(M)\subseteq C\preceq R$. Set $D=\wt{C}_{\triangleleft}$. If $h:\res_{D}(X)\ra M$ is a $D$-homomorphism, then $h^{[0]}:X\ra M$ is an $R$-homomorphism.
\end{lm}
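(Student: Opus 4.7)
The plan is to verify directly that the collection $h^{[0]} = (h^{[0]}_a)_{a \in \ob(R)}$ defines a natural transformation $X \ra M$ of functors $R^{\op} \ra \MOD(K)$. Concretely, for every morphism $\alpha \in R(y, x)$ with $x, y \in \ob(R)$, I need to establish the naturality identity
$$h^{[0]}_y \circ X(\alpha) = M(\alpha) \circ h^{[0]}_x.$$
Since $X$ and $M$ are $K$-linear, the case $\alpha = 0$ is trivial, so I may assume $\alpha \neq 0$, which in particular gives $R(y, x) \neq 0$. The verification then reduces to a short case analysis according to whether $x$ and $y$ belong to $\supp(M)$.

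If $y \notin \supp(M)$, then $h^{[0]}_y = 0$ and $M(y) = 0$, so both $M(\alpha) : M(x) \ra M(y) = 0$ and $h^{[0]}_y \circ X(\alpha)$ vanish, and the identity is trivially satisfied. The decisive case is $y \in \supp(M)$. Here $y \in C$, and the nonvanishing of $R(y, x)$ together with the very definition of $\wt{C}_{\triangleleft}$, witnessed by $c = y \in C$, forces $x \in \wt{C}_{\triangleleft} = D$. Since also $y \in C \subseteq D$, the morphism $\alpha$ belongs to the morphism space of $D$, so the naturality of $h : \res_{D}(X) \ra M$ provides $h_y \circ X(\alpha) = M(\alpha) \circ h_x$. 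Now either $x \in \supp(M)$, in which case $h^{[0]}_x = h_x$ and $h^{[0]}_y = h_y$, and the naturality square reduces to this very identity; or $x \notin \supp(M)$, in which case $h^{[0]}_x = 0$ while $M(x) = 0$ forces $M(\alpha) = 0$, so both sides vanish (the left side thanks to the identity just established).

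The main conceptual content, and the only point that requires care, is the observation that the choice $D = \wt{C}_{\triangleleft}$ is precisely tailored to this argument: it guarantees that for any morphism of $R$ whose source lies in $\supp(M) \subseteq C$, the target again lies in $D$, so that $h_x$ is defined and the naturality of $h$ can be applied. A smaller subcategory such as $C$ itself would fail to satisfy this closure property, and a larger one is simply not needed. Beyond this observation the verification is routine, and I do not anticipate any further obstacle.
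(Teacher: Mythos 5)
Your proof is correct and follows essentially the same route as the paper's: a direct verification of the naturality squares by case analysis, with the key step being that a nonzero morphism out of an object of $C$ (hence of $\supp(M)$) has its target in $D=\wt{C}_{\triangleleft}$, so that the naturality of $h$ as a $D$-homomorphism can be invoked. The only difference is cosmetic — you split cases on membership of the target vertex in $\supp(M)$ and assume the morphism nonzero up front, whereas the paper splits on membership in $D$ and derives $\sigma=0$ in the residual case — but the content is identical.
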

\begin{proof} We have to show that for any $a,b\in\ob(R)$ and any morphism $\sigma\in R(a,b)$ the following diagram $$\xymatrix{X(b)\ar[rr]^{X(\sigma)}\ar[d]^{h_{b}^{[0]}}&&{}X(a)\ar[d]^{h_{a}^{[0]}}\\ M(b)\ar[rr]^{M(\sigma)}&& M(a)}$$ commutes, i.e. $h_{a}^{[0]}X(\sigma)=M(\sigma)h_{b}^{[0]}$. We consider several cases. First, observe that if $a\notin\ob(D)$, then $a\notin\supp(M)$, because $D$ contains $C$. This yields $h_{a}^{[0]}=0$ and $M(\sigma)=0$, so the diagram commutes. Thus assume that $a\in\ob(D)$. If $b\in\ob(D)$, then commutativity of the diagram follows from the fact that $h$ is a $D$-homomorphism. Hence assume that $b\notin\ob(D)$. This yields $M(b)=0$ and thus $h_{b}^{[0]}=0$. If $a\notin\ob(C)$, then $M(a)=0$ and $h_{a}^{[0]}=0$, so the diagram commutes. Finally, if $a\in\ob(C)$, then the definition of $D=\wt{C}_{\triangleleft}$ yields $\sigma=0$, because $b\notin\ob(D)$. Hence $X(\sigma)=0$ and since $h_{b}^{[0]}=0$, the diagram also commutes in this case.
\end{proof}

The above lemma is applied in the following Proposition \ref{p1}. This proposition is an important ingredient of the proof of the subsequent Theorem \ref{t2}, stating that the functors $\Phi,\Theta:\MOD(\CR)\ra\MOD(\CA)$ restrict to the categories of finitely presented functors and that these restrictions coincide. Theorem \ref{t2} implies a crucial fact that the functor $\Phi:\CF(R)\ra\CF(A)$ is exact. 

\begin{prop}\label{p1} Assume that $R$ is a locally bounded $K$-category and let $X\in\Mod(R)$. Assume that $M,N\in\mod(R)$ and $f:M\ra N$, $\alpha:X\ra N$ are $R$-homomorphisms. If $\Im{}_{R}(*,\alpha)\subseteq\Im{}_{R}(*,f)$, then $\alpha$ factorizes through $f$.
\end{prop}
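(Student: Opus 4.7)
My plan is to reduce the statement to a computation on a suitable finite full subcategory of $R$, where $X$ becomes finite dimensional and the hypothesis furnishes the factorization essentially by a direct application of Yoneda, and then to invoke Lemma \ref{l1} to transport this factorization back to $R$.

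First, applying the hypothesis to $Y=P_{a}=R(-,a)$ and using the Yoneda lemma, each $v\in X(a)$ corresponds to a morphism $P_{a}\ra X$ whose composition with $\alpha$ must factor through $f$, producing an element $w\in M(a)$ with $f_{a}(w)=\alpha_{a}(v)$. Hence $\alpha_{a}(X(a))\subseteq f_{a}(M(a))$ for every $a\in\ob(R)$, and in particular $\alpha_{a}=0$ whenever $a\notin C:=\supp(M)$. Set $D=\wt{C}_{\triangleleft}$, a finite full subcategory of $R$ containing $C$.

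The heart of the argument is the construction of an auxiliary finite dimensional $R$-submodule $Y\subseteq X$ with $\res_{D}(Y)=\res_{D}(X)$. For each $a\in\ob(D)$ I fix a basis of the finite dimensional space $X(a)$ and assemble the morphisms $P_{a}\ra X$ corresponding via Yoneda to these basis vectors into a single $R$-homomorphism $\psi:P\ra X$, where $P=\bigoplus_{a\in\ob(D)}P_{a}^{\dim_{K}X(a)}$. Since $R$ is locally bounded and $D$ is finite, $P$ is a finite dimensional $R$-module, hence so is $Y:=\Im(\psi)\in\mod(R)$. By construction $Y(a)=X(a)$ for every $a\in\ob(D)$, so the inclusion $\iota:Y\hookrightarrow X$ satisfies $\res_{D}(\iota)=\id_{\res_{D}(X)}$. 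Now applying the hypothesis to $\iota\in{}_{R}(Y,X)$ yields an $R$-homomorphism $\gamma:Y\ra M$ with $f\gamma=\alpha\iota$, and restricting to $D$ produces a $D$-homomorphism $\wt{\delta}:=\res_{D}(\gamma):\res_{D}(X)\ra M$ (we identify $M$ with $\res_{D}(M)$, using $\supp(M)\subseteq D$) satisfying $\res_{D}(f)\circ\wt{\delta}=\res_{D}(\alpha)$.

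Finally, since $\supp(M)\subseteq C$ and $D=\wt{C}_{\triangleleft}$, Lemma \ref{l1} applies to $\wt{\delta}$ and produces an $R$-homomorphism $\delta:=\wt{\delta}^{[0]}:X\ra M$. The equality $f\delta=\alpha$ is then verified objectwise: on $\ob(D)$ it reduces to the relation just obtained, while for $a\notin\ob(D)$ both sides vanish, since $\delta_{a}=0$ by definition of the extension by zeros and $\alpha_{a}=0$ by the first step, as $D\supseteq C=\supp(M)$. The main obstacle to overcome is the naturality of the lift: naive pointwise choices of preimages of $\alpha_{a}(v)$ under $f_{a}$ need not assemble into an $R$-homomorphism. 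The role of the auxiliary module $Y\in\mod(R)$ is precisely to convert the subfunctor hypothesis into one coherent $R$-linear lift on a finite piece, and Lemma \ref{l1} is tailored to transport such a lift back to all of $R$.
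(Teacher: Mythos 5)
Your proof is correct and follows essentially the same route as the paper's: construct a finite dimensional module that agrees with $X$ on $D=\wt{C}_{\triangleleft}$, feed its canonical map to $X$ into the hypothesis to obtain a coherent lift over $D$, and extend that lift by zeros using Lemma \ref{l1}. The only (harmless) differences are that you realize the test module as the submodule of $X$ generated by the spaces $X(a)$, $a\in\ob(D)$ (the image of a map from a finite sum of representables), where the paper uses $\res_{D}(X)\otimes_{D}R(*,-)$, and that your preliminary Yoneda step showing $\alpha$ vanishes off $\supp(M)$ lets you take $C=\supp(M)$ rather than $\supp(M)\cup\supp(N)$.
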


\begin{proof} Observe that the condition $\Im{}_{R}(*,\alpha)\subseteq\Im{}_{R}(*,f)$ states that for any $L\in\mod(R)$ and any $R$-homomorphism $\gamma:L\ra X$ there is $h:L\ra M$ such that $\alpha\gamma=fh$. We show that this yields $\alpha$ factorizes through $f$. For that purpose we construct a suitable full subcategory $D$ of $R$, a module $L\in\mod(R)$ and an $R$-homomorphism $\gamma:L\ra X$ such that $\gamma_{x}$ is an identity if $x\in\ob(D)$, $\alpha_{x}=0$ if $x\notin\ob(D)$ and for any $h:L\ra M$ with $\alpha\gamma=fh$, the homomorphism $\res_{D}(h)^{[0]}:X\ra M$ exists and provides the factorization.

Let $C$ be the full subcategory of $R$ such that $\ob(C)=\supp(M)\cup\supp(N)$. Set $D=\wt{C}_{\triangleleft}$ and $\wt{X}=\res_{D}(X)$. Let $R(*,-)$ be the hom-bifunctor with $*\in R$ and $-\in D$. Denote by $\varphi:\wt{X}\otimes_{D}R(*,-)\ra X$ the $R$-homomorphism such that $\varphi(x\otimes f)=x\cdot f$ where $x\in\wt{X}$ is treated as an element of the module $X$. Note that for $r\in\ob(R)$ and $a\in\ob(D)$ we have $R(r,a)\neq 0$ only if $r\in D_{\triangleright}$, so the module $\wt{X}\otimes_{D}R(*,-)$ is finite dimensional with $\supp(\wt{X}\otimes_{D}R(*,-))\subseteq D_{\triangleright}$. Moreover, if $f\in R(r,a)$ with $r,a\in\ob(D)$, then $\varphi(x\otimes f)=\varphi((x\cdot f)\otimes 1_{D})=x\cdot f$  which shows that $\res_{D}(\varphi)$ is an isomorphism. In other words, $$\varphi_{a}:\wt{X}\otimes_{D}R(a,-)\ra X(a)=\wt{X}(a)$$ is an isomorphism of vector spaces, for any $a\in\ob(D)$. We conclude that there is a module $L\in\mod(R)$ and an $R$-isomorphism $\iota:L\ra\wt{X}\otimes_{D}R(*,-)$ such that if $\gamma=\varphi\iota:L\ra X$, then $\gamma_{a}$ is an identity, for any $a\in\ob(D)$.\footnote{The module $L$ can be obtained by appropriate base changes of domains and codomains of linear maps on the arrows of $\wt{X}\otimes_{D}R(*,-)$.} In particular, we get $\res_{D}(L)=\res_{D}(X)=\wt{X}$.

We know by assumption that there is an $R$-homomorphism $h:L\ra M$ such that $\alpha\gamma=fh$. Observe that $$\res_{D}(h):\res_{D}(L)=\res_{D}(X)=\wt{X}\ra M$$ is a $D$-homomorphism with $D=\wt{C}_{\triangleleft}$ and $\supp(M)\subseteq C$. Thus by Lemma \ref{l1} there exists an $R$-homomorphism $h^{'}=\res_{D}(h)^{[0]}:X\ra M$ and we show that $\alpha=fh^{'}$. Indeed, if $x\in\ob(D)$, then $$(fh')_{x}=f_{x}h'_{x}=f_{x}h_{x}=(fh)_{x}=(\alpha\gamma)_{x}=\alpha_{x}\gamma_{x}=\alpha_{x}.$$ On the other hand, if $x\notin\ob(D)$, then $\alpha_{x}=0$ and $h'_{x}=0$ since $x$ is not contained in $\supp(N)\cup\supp(M)$. Therefore $$(fh')_{x}=f_{x}h'_{x}=0=\alpha_{x}$$ and the proof is complete.
\end{proof}


\begin{thm}\label{t2} Assume that $F:R\ra A$ is a Galois $G$-covering. Then the following assertions hold.
\begin{enumerate}[\rm(1)]
  \item We have $\Phi(\CF(R))\subseteq\CF(A)$, $\Theta(\CF(R))\subseteq\CF(A)$ and $\Phi|_{\CF(R)}\cong\Theta|_{\CF(R)}$. The functors $\Phi,\Theta:\CF(R)\ra\CF(A)$ are exact functors.
  \item The functors $\Phi,\Theta:\CF(R)\ra\CF(A)$ are faithful.
\end{enumerate}
\end{thm}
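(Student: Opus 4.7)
The plan is to reduce all computations to the representable functors in $\CF(R)$ and to invoke Proposition~\ref{p1} for the delicate part. The co-Yoneda lemma applied to the tensor product gives, for every $M\in\mod(R)$ and $a\in\mod(A)$, a natural isomorphism
\[
\Phi({}_R(-,M))(a) \;=\; {}_R(-,M)\otimes_\CR{}_A(a,F_\lambda(*)) \;\cong\; {}_A(a,F_\lambda(M)),
\]
which is functorial in $f\colon M\to N$ and identifies $\Phi({}_R(-,f))$ with ${}_A(-,F_\lambda(f))$. Applying $\Phi$, which is right exact as a left adjoint, to a finite presentation ${}_R(-,M)\xrightarrow{{}_R(-,f)}{}_R(-,N)\to F\to 0$ of an arbitrary $F\in\CF(R)$ then yields $\Phi(F)\cong\Coker{}_A(-,F_\lambda(f))\in\CF(A)$, so $\Phi(\CF(R))\subseteq\CF(A)$.

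For $\Theta$, the aim is the parallel formula $\Theta({}_R(-,M))\cong{}_A(-,F_\lambda(M))$. Given $\phi\colon a\to F_\lambda(M)$, the assignment
\[
\eta_\phi^X(\psi)\;=\;(\phi\psi)_e,\qquad \psi\colon F_\lambda(X)\to a,
\]
where $(\phi\psi)_e\in{}_R(X,M)$ denotes the $g=e$ component of $\phi\psi\in{}_A(F_\lambda(X),F_\lambda(M))\cong\bigoplus_g{}_R(X,{}^gM)$, defines a natural transformation $\eta_\phi\colon{}_A(F_\lambda(-),a)\to{}_R(-,M)$; naturality in $X$ is immediate from the fact that the Galois covering isomorphism sends $F_\lambda(\sigma)$ to the $e$-summand. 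The main obstacle, and the point at which Proposition~\ref{p1} is essential, is surjectivity of $\phi\mapsto\eta_\phi$: given an arbitrary natural transformation $\eta\colon{}_A(F_\lambda(-),a)\to{}_R(-,M)$, one has to recover a single $A$-homomorphism $\phi\colon a\to F_\lambda(M)$ inducing $\eta$. I would carry this out by using the adjunction $F_\lambda\dashv F_\bullet$ to rewrite $\eta$ as a natural transformation ${}_R(-,F_\bullet(a))|_{\mod(R)}\to{}_R(-,M)$ and then invoking Proposition~\ref{p1}, which supplies precisely the module-theoretic factorization needed to assemble the pointwise data carried by $\eta$ into a single homomorphism $F_\bullet(a)\to M$; adjunction converts this back into the desired $\phi$. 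The same bijection applied to a presentation yields $\Theta(F)\cong\Coker{}_A(-,F_\lambda(f))\cong\Phi(F)$, so $\Theta(\CF(R))\subseteq\CF(A)$ and $\Theta|_{\CF(R)}\cong\Phi|_{\CF(R)}$; exactness of both then follows from the coincidence of a right exact and a left exact functor.

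For assertion~(2), faithfulness of $\Phi$ reduces to the fact that a morphism $\mu\colon\Coker{}_R(-,f)\to\Coker{}_R(-,f')$ in $\CF(R)$, represented by a chain map $(m,n)$ with $n\colon N\to N'$ and $f'\colon M'\to N'$, vanishes if and only if $n$ factors through $f'$ in $\mod(R)$. If $\Phi(\mu)=0$, then $F_\lambda(n)$ factors through $F_\lambda(f')$ via some $h\colon F_\lambda(N)\to F_\lambda(M')$. Decomposing $h=(h_g)_{g\in G}$ under ${}_A(F_\lambda(N),F_\lambda(M'))\cong\bigoplus_g{}_R(N,{}^gM')$ and computing the induced decomposition of the identity $F_\lambda(f')h=F_\lambda(n)$ inside $\bigoplus_g{}_R(N,{}^gN')$ forces $f'h_e=n$ (and ${}^g(f')h_g=0$ for $g\neq e$), giving the required factorization in $\mod(R)$. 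Hence $\mu=0$, and faithfulness of $\Theta$ now follows from $\Phi\cong\Theta$ on $\CF(R)$.
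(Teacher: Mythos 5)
Your treatment of $\Phi$ and of assertion (2) is essentially the paper's own argument: co-Yoneda identifies $\Phi$ on representables with ${}_A(-,F_\lambda(-))$, right exactness of the left adjoint propagates this to cokernels, and for faithfulness the decomposition of $h$ under ${}_A(F_\lambda(N),F_\lambda(M'))\cong\bigoplus_g{}_R(N,{}^gM')$ extracts the $e$-component $h_e$ with $f'h_e=n$, exactly as in the paper (which phrases it via $F_\bullet(\alpha)=[\alpha_{h,g}]$ and $f_2\alpha_{e,e}=h$). Those parts are fine.

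The gap is in the $\Theta$ half of assertion (1). Even granting your computation $\Theta({}_R(-,M))\cong{}_A(-,F_\lambda(M))$ on representables, the step ``the same bijection applied to a presentation yields $\Theta(F)\cong\Coker{}_A(-,F_\lambda(f))$'' does not follow: $\Theta$ is a \emph{right} adjoint, hence only left exact, so applying it to ${}_R(-,M)\to{}_R(-,N)\to F\to 0$ gives no control over $\Theta(F)$ --- there is no reason for $\Theta({}_R(-,N))\to\Theta(F)$ to be an epimorphism, let alone for $\Theta(F)$ to be the cokernel of $\Theta({}_R(-,f))$. That $\Theta$ nevertheless agrees with $\Phi$ on all of $\CF(R)$ is precisely the content of the theorem, and it cannot be obtained from the values on projectives alone. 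The paper instead computes $\Theta(T)$ directly for the cokernel $T$: it rewrites $\Theta(T)(X)={}_{\CR}({}_R(*,F_\bullet(X)),T)$ via the adjunction, and compares this with $\Coker_R(F_\bullet(X),f)\cong\Phi(T)(X)$ by an explicit map $\varphi^T_X(\ov{\alpha})=\pi\circ{}_R(*,\alpha)$, whose surjectivity uses projectivity of ${}_R(*,F_\bullet(X))$ and whose \emph{injectivity} is exactly where Proposition \ref{p1} enters (if $\pi\circ{}_R(*,\alpha)=0$ then $\Im{}_R(*,\alpha)\subseteq\Im{}_R(*,f)$, so $\alpha$ factors through $f$). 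Relatedly, your invocation of Proposition \ref{p1} for the \emph{surjectivity} of $\phi\mapsto\eta_\phi$ is miscast: as stated, the proposition is a factorization criterion for a given homomorphism $\alpha:X\to N$ with $X\in\Mod(R)$, not a representability statement for natural transformations out of ${}_R(-,F_\bullet(a))|_{\mod(R)}$; the latter requires the construction inside the proof of \ref{p1} (the finite-dimensional module $L$, the map $\gamma$, and the zero-extension of Lemma \ref{l1}), not the proposition itself. To close the argument you must either carry out the paper's direct computation of $\Theta$ on cokernels, or supply some other reason why $\Theta$ preserves the relevant cokernels.
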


\begin{proof} $(1)$ Assume that $M\in\mod(R)$. Viewing the right $\CR$-module ${}_{R}(*,M)\in\CF(R)$ as the representable functor in the co-Yoneda isomorphism, we get the following natural isomorphism: $$\Phi({}_{R}(*,M))={}_{R}(*,M)\otimes_{\CR}[{}_{A}(-,F_\lambda(*))]\cong{}_{A}(-,F_\lambda(M)).$$ The same works if we replace the module $M$ with any $R$-module homomorphism $f$. It follows that if $T=\Coker_{R}(*,f)$, then $\Phi(T)\cong\Coker_{A}(-,F_{\lambda}(f))$, because $\Phi$ is right exact. Hence $\Phi(\CF(R))\subseteq\CF(A)$. To show that $\Theta(\CF(R))\subseteq\CF(A)$ as well, we prove that there is a natural equivalence of functors $\varphi^{T}:\Phi(T)\ra\Theta(T)$, for any $T\in\CF(R)$. Moreover, we show that this equivalence extends to a natural equivalence $$\varphi=(\varphi^{T})_{T\in\CF(R)}:\Phi|_{\CF(R)}\ra\Theta|_{\CF(R)}.$$ In what follows we fix a unique projective presentation, for any finitely presented functor $T\in\CF(R)$. Assume that $f:M\ra N\in\mod(R)$ and set $T=\Coker_{R}(*,f)$, that is, we have an exact sequence of functors of the form $${}_{R}(*,M)\xrightarrow{{}_{R}(*,f)}{}_{R}(*,N)\xrightarrow{\pi} T\ra 0.$$ Then we obtain $$\Phi(T)=\Coker_{A}(-,F_{\lambda}(f))\cong \Coker_{A}(-,F_{\rho}(f))\cong\Coker_{R}(F_{\bullet}(-),f)$$ since $F_{\lambda}=F_{\rho}$ on $\mod(R)$. Furthermore, we get $$\Theta(T)={}_{\CR}({}_{A}(F_{\lambda}(*),-),T)\cong {}_{\CR}({}_{R}(*,F_{\bullet}(-)),T)$$ and we show below that there exists a natural isomorphism of functors $$\Coker_{R}(F_{\bullet}(-),f)\cong{}_{\CR}({}_{R}(*,F_{\bullet}(-)),T).\footnote{Observe that $F_{\bullet}(\mod(A))\subseteq\Mod(R)$ so we cannot conclude that $\Coker_{R}(F_{\bullet}(-),f)\cong T(F_{\bullet}(-))$ nor can we apply the Yoneda lemma to state that ${}_{\CR}({}_{R}(*,F_{\bullet}(-)),T)\cong T(F_{\bullet}(-))$.}$$ Indeed, let us define the natural transformation of functors $$\varphi^{T}=(\varphi^{T}_{X})_{X\in\mod(A)}:\Coker_{R}(F_{\bullet}(-),f)\ra{}_{\CR}({}_{R}(*,F_{\bullet}(-)),T)$$ in the following way. Assume that $X\in\mod(A)$ and $\alpha:F_{\bullet}(X)\ra N$ is an $R$-homomorphism. We define the $K$-linear map $$\varphi^{T}_{X}:\Coker_{R}(F_{\bullet}(X),f)\cong\frac{{}_{R}(F_{\bullet}(X),N)}{\Im{}_{R}(F_{\bullet}(X),f)}\ra{}_{\CR}({}_{R}(*,F_{\bullet}(X)),T)$$ by the formula: $$\varphi^{T}_{X}(\alpha+\Im{}_{R}(F_{\bullet}(X),f))=\pi\circ{}_{R}(*,\alpha).$$ First we show that $\varphi^{T}_{X}$ is well-defined. Indeed, assume that we have $\alpha-\alpha'\in\Im{}_{R}(F_{\bullet}(X),f)$, for some $\alpha':F_{\bullet}(X)\ra N$. It follows that $\alpha-\alpha'=f\beta$, for some $\beta:F_{\bullet}(X)\ra M$ and thus $$\Im{}_{R}(*,\alpha-\alpha')=\Im{}_{R}(*,f\beta)\subseteq\Im{}_{R}(*,f)=\Ker(\pi).$$ Hence $0=\pi\circ{}_{R}(*,\alpha-\alpha')=\pi\circ{}_{R}(*,\alpha)-\pi\circ{}_{R}(*,\alpha')$ which yields $\pi\circ{}_{R}(*,\alpha)=\pi\circ{}_{R}(*,\alpha')$ and so $\varphi^{T}_{X}$ is a well-defined $K$-linear map.

We show that $\varphi^{T}$ is a natural transformation. Assume that $g:X\ra Y\in\mod(A)$ and observe that for any $R$-homomorphism $\alpha:F_{\bullet}(Y)\ra N$ we have: $$\varphi^{T}_{X}(\alpha F_{\bullet}(g)+\Im_{R}(F_{\bullet}(X),f))=\pi\circ {}_{R}(*,\alpha F_{\bullet}(g))=$$$$=(\pi\circ {}_{R}(*,\alpha))\circ {}_{R}(*,F_{\bullet}(g))=\varphi^{T}_{Y}(\alpha+\Im_{R}(F_{\bullet}(Y),f))\circ{}_{R}(*,F_{\bullet}(g)).$$ It is easy to see that these calculations yield $\varphi^{T}$ is natural. Now we show that $\varphi^{T}$ is a natural equivalence. Assume that $X\in\mod(A)$. We prove that $\varphi^{T}_{X}$ is an isomorphism of vector spaces. Let $\eta:{}_{R}(*,F_{\bullet}(X))\ra T$ be a homomorphism of functors. Since the functor ${}_{R}(*,F_{\bullet}(X))$ is projective, we conclude that there is an $R$-homomorphism $\alpha:F_{\bullet}(X)\ra N$ such that $\eta=\pi\circ{}_{R}(*,\alpha)$. This yields $\varphi^{T}_{X}$ is an epimorphism. To show that $\varphi^{T}_{X}$ is a monomorphism as well, assume that $\alpha:F_{\bullet}(X)\ra N$ is an $R$-homomorphism. Observe that if $\pi\circ{}_{R}(*,\alpha)=0$, then $$\Im_{R}(*,\alpha)\subseteq\Ker(\pi)=\Im_{R}(*,f)$$ and Proposition \ref{p1} yields $\alpha$ factorizes through $f$, so $\alpha+\Im{}_{R}(F_{\bullet}(X),f))=0$. This shows that $\varphi^{T}_{X}$ is a monomorphism and, consequently, an isomorphism. 

We conclude that $\Phi(T)\cong\Theta(T)$ and thus $\Theta(\CF(R))\subseteq\CF(A)$, because $\Phi(\CF(R))\subseteq\CF(A)$, as shown earlier. We shall denote the functors $\Phi|_{\CF(R)},\Theta|_{\CF(R)}:\CF(R)\ra\CF(A)$ as $\Phi,\Theta:\CF(R)\ra\CF(A)$, respectively.

In the final step of the proof we show that $\varphi=(\varphi^{T})_{T\in\CF(R)}:\Phi\ra\Theta$ is a natural transformation of functors. Thus assume that  $T_{i}=(T_{i},f_{i},\pi_{i})$ where $f_{i}:M_{i}\ra N_{i}$, for $i=1,2$. That is, we have exact sequences of functors of the form $${}_{R}(*,M_{i})\xrightarrow{{}_{R}(*,f_{i})}{}_{R}(*,N_{i})\xrightarrow{\pi} T_{i}\ra 0.$$ Let $\iota:T_{1}\ra T_{2}$ be a morphism of functors. We have to show that the following diagram commutes $$\xymatrix{\Coker_{R}(F_{\bullet}(-),f_{1})\cong\Phi(T_{1})\ar[dd]^{\Phi(\iota)}\ar[rr]^{\varphi^{T_{1}}}&&\Theta(T_{1})\cong
{}_{\CR}({}_{R}(*,F_{\bullet}(-)),T_{1})\ar[dd]^{\Theta(\iota)}\\\\\Coker_{R}(F_{\bullet}(-),f_{2})\cong
\Phi(T_{2})\ar[rr]^{\varphi^{T_{2}}}&&\Theta(T_{2})\cong{}_{\CR}({}_{R}(*,F_{\bullet}(-)),T_{2}),}$$ that is, $\varphi_{X}^{T_{2}}\circ\Phi(\iota)_{X}=\Theta(\iota)_{X}\circ\varphi_{X}^{T_{1}}$, for any $X\in\mod(A)$. Assume that $\iota:T_{1}\ra T_{2}$ is defined by $g:M_{1}\ra M_{2}$ and $h:N_{1}\ra N_{2}$. In particular, we have $\iota\pi_{1}=\pi_{2}\circ{}_{R}(*,h)$. The $K$-linear homomorphism $$\Phi(\iota)_{X}:\Coker_{R}(F_{\bullet}(X),f_{1})\ra\Coker_{R}(F_{\bullet}(X),f_{2})$$ is given by the formula $$\Phi(\iota)_{X}(\alpha+\Im_{R}(F_{\bullet}(X),f_{1}))=h\alpha+\Im_{R}(F_{\bullet}(X),f_{2}),$$ for any $R$-homomorphism $\alpha:F_{\bullet}(X)\ra N_{1}$. The $K$-linear homomorphism $\Theta(\iota)$ is induced by $\iota$, that is, $\Theta(\iota)=\iota\circ(-)$. Assume now that $\alpha:F_{\bullet}(X)\ra N_{1}$ is an $R$-homomorphism. For simplicity, set $\ov{\alpha}=\alpha+\Im_{R}(F_{\bullet}(X),f_{1})$ and $\ov{h\alpha}=h\alpha+\Im_{R}(F_{\bullet}(X),f_{2})$. Then the following equalities hold: $$(\varphi_{X}^{T_{2}}\circ\Phi(\iota)_{X})(\ov{\alpha})=\varphi_{X}^{T_{2}}(\ov{h\alpha})=
\pi_{2}\circ{}_{R}(*,h\alpha)=$$$$=\pi_{2}\circ({}_{R}(*,h)\circ{}_{R}(*,\alpha))=(\pi_{2}\circ{}_{R}(*,h))\circ{}_{R}(*,\alpha)=
(\iota\pi_{1})\circ{}_{R}(*,\alpha)=\iota(\pi_{1}\circ{}_{R}(*,\alpha))=$$$$=\Theta_{X}(\pi_{1}\circ{}_{R}(*,\alpha))=
\Theta_{X}(\varphi_{X}^{T_{1}}(\ov{\alpha}))=(\Theta_{X}\circ\varphi_{X}^{T_{1}})(\ov{\alpha})$$ and hence the above diagram commutes. This implies that $\varphi=(\varphi_{T})_{T\in\CF(R)}$ is a natural transformation of functors and thus an equivalence, because all the $\varphi_{T}$ are equivalences. Since $\Phi$ is right exact whereas $\Theta$ is left exact (see Corollary \ref{c1}) we conclude that $\Phi\cong\Theta$ is exact on $\CF(R)$.

$(2)$ Assume that $T_{1},T_{2}\in\CF(R)$ and $\iota:T_{1}\ra T_{2}$ is a morphism of functors such that $\Phi(\iota)=0$. We show that this yields $\iota=0$. Indeed, assume that $T_{i}=(T_{i},f_{i},\pi_{i})$, $f_{i}:M_{i}\ra N_{i}$, for $i=1,2$. Moreover, let $\iota$ be defined by $g:M_{1}\ra M_{2}$ and $h:N_{1}\ra N_{2}$. This means that we have the following commutative diagram with exact rows: $$\xymatrix{{}_{R}(*,M_{1})\ar[rr]^{{}_{R}(*,f_{1})}\ar[dd]^{{}_{R}(*,g)}&&{}_{R}(*,N_{1})\ar[dd]^{{}_{R}(*,h)}\ar[rr]^{\pi_{1}}&&T_{1}\ar[dd]^{\iota}\ar[rr]&&0\\\\
{}_{R}(*,M_{2})\ar[rr]^{{}_{R}(*,f_{2})}&&{}_{R}(*,N_{2})\ar[rr]^{\pi_{2}}&&T_{2}\ar[rr]&&0.}$$ If $\Phi(\iota)=0$, then $$0=\Phi(\iota)\Phi(\pi_{1})=\Phi(\iota\pi_{1})=\Phi(\pi_{2}\circ{}_{R}(*,h))=\Phi(\pi_{2})\circ\Phi({}_{R}(*,h))$$ which yields $$\Im(\Phi({}_{R}(*,h)))\subseteq\Ker\Phi(\pi_{2})=\Im(\Phi({}_{R}(*,f_{2})))$$ and thus $\Im({}_{A}(-,F_{\lambda}(h)))\subseteq\Im({}_{A}(-,F_{\lambda}(f_{2})))$. Since the functor ${}_{A}(-,F_{\lambda}(N_{1}))$ is projective, we conclude there is an $A$-homomorphism $\alpha:F_{\lambda}(N_{1})\ra F_{\lambda}(M_{2})$ such that the following diagram is commutative with exact rows:$$\xymatrix{{}_{A}(-,F_{\lambda}(M_{1}))\ar[rr]^{{}_{A}(-,F(f_{1}))}\ar[dd]^{{}_{A}(-,F_{\lambda}(g))}&&{}_{A}(-,F_{\lambda}(N_{1}))\ar[dd]^{{}_{A}(-,F_{\lambda}(h))}\ar[rr]^{\Phi(\pi_{1})}\ar[ddll]^{{}_{A}(-,\alpha)}&&\Phi(T_{1})\ar[dd]^{\Phi(\iota)}\ar[rr]&&0\\\\
{}_{A}(-,F_{\lambda}(M_{2}))\ar[rr]^{{}_{A}(-,F_{\lambda}(f_{2}))}&&{}_{A}(-,F(N_{2}))\ar[rr]^{\Phi(\pi_{2})}&&\Phi(T_{2})\ar[rr]&&0.}$$ Consequently $F_{\lambda}(f_{2})\alpha=F_{\lambda}(h)$, so $F_{\bullet}(F_{\lambda}(f_{2})\alpha)=F_{\bullet}(F_{\lambda}(h))$ and thus we get $$(\bigoplus_{g\in G}{}^{g}f_{2})F_{\bullet}(\alpha)=\bigoplus_{g\in G}{}^{g}h.$$ Assuming $$F_{\bullet}(\alpha)=[\alpha_{h,g}]:\bigoplus_{g\in G}{}^{g}N_{1}\ra\bigoplus_{h\in G}{}^{h}M_{2}$$ we obtain $f_{2}\alpha_{e,e}=h$ which yields ${}_{R}(*,f_{2})\circ{}_{R}(*,\alpha_{e,e})={}_{R}(*,h)$. This in turn implies that $\iota=0$. Hence $\Phi:\CF(R)\ra\CF(A)$ is a faithful functor and the same holds for the functor $\Theta:\CF(R)\ra\CF(A)$, by $(1)$. \end{proof}

The following theorem is one of the main results of the paper. In a slogan form, the theorem states that \emph{Galois $G$-coverings do not increase the Krull-Gabriel dimension}.

\begin{thm}\label{t3} Assume that $F:R\ra A$ is a Galois $G$-covering. Then $\KG(R)\leq\KG(A)$. In particular:
\begin{enumerate}[\rm(1)]
  \item If $\KG(R)$ is undefined, then $\KG(A)$ is undefined.
  \item If $\KG(A)$ is finite, then $\KG(R)$ is finite.
\end{enumerate}
\end{thm}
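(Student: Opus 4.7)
The plan is to apply Lemma \ref{0l1}(1) directly to the restricted functor $\Phi:\CF(R)\ra\CF(A)$. By Theorem \ref{t2}(1), the functor $\Phi$ indeed lands in $\CF(A)$ and, being equivalent to $\Theta|_{\CF(R)}$, is both right exact (as a left adjoint, by Corollary \ref{c1}) and left exact (as a right adjoint, by the same corollary), hence exact between the abelian categories $\CF(R)$ and $\CF(A)$. By Theorem \ref{t2}(2), $\Phi:\CF(R)\ra\CF(A)$ is also faithful. Therefore the hypothesis of Lemma \ref{0l1}(1) is satisfied.

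Applying Lemma \ref{0l1}(1) yields $\KG(\CF(R))\leq\KG(\CF(A))$. Since by definition $\KG(R)=\KG(\CF(R))$ and $\KG(A)=\KG(\CF(A))$, this is precisely the desired inequality $\KG(R)\leq\KG(A)$.

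For the two numbered consequences, I would use the convention from Section 2.3 that $\alpha<\infty$ for every ordinal $\alpha$. For (1), if $\KG(R)=\infty$, then the inequality $\infty=\KG(R)\leq\KG(A)$ forces $\KG(A)=\infty$. For (2), if $\KG(A)\in\NN$, then $\KG(R)\leq\KG(A)$ is bounded by a natural number and hence lies in $\NN$.

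There is no real obstacle in this step: all of the substance is contained in Theorem \ref{t2}, whose proof uses Proposition \ref{p1} together with the co-Yoneda identification of $\Phi$ on representables in order to establish the key coincidence $\Phi|_{\CF(R)}\cong\Theta|_{\CF(R)}$. Once Theorem \ref{t2} is in hand, Theorem \ref{t3} is a one-line consequence of the standard fact recorded in Lemma \ref{0l1}(1).
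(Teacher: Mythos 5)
Your proposal is correct and follows exactly the paper's own argument: Theorem \ref{t2} gives that $\Phi:\CF(R)\ra\CF(A)$ is exact and faithful, and Lemma \ref{0l1}(1) then yields $\KG(R)\leq\KG(A)$, with the two numbered consequences being immediate from the conventions on $\infty$. Nothing is missing.
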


\begin{proof} It follows from Theorem \ref{t2} that $\Phi:\CF(R)\ra\CF(A)$ is exact and faithful, so $\KG(R)\leq\KG(A)$ follows from \ref{0l1}.
\end{proof}

\section{Covering theory for finitely presented functors. The dense case}

Assume that $F:R\ra A$ is a Galois $G$-covering. This section deals with further properties of the induced functor $\Phi:\CF(R)\ra\CF(A)$. The first main result is Theorem \ref{t4} which states that if the group $G$ is torsion-free, then the functor $\Phi:\CF(R)\ra\CF(A)$ is a Galois $G$-precovering of functor categories. This is a vast generalization of \cite[Theorem 5.5]{P4} where it is shown that the functor $\Phi:\CF(R)\ra\CF(A)$ is a Galois $G$-precovering, under the assumption that $R$ is locally support-finite. Then we study the special case when the push-down functor $F_{\lambda}:\mod(R)\ra\mod(A)$ is dense. Theorem \ref{t6} is the final result of this part of the section, see also Theorem \ref{t5} and Corollary \ref{c2}. We show in \ref{t6} that in this case the functors $\Phi,\Theta:\MOD(\CR)\ra\MOD(\CA)$ have descriptions analogous to the classical covering functors, see Section 2.2. In particular, $\Phi$ is a subfunctor of $\Theta$. In Remark \ref{r3} we discuss equivalent definitions of $\Phi:\CF(R)\ra\CF(A)$ under the assumption that $F_{\lambda}$ is dense. The section ends with a reminder on the important special case when $R$ is locally support-finite and $G$ is torsion-free, hence $F_\lambda$ is dense. Indeed, we prove in \cite{P4,P6} that then the Krull-Gabriel dimension is preserved by Galois $G$-coverings. We believe that recalling these results, together with sketches of the proofs and some additional comments, is of benefit to the reader. It is naturally also within the scope of this section.

\subsection{The functor $\Phi:\CF(R)\ra\CF(A)$ as a precovering}

The action of $G$ on $\mod(R)$ induces an action of $G$ on $\MOD(\CR)$. Indeed, given a functor $T\in\MOD(\CR)$ and $g\in G$ we define $gT\in\MOD(\CR)$ to be as follows: $(gT)(X)=T({}^{g^{-1}}X)$ and $(gT)(f)=T({}^{g^{-1}}f)$, for any module $X\in\mod(R)$ and homomorphism $f\in\mod(R)$. Furthermore, given a morphism of functors $\iota:T_{1}\ra T_{2}$ and $g\in G$ we define $g\iota:gT_{1}\ra gT_{2}$ as $(g\iota)_{X}=\iota_{X^{-g}}$, for any $X\in\mod(R)$.

It is easy to see that the above is indeed an action of $G$ on $\MOD(\CR)$. We recall in the following proposition that this action restricts to $\CF(R)$ and that it is free, provided $G$ is torsion-free. The proposition summarizes some facts proved already in \cite{P4}. Namely, the assertions $(2)$ and $(3)$ correspond to \cite[Lemma 5.1]{P4} and \cite[Lemma 5.4]{P4}, respectively, whereas $(1)$ is implicitly contained in the proof of \cite[Theorem 5.5 (1)]{P4}.   

\begin{prop}\label{p2} Assume that $T=\Coker{}_{R}(*,f)\in\CF(R)$, for some $R$-homomorphism $f:M\ra N$ and $T\neq 0$. The following assertions hold.
\begin{enumerate}[\rm(1)]
  \item We have $gT\cong\Coker{}_{R}(*,{}^{g}f)\in\CF(R)$, for any $g\in G$ and hence the action of $G$ on $\MOD(\CR)$ restricts to $\CF(R)$.
  \item If $X\in\mod(R)$, then $T({}^{g}X)\neq 0$ only for finite number of $g\in G$.
  \item If the group $G$ is torsion-free, then $G$ acts freely on $\CF(R)$.
\end{enumerate}
\end{prop}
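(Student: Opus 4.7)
The plan is to handle the three assertions in order, each reducing to a short calculation or a clean consequence of the previous one.

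For (1), I would apply the $G$-action directly to the chosen projective presentation. Starting from
\[ {}_{R}(*,M) \xrightarrow{{}_{R}(*,f)} {}_{R}(*,N) \to T \to 0, \]
and using the natural isomorphism ${}_{R}({}^{g^{-1}}X,Y) \cong {}_{R}(X,{}^{g}Y)$ (induced by the $G$-action on morphism spaces and natural in both arguments), one obtains the identification $gT(X) = T({}^{g^{-1}}X) \cong \Coker({}_{R}(X,{}^{g}f))$, natural in $X$. Since ${}^{g}f\colon {}^{g}M \to {}^{g}N$ is again a homomorphism in $\mod(R)$, this exhibits $gT$ as the cokernel of ${}_{R}(*,{}^{g}f)$, so $gT \in \CF(R)$ and the action restricts as claimed.

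For (2), the key observation is that a nonzero $R$-homomorphism $Y \to Z$ forces $\supp(Y) \cap \supp(Z) \neq \emptyset$, since some component $Y(y) \to Z(y)$ must itself be nonzero. Applied with $Y = {}^{g}X$ and $Z = N$, and noting that $\supp({}^{g}X) = g\cdot\supp(X)$, the condition ${}_{R}({}^{g}X,N) \neq 0$ forces $g\cdot x_0 = y_0$ for some $x_0 \in \supp(X)$ and $y_0 \in \supp(N)$. Because $G$ acts freely on $\ob(R)$ and both supports are finite, only finitely many $g \in G$ can satisfy such an equality. Since $T({}^{g}X)$ is a quotient of ${}_{R}({}^{g}X,N)$, it is likewise nonzero for only finitely many $g$.

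For (3), suppose $gT \cong T$ with $T \neq 0$ and pick $X \in \mod(R)$ with $T(X) \neq 0$. Iterating the isomorphism in both directions yields $T({}^{g^{-k}}X) = (g^{k}T)(X) \cong T(X) \neq 0$ for every $k \in \ZZ$. If $g \neq 1$, then torsion-freeness forces $g$ to have infinite order, so the elements $g^{-k}$ are pairwise distinct in $G$, contradicting (2). Hence $g = 1$. The technical crux is the support argument in (2); parts (1) and (3) are then formal consequences, so I anticipate no obstacle requiring heavier machinery.
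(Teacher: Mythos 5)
Your proposal is correct and follows essentially the same route as the paper's proof: part (1) via the natural isomorphism ${}_{R}({}^{g^{-1}}X,Y)\cong{}_{R}(X,{}^{g}Y)$ applied to the projective presentation, part (2) via the support-intersection argument using that $G$ acts freely on $\ob(R)$ and that $T({}^{g}X)$ is a quotient of ${}_{R}({}^{g}X,N)$, and part (3) by iterating $gT\cong T$ and invoking torsion-freeness together with (2). No gaps.
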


\begin{proof} $(1)$ Assume that $g\in G$ and $X,Y\in\mod(R)$ are arbitrary modules. Since we have $$(g{}_{R}(*,X))(Y)={}_{R}({}^{g^{-1}}Y,X)\cong{}_{R}(Y,{}^{g}X)$$ and the later isomorphism is natural, we conclude that there is a natural isomorphism $g({}_{R}(*,X))\cong{}_{R}(*,{}^{g}X)$. This yields $g({}_{R}(*,f))\cong{}_{R}(*,{}^{g}f)$ and so we conclude $$gT=g(\Coker{}_{R}(*,f))\cong\Coker{}_{R}(*,{}^{g}f)\in\CF(R).$$

$(2)$ Note that the vector space $T({}^{g}X)$ is a quotient of ${}_{R}({}^{g}X,N)$, for any $g\in G$, and there is only a finite number of $g\in G$ such that ${}_{R}({}^{g}X,N)\neq 0$. Indeed, since $G$ acts freely on the objects of $R$, the function $g\mapsto gx$ is injective, for a fixed $x\in\ob(R)$. This implies that $\supp({}^{g}X)\cap\supp(N)\neq\emptyset$ only for finite number of $g\in G$ and the claim follows, because the supports of $X$ and $N$ are finite.

$(3)$ We show that $gT\cong T$ implies $g=1$, for any $g\in G$. Indeed, let $X$ be an $R$-module such that $T(X)\neq 0$. Since $gT\cong T$, we get $g^{n}T\cong T$ and thus $T(X)\cong T({}^{g^{-n}}X)\neq 0$, for any $n\in\NN$. If $g\neq 1$, the elements $g^{-n}$ are pairwise different, because $G$ is torsion-free. Hence by $(2)$ we conclude that $g=1$ and consequently $G$ acts freely on $\CF(R)$.
\end{proof}

The following straightforward fact shows that $\CF(R)$ is not only an abelian $K$-category, but also Krull-Schmidt. In fact, all the assertions are folklore, but we present their short proofs for convenience.

\begin{prop}\label{p3} The following assertions hold for any locally bounded $K$-category $R$.

\begin{enumerate}[\rm(1)]
  \item The category $\CF(R)$ is hom-finite.
  \item A functor $T\in\CF(R)$ has local endomorphism algebra if and only if $T$ is indecomposable.
  \item The category $\CF(R)$ is an abelian Krull-Schmidt $K$-category.
\end{enumerate}
\end{prop}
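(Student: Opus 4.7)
The plan is to prove the three assertions in order, since each builds on the previous.

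For (1), the key input is the Yoneda lemma together with the existence of a projective presentation. Given $T_{1},T_{2}\in\CF(R)$, I would fix a projective presentation ${}_{R}(*,M)\ra{}_{R}(*,N)\ra T_{1}\ra 0$ and apply the left exact functor ${}_{\CF(R)}(-,T_{2})$. This yields an exact sequence
\[ 0\ra{}_{\CF(R)}(T_{1},T_{2})\ra{}_{\CF(R)}({}_{R}(*,N),T_{2})\ra{}_{\CF(R)}({}_{R}(*,M),T_{2}). \]
By the Yoneda lemma, ${}_{\CF(R)}({}_{R}(*,N),T_{2})\cong T_{2}(N)$, which is finite dimensional since $T_{2}$ takes values in $\mod(K)$. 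Hence ${}_{\CF(R)}(T_{1},T_{2})$ is a subspace of a finite dimensional $K$-vector space, which proves hom-finiteness.

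For (2), the forward direction is formal: if $\End_{\CF(R)}(T)$ is local, then it has no non-trivial idempotents, and therefore $T$ admits no non-trivial decomposition, i.e. $T$ is indecomposable. For the converse, I would use (1) to see that $E:=\End_{\CF(R)}(T)$ is a finite dimensional $K$-algebra, hence artinian. Indecomposability of $T$ implies that $E$ has only the trivial idempotents $0$ and $1$. Since idempotents lift modulo $\rad(E)$, the semisimple quotient $E/\rad(E)$ also has only trivial idempotents, and by Wedderburn it must be a division algebra (even a field, as $K$ is algebraically closed). Thus $E$ is local.

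For (3), I would first recall that $\CF(R)$ is abelian: indeed, as noted earlier in the paper, $\mod(R)$ is trivially a contravariantly finite subcategory of itself (each $M\in\mod(R)$ is its own right $\mod(R)$-approximation), so $\CF(R)=\CF(\mod(R))$ is abelian. To obtain the Krull-Schmidt property, I would establish existence and uniqueness of decompositions into indecomposables. Existence follows from hom-finiteness: if $T$ admits a non-trivial decomposition $T=T_{1}\oplus T_{2}$, then $\End(T_{1})\oplus\End(T_{2})$ embeds into $\End(T)$, so by (1) the dimension strictly decreases on each summand, and any chain of non-trivial decompositions must terminate in indecomposable summands. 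Uniqueness then follows from (2) combined with the classical Krull-Schmidt theorem for additive categories: a finite direct sum decomposition into indecomposables with local endomorphism rings is unique up to permutation and isomorphism. The main obstacle, if any, is verifying abelianness of $\CF(R)$ cleanly, but this is already covered by the background in Section 2.4, so the argument reduces to the standard pattern above.
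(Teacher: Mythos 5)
Your proof is correct and follows essentially the same route as the paper: both rest on the Yoneda isomorphism ${}_{\CR}({}_{R}(*,N),T_{2})\cong T_{2}(N)$ together with an epimorphism from a representable functor for (1), the non-existence of non-trivial idempotents in the finite dimensional algebra $\End_{\CR}(T)$ for (2), and an induction on $\dim_{K}\End_{\CR}(T)$ plus the classical Krull--Schmidt (Azumaya) theorem for (3). The only cosmetic differences are that you phrase (1) via left exactness of ${}_{\CF(R)}(-,T_{2})$ applied to the whole presentation rather than just precomposition with the epimorphism, and in (2) you invoke idempotent lifting and Wedderburn where the paper directly cites the existence of complementary idempotents in a non-local finite dimensional algebra.
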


\begin{proof} $(1)$ Assume that $T\in\CF(R)$ and $X\in\mod(R)$. Observe that the Yoneda lemma yields ${}_{\CR}({}_{R}(*,X),T)\cong T(X)$ and hence ${}_{\CR}({}_{R}(*,X),T)$ is of finite dimension. Furthermore, assume that $T_{1},T_{2}\in\CF(R)$. Since $T_{1}$ is finitely presented, there exists an epimorphism $\pi:{}_{R}(*,Y)\ra T_{1}$, for some $Y\in\mod(R)$. Then the map ${}_{\CR}(T_{1},T_{2})\ra{}_{\CR}({}_{R}(*,Y),T_{2})$ defined as $(-)\circ\pi$ is a monomorphism of vector spaces, because $\iota\circ\pi=0$ yields $\iota=0$, for any $\iota\in{}_{\CR}(T_{1},T_{2})$ (since $\pi$ is an epimorphism). This implies that the hom-space ${}_{\CR}(T_{1},T_{2})$ is of finite dimension.

$(2)$ It is enough to show that any indecomposable functor $T\in\CF(R)$ has local endomorphism algebra, because the converse is true in general. Hence assume that $T\in\CF(R)$ is indecomposable. Then $(1)$ yields $\End_{\CR}(T)$ is a finite dimensional $K$-algebra. Therefore if $\End_{\CR}(T)$ is not local, there exist two non-zero idempotents $e_{1},e_{2}=1_{T}-e_{1}\in\End_{\CR}(T)$, see for example \cite[I 4.6]{AsSiSk}. It follows that $T\cong\Im(e_{1})\oplus\Im(e_{2})$ and $\Im(e_{1}),\Im(e_{2})$ are finitely presented, because $\CF(R)$ is abelian. This contradicts the assumption about $T$.

$(3)$ Assume that $T\in\CF(R)$. An easy inductive argument based on $(1)$ yields $T$ decomposes into a finite direct sum of indecomposable finitely presented functors. Since these functors have also local endomorphism algebras by $(2)$, we conclude from Krull-Schmidt theorem (see for example Appendix E of \cite{Pr2}) that this decomposition is unique, up to isomorphism of the direct summands and their order.
\end{proof}

The following theorem shows fundamental properties of the functor $\Phi:\CF(R)\ra\CF(A)$ induced by the given Galois $G$-covering $F:R\ra A$. Namely, it states that $\Phi$ is a Galois $G$-precovering of functor categories provided $G$ is torsion-free. The torsion-freeness of $G$ is necessary to show that $\Phi$ preserves indecomposable functors, but the remaining properties are independent of this assumption. 

The theorem is a generalization of \cite[Theorem 5.5]{P4} where it is shown that the functor $\Phi:\CF(R)\ra\CF(A)$ is a Galois $G$-precovering under the assumption that $R$ is locally support-finite. Recall that in this case the push-down functor $F_{\lambda}:\mod(R)\ra\mod(A)$ is dense, and this fact was heavily used in the proof of \cite[Theorem 5.5]{P4}.

\begin{thm}\label{t4} Assume that $R$ is a locally bounded $K$-category, $G$ an admissible group of $K$-linear automorphisms of $R$ and $F:R\ra A\cong R\slash G$ the Galois $G$-covering. The functor $\Phi:\CF(R)\ra\CF(A)$ has the following properties.

\begin{enumerate}[\rm(1)]
	\item There exists an isomorphism $\Phi(T)\cong\Phi(gT)$, for any $T\in\CF(R)$ and $g\in G$.
	\item There exists an isomorphism $\Psi(\Phi(T))\cong\bigoplus_{g\in G}gT$, for any $T\in\CF(R)$. Moreover, if functors $T_{1},T_{2}\in\CF(R)$ are indecomposable, then $\Phi(T_{1})\cong\Phi(T_{2})$ implies $T_{1}\cong gT_{2}$, for some $g\in G$.
\item If $G$ is torsion-free, then $\Phi$ preserves indecomposability.
	\item Assume $T,T'\in\CF(R)$ and $U\in\CF(A)$. There are natural isomorphisms of vector spaces $${}_{\CA}(\Phi(T),U)\cong {}_{\CR}(T,\Psi(U))\textnormal{ and }{}_{\CR}(\Psi(U),T)\cong {}_{\CA}(U,\Phi(T))$$ which induce natural isomorphisms $$\bigoplus_{g\in G}{}_{\CR}(gT,T')\cong{}_{\CA}(\Phi(T),\Phi(T'))\cong\bigoplus_{g\in G}{}_{\CR}(T,gT').$$ The latter isomorphisms are given by $$(\iota_{g})_{g\in G}\mapsto\sum_{g\in G}\Phi(\iota_{g})\mapsfrom(g^{-1}\iota_{g})_{g\in G}$$ where $\iota_{g}:gT_{1}\ra T_{2}$ is a homomorphism of functors, for any $g\in G$.
\item Consequently, if the group $G$ is torsion-free, then the functor $\Phi:\CF(R)\ra\CF(A)$ is a Galois $G$-precovering of functor categories.
\end{enumerate}
\end{thm}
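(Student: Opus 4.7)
My plan is to prove assertions (1)--(4) separately and then obtain (5) by checking Definition \ref{0d1} clause by clause. For (1), I would write $T = \Coker{}_{R}(*, f)$ for some $R$-homomorphism $f$; Proposition \ref{p2}(1) then yields $gT \cong \Coker{}_{R}(*, {}^{g}f)$, the argument in Theorem \ref{t2}(1) gives $\Phi(gT) \cong \Coker{}_{A}(-, F_{\lambda}({}^{g}f))$, and Theorem \ref{0t1}(1) supplies $F_{\lambda}({}^{g}f) \cong F_{\lambda}(f)$, so $\Phi(gT) \cong \Phi(T)$.

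The first part of (2) is a direct unfolding: for $X \in \mod(R)$,
$$\Psi(\Phi(T))(X) \;=\; T \otimes_{\CR} {}_{A}(F_{\lambda}(X), F_{\lambda}(*)).$$
Remark \ref{0r1} rewrites the $\CR$-bimodule ${}_{A}(F_{\lambda}(X), F_{\lambda}(-))$ as $\bigoplus_{g \in G} {}_{R}({}^{g^{-1}}X, -)$, and then co-Yoneda yields $\Psi(\Phi(T))(X) \cong \bigoplus_{g} T({}^{g^{-1}}X) = \bigoplus_{g}(gT)(X)$; the sum is pointwise finite by Proposition \ref{p2}(2). For the second part of (2), if $T_{1}, T_{2} \in \CF(R)$ are indecomposable and $\Phi(T_{1}) \cong \Phi(T_{2})$, applying $\Psi$ gives $\bigoplus_{g} gT_{1} \cong \bigoplus_{g} gT_{2}$; evaluating at any object of $\mod(R)$ reduces to a finite direct sum of indecomposables with local endomorphism rings (Proposition \ref{p3}), so Krull--Schmidt identifies $T_{1}$ with some $gT_{2}$.

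I would handle (4) before (3), since (3) depends on (4). The first pair of isomorphisms is immediate from Corollary \ref{c1} (adjunctions $(\Phi, \Psi)$ and $(\Psi, \Theta)$) together with Theorem \ref{t2}(1), which gives $\Phi|_{\CF(R)} \cong \Theta|_{\CF(R)}$. Substituting $U = \Phi(T')$ in the first isomorphism and invoking (2) yields
$${}_{\CA}(\Phi(T), \Phi(T')) \cong {}_{\CR}(T, \Psi(\Phi(T'))) \cong {}_{\CR}\bigl(T, \bigoplus_{g} gT'\bigr) \cong \bigoplus_{g} {}_{\CR}(T, gT'),$$
the last step being justified because only finitely many summands are nonzero (Proposition \ref{p2}(2) plus finite generation of $T$). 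The parallel decomposition $\bigoplus_{g} {}_{\CR}(gT, T')$ arises from the right-adjoint identity. The explicit formulas $(\iota_{g}) \mapsto \sum \Phi(\iota_{g})$ and the reindexing $(\iota_{g}) \leftrightarrow (g^{-1}\iota_{g})$ are read off by tracing the adjunction unit and the identifications $\Phi(gT) \cong \Phi(T)$ from (1).

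Item (3) is the step I expect to be the main obstacle. Specializing (4) to $T' = T$ gives an isomorphism of $K$-vector spaces $\End_{\CA}(\Phi(T)) \cong \bigoplus_{g} \Hom_{\CR}(gT, T)$, which is finite-dimensional (Proposition \ref{p3}(1) plus Proposition \ref{p2}(2)); by Proposition \ref{p3}(2), indecomposability of $\Phi(T)$ is equivalent to locality of this algebra. When $G$ is torsion-free, Proposition \ref{p2}(3) gives $gT \not\cong T$ for $g \neq 1$, so each off-diagonal summand $\Hom_{\CR}(gT, T)$ consists of non-isomorphisms, while the diagonal summand $\End_{\CR}(T)$ is local with residue field $K$. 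My plan is to show that $I := \rad\End_{\CR}(T) \oplus \bigoplus_{g \neq 1} \Hom_{\CR}(gT, T)$ is a two-sided ideal of codimension one. The hard point is to transport the composition law from $\End_{\CA}(\Phi(T))$ to the direct-sum decomposition: a product $\Phi(\iota_{g})\circ\Phi(\kappa_{h})$ mixes summands indexed by different $g, h \in G$, and one must exploit the canonical identifications $\Phi(gT) \cong \Phi(T)$ from (1) together with functoriality of $\Phi$ to show that only the $g = h = 1$ pair can produce an invertible contribution modulo $I$. Granted (1)--(4), assertion (5) is bookkeeping: the analogue of (1) for morphisms gives Definition \ref{0d1}(1), (2) gives (2), (3) gives (3), and (4) gives (4), so $\Phi:\CF(R)\ra\CF(A)$ is a Galois $G$-precovering.
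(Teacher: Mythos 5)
Your plan for (1), (2), (4) and (5) is sound and close to the paper's own proof. For (1) you follow the paper exactly. For the first half of (2) you unfold the tensor product and apply co-Yoneda to the bimodule ${}_{A}(F_{\lambda}(X),F_{\lambda}(*))\cong\bigoplus_{g}{}_{R}({}^{g}X,*)$; the paper instead pushes a projective presentation of $T$ through $\Phi$ and uses $F_{\bullet}F_{\lambda}\cong\bigoplus_{g}{}^{g}(-)$, but your computation is exactly the one the paper carries out later in Theorem \ref{t5}, so nothing is lost. The second half of (2) and the adjunction part of (4) match the paper; for the explicit formula $(\iota_{g})_{g}\mapsto\sum_{g}\Phi(\iota_{g})$ the paper does not merely ``trace the adjunction unit'' but verifies surjectivity of this map concretely, by decomposing a morphism $\theta:\Phi(T)\ra\Phi(T')$ given on projective presentations as $\alpha=\sum_{g}F_{\lambda}(s_{g})$, $\beta=\sum_{g}F_{\lambda}(t_{g})$ and checking $\sum_{g}\Phi(\iota_{g})=\theta$ by a dimension count; your sketch is right in spirit but this is the step that actually needs the computation.

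The genuine divergence, and the one place where your argument is not finished, is (3). The paper does not go through the endomorphism algebra at all (and in particular does not need (4) to prove (3)): it assumes $\Phi(T)\cong U\oplus V$ with $U$ indecomposable, applies $\Psi$ to get $\Psi(U)\cong\bigoplus_{h\in H}hT$ for some nonempty $H\subseteq G$ by Krull--Schmidt, observes that $\Psi(U)=U\circ F_{\lambda}$ is $G$-invariant because $F_{\lambda}$ is, and concludes $gH\subseteq H$ for all $g$ from the free action of $G$ on $\CF(R)$ (Proposition \ref{p2}(3)), whence $H=G$ and $V=0$. Your alternative --- showing $\End_{\CA}(\Phi(T))$ is local via the decomposition $\bigoplus_{g}{}_{\CR}(gT,T)$ --- is a standard and workable strategy, but you stop precisely at the point you yourself flag as the main obstacle: you never establish that the composition law respects the $G$-grading, i.e.\ that the $(g,h)$-component of $\Phi(\iota_{g})\circ\Phi(\kappa_{h})$ is $\Phi(\iota_{g}\circ g\kappa_{h})$ landing in the $gh$-summand, which is what makes $I$ a two-sided ideal. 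That verification is closable (for $gh=1$ with $g\neq 1$ the composite $T=(gh)T\ra gT\ra T$ would, if invertible, split $T$ off the indecomposable $gT\not\cong T$, so it lies in $\rad\End_{\CR}(T)$), but as written it is a declared plan rather than a proof, and it requires essentially the same explicit-formula bookkeeping from (4) that you also left implicit. If you want the shorter route, adopt the paper's argument via $G$-invariance of $\Psi(U)$.
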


\begin{proof}
(1) Assume $g\in G$ and $T=\Coker{}_{R}(*,f)$, for some $R$-homomorphism $f:M\ra N$. Then $gT\cong\Coker{}_{R}(*,{}^{g}f)$, by Proposition \ref{p2} (1) and since $F_{\lambda}(f)\cong F_{\lambda}({}^{g}f)$, we obtain $$\Phi(gT)\cong\Coker{}_{A}(-,F_{\lambda}({}^{g}f))\cong\Coker{}_{A}(-,F_{\lambda}(f))\cong\Phi(T).$$

(2) Assume that $T=\Coker{}_{R}(*,f)$, for some $R$-homomorphism $f:M\ra N$. It is easy to see that ${}_{R}(*,\bigoplus_{g\in G}{}^{g}L)\cong\bigoplus_{g\in G}{}_{R}(*,L)$, for any $L\in\mod(R)$. Moreover, recall that $F_{\bullet}(F_{\lambda}(f))\cong\bigoplus_{g\in G}{}^{g}f$. Therefore we obtain the following natural isomorphisms: $$\Psi(\Phi(T))=\Phi(T)\circ F_{\lambda}\cong\Coker_{R}(F_{\lambda}(*),F_{\lambda}(f))\cong\Coker_{R}(*,F_{\bullet}(F_{\lambda}(f)))\cong$$$$\cong
\Coker_{R}(*,\bigoplus_{g\in G}{}^{g}f)\cong\Coker_{R}(\bigoplus_{g\in G}(*,{}^{g}f))\cong\bigoplus_{g\in G}\Coker_{R}(*,{}^{g}f)\cong\bigoplus_{g\in G}gT.$$ This shows the first part of the assertion. Now assume that $T_{1},T_{2}\in\CF(R)$ are indecomposable and $\Phi(T_{1})\cong\Phi(T_{2})$. Then $\Psi(\Phi(T_{1}))\cong\Psi(\Phi(T_{2}))$ and hence there is an isomorphism $\bigoplus_{g\in G}gT_{1}\cong\bigoplus_{g\in G}gT_{2}$. Proposition \ref{p3} (2) yields $T_{1}$ and $T_{2}$ have local endomorphism algebras. Note that, for any $g\in G$, $gT_{1}$ and $gT_{2}$ have local endomorphism algebras as well, because $\End_{\CR}(T)\cong\End_{\CR}(gT)$, for any $T\in\CF(R)$. Hence the Krull-Schmidt theorem yields $T_{1}\cong gT_{2}$, for some $g\in G$.

(3) Assume that $G$ is torsion-free and $T\in\CF(R)$ is indecomposable. We show that $\Phi(T)$ is indecomposable. Indeed, assume to the contrary that $\Phi(T)\cong U\oplus V$, for some functors $U,V\in\CF(A)$ such that $U$ is indecomposable. Then it follows by $(2)$ that $\bigoplus_{g\in G}gT\cong\Psi(U)\oplus\Psi(V)$ and so the Krull-Schmidt theorem yields $\Psi(U)\cong\bigoplus_{h\in H}hT$, for some nonempty set $H\subseteq G$. Observe that $g\Psi(U)=g(U\circ F_{\lambda})\cong U\circ F_{\lambda}$, because $F_{\lambda}$ is $G$-invariant, and thus $$g\Psi(U)\cong\bigoplus_{h\in H}(gh)T\cong\bigoplus_{h\in H}hT,$$ for any $g\in G$. Since $G$ acts freely on $\CF(R)$ by Proposition \ref{p2} (3), we conclude that $gH\subseteq H$, for any $g\in G$. This easily implies that $H=G$, thus $V=0$ and so $\Phi(T)\cong U$ is indecomposable.

(4) The first two natural isomorphisms follow from the fact that $(\Phi,\Psi)$ and $(\Psi,\Theta)$ are adjoint pairs and $\Phi|_{\CF(R)}\cong\Theta|_{\CF(R)}$, see Corollary \ref{c1} and Theorem \ref{t2} (1), respectively. Now we show the latter isomorphisms. Set $U=\Phi(T')$. Then by $(2)$ we obtain $${}_{\CA}(\Phi(T),\Phi(T'))\cong {}_{\CR}(T,\Psi(\Phi(T')))\cong{}_{\CR}(T,\bigoplus_{g\in G}{}gT').$$ Since the vector space ${}_{\CA}(\Phi(T),\Phi(T'))$ is of finite dimension by Proposition \ref{p3} (1), we conclude that there are only finitely many $g\in G$ such that ${}_{\CR}(T,gT')\neq 0.$\footnote{A direct argument for this to hold is the following. Since $T,T'\in\CF(R)$, there are $N,N'\in\mod(R)$ and epimorphisms ${}_{R}(*,N)\ra T$ and ${}_{R}(*,N)\ra T'$. Hence if ${}_{\CR}(T,gT')\neq 0$, then ${}_{\CR}({}_{R}(*,N),{}_{R}(*,{}^{g}N'))\cong{}_{R}(N,{}^{g}N')\neq 0$ and ${}_{R}(N,{}^{g}N')\neq 0$ holds only for finite number of $g\in G$.} Moreover, it is easy to see that ${}_{\CR}(T,gT')\cong{}_{\CR}(g^{-1}T,T')$, for any $g\in G$ and thus we obtain $${}_{\CR}(T,\bigoplus_{g\in G}{}gT')\cong\bigoplus_{g\in G}{}_{\CR}(T,gT')\cong\bigoplus_{g\in G}{}_{\CR}(gT,T').$$ This shows the first part of the assertion. In the sequel we prove that the isomorphism $\bigoplus_{g\in G}{}_{\CR}(gT,T')\cong{}_{\CA}(\Phi(T),\Phi(T'))$ is given by $$(\iota_{g})_{g\in G}\stackrel{\mu}{\mapsto}\sum_{g\in G}\Phi(\iota_{g})$$ where $\iota_{g}:gT_{1}\ra T_{2}$ are homomorphisms of functors. Indeed, it is easy to see that $\mu$ is a $K$-linear map. Since we know that the appropriate vector spaces have the same finite dimension, it suffices to prove that the map is an epimorphism. Hence assume that $T=\Coker_{R}(*,f)$, $T'=\Coker_{R}(*,f')$ where $f:M\ra N$ and $f':M'\ra N'$ and let $\theta:\Phi(T)\ra\Phi(T')$ be a morphism of functors defined by $\alpha:F_{\lambda}(M)\ra F_{\lambda}(M')$ and $\beta:F_{\lambda}(N)\ra F_{\lambda}(N')$. Recall that there are $s_{g}:{}^{g}M\ra M'$ and $t_{g}:{}^{g}N\ra N'$, for $g\in G$, such that $\alpha=\sum_{g\in G}F_{\lambda}(s_{g})$ and $\beta=\sum_{g\in G}F_{\lambda}(t_{g})$. Let $\iota_{g}:gT\ra T'$ be defined by $(s_{g},t_{g})$, for $g\in G$, which means there is the following commutative diagram with exact rows: $$\xymatrix{{}_{R}(*,{}^{g}M)\ar[rr]^{{}_{R}(*,{}^{g}f)}\ar[dd]^{{}_{R}(*,s_g)}&&{}_{R}(*,{}^{g}N)\ar[dd]^{{}_{R}(*,t_g)}\ar[rr]&&gT\ar[dd]^{\iota_{g}}\ar[rr]&&0\\\\
{}_{R}(*,M')\ar[rr]^{{}_{R}(*,f')}&&{}_{R}(*,N')\ar[rr]&&T'\ar[rr]&&0.}$$ Then $\Phi(\iota_{g})$ is defined by $(F_{\lambda}(s_{g}),F_{\lambda}(t_{g}))$ and since $F_{\lambda}$ is $G$-invariant, we obtain the following commutative diagram with exact rows: $$\xymatrix{{}_{A}(-,F_{\lambda}(M))\ar[rr]^{{}_{A}(-,F_{\lambda}(f))}\ar[dd]^{{}_{A}(-,F_{\lambda}(s_g))}&&{}_{A}(-,F_{\lambda}(N))\ar[dd]^{{}_{A}(-,F_{\lambda}(t_g))}\ar[rr]&&\Phi(gT)\cong\Phi(T)\ar[dd]^{\Phi(\iota_{g})}\ar[rr]&&0\\\\
{}_{A}(-,F_{\lambda}(M'))\ar[rr]^{{}_{A}(-,F_{\lambda}(f'))}&&{}_{A}(-,F_{\lambda}(N'))\ar[rr]&&\Phi(T')\ar[rr]&&0.}$$ Observe that for any $A$-modules $X,Y$ and $A$-homomorphisms $h_{1},h_{2}:X\ra Y$ we have $${}_{A}(-,h_{1}+h_{2})={}_{A}(-,h_{1})+{}_{A}(-,h_{2}).$$ This fact easily implies that the morphism of functors $\sum_{g\in G}\Phi(\iota_{g}):\Phi(T)\ra\Phi(T')$ is defined by $$(\sum_{g\in G}F_{\lambda}(s_{g}),\sum_{g\in G}F_{\lambda}(t_{g}))=(\alpha,\beta)$$ and hence it equals the initial morphism $\theta:\Phi(T)\ra\Phi(T')$. This yields $$\mu((\iota_{g})_{g\in G})=\sum_{g\in G}\Phi(\iota_{g})=\theta$$ and the claim follows. Finally, observe that the function $(\iota_{g})_{g\in G}\mapsto(g^{-1}\iota_{g})_{g\in G}$ induces an isomorphism $\bigoplus_{g\in G}{}_{\CR}(gT,T')\cong\bigoplus_{g\in G}{}_{\CR}(T,gT')$. This finishes the proof of $(4)$.

(5) This follows directly from all the previous facts. \end{proof}

\begin{rem}\label{r2} We show in Theorem \ref{t2} (2) that the functor $\Phi:\CF(R)\ra\CF(A)$ is faithful. Recall that this property, together with the exactness of $\Phi$, is crucial for the inequality $\KG(R)\leq\KG(A)$ to hold (see Theorem \ref{t3}). It it worth to notice that the faithfulness of $\Phi:\CF(R)\ra\CF(A)$ follows also from the fact that the isomorphism $$\bigoplus_{g\in G}{}_{\CR}(gT,T')\cong{}_{\CA}(\Phi(T),\Phi(T')),$$ proved above, is given by $(\iota_{g})_{g\in G}\mapsto\sum_{g\in G}\Phi(\iota_{g})$. Indeed, by composing this isomorphism with the inclusion monomorphism ${}_{\CR}(T,T')\hookrightarrow\bigoplus_{g\in G}{}_{\CR}(gT,T')$, we obtain a monomorphism which gets the form $\iota\mapsto\Phi(\iota)$. This shows the faithfulness of the functor $\Phi$.
\end{rem}

\subsection{The case of dense push-down functor}

Throughout the section we assume that $R$ is a locally bounded $K$-category, $G$ an admissible group of $K$-linear automorphisms of $R$ and $F:R\ra A\cong R\slash G$ the Galois covering. 

The following Theorem \ref{t5} widely generalizes a part of the assertion $(2)$ of Theorem \ref{t4}, stating only the existence of an isomorphism $\Psi(\Phi(T))\cong\bigoplus_{g\in G}gT$, for any finitely presented $T\in\CF(R)$. This result is further applied to show that if the push-down functor $F_{\lambda}:\mod(R)\ra\mod(A)$ is dense, then the functors $\Phi,\Theta:\MOD(\CR)\ra\MOD(\CA)$ have descriptions analogous to the classical covering functors, see Corollary \ref{c2}. In the dense case, this also yields a simpler proof of the fact that $\Phi$ and $\Theta$ are equivalent on the category $\CF(R)$ of finitely presented functors, proved in general in Theorem \ref{t2} (1), and hence that they are exact functors, see Theorem \ref{t6}.

\begin{thm}\label{t5} For any functor $T\in\MOD(\CR)$ we have natural isomorphisms of functors $$\Psi(\Phi(T))=\Phi(T)\circ F_{\lambda}\cong\bigoplus_{g\in G}gT\textnormal{ and }\Psi(\Theta(T))=\Theta(T)\circ F_{\lambda}\cong\prod_{g\in G}gT$$ which extend to natural equivalences $$\Psi(\Phi(\cdot))\cong\bigoplus_{g\in G}g(\cdot)\textnormal{ and }\Psi(\Theta(\cdot))\cong\prod_{g\in G}g(\cdot).$$

\end{thm}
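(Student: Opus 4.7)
The plan is to evaluate both functors pointwise on $X\in\mod(R)$, transfer the hom-space ${}_{A}(F_{\lambda}(X),F_{\lambda}(*))$ to something expressed purely in terms of $\CR$ via the adjunction $(F_{\lambda},F_{\bullet})$, and then apply the co-Yoneda (respectively Yoneda) isomorphism. Naturality of every building block will yield the required natural equivalences.

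First I would treat $\Psi(\Phi(T))$. By definition, for $X\in\mod(R)$,
$$\Psi(\Phi(T))(X)=\Phi(T)(F_{\lambda}(X))=T\otimes_{\CR}[{}_{A}(F_{\lambda}(X),F_{\lambda}(*))].$$
The key ingredient is the natural isomorphism of left $\CR$-modules (in the variable $*$)
$${}_{A}(F_{\lambda}(X),F_{\lambda}(*))\cong{}_{R}(X,F_{\bullet}F_{\lambda}(*))\cong{}_{R}\Bigl(X,\bigoplus_{g\in G}{}^{g}(*)\Bigr)\cong\bigoplus_{g\in G}{}_{R}(X,{}^{g}(*))\cong\bigoplus_{g\in G}{}_{R}({}^{g^{-1}}X,*),$$
which comes from Theorem \ref{0t1} (and Remark \ref{0r1}); the finiteness of the supports of $X$ and $*$ ensures that ${}_{R}(X,{}^{g}(*))$ is nonzero for only finitely many $g$, so the direct sum is effectively finite and the displayed identifications are valid. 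Since $-\otimes_{\CR}-$ preserves direct sums in the second variable, we obtain
$$T\otimes_{\CR}[{}_{A}(F_{\lambda}(X),F_{\lambda}(*))]\cong\bigoplus_{g\in G}T\otimes_{\CR}{}_{R}({}^{g^{-1}}X,*)\cong\bigoplus_{g\in G}T({}^{g^{-1}}X)=\bigoplus_{g\in G}(gT)(X),$$
where the middle isomorphism is the co-Yoneda formula $T\otimes_{\CR}\CR(M,-)\cong T(M)$ recalled at the start of Section 3.1, and the final equality is the definition of the $G$-action on $\MOD(\CR)$ given at the beginning of Section 4.

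Next I would handle $\Psi(\Theta(T))$ in the dual way. By definition,
$$\Psi(\Theta(T))(X)=\Theta(T)(F_{\lambda}(X))={}_{\CR}\bigl({}_{A}(F_{\lambda}(*),F_{\lambda}(X)),T\bigr).$$
Arguing symmetrically with the other half of the adjunction-induced isomorphism (again using that the relevant hom-spaces vanish for all but finitely many $g$), one obtains as a right $\CR$-module in $*$
$${}_{A}(F_{\lambda}(*),F_{\lambda}(X))\cong\bigoplus_{g\in G}{}_{R}(*,{}^{g}X).$$
Since ${}_{\CR}(-,T)$ sends direct sums to products, and then by the Yoneda lemma ${}_{\CR}({}_{R}(-,{}^{g}X),T)\cong T({}^{g}X)=(g^{-1}T)(X)$, we get
$$\Psi(\Theta(T))(X)\cong\prod_{g\in G}{}_{\CR}({}_{R}(-,{}^{g}X),T)\cong\prod_{g\in G}(g^{-1}T)(X)\cong\prod_{g\in G}(gT)(X),$$
after reindexing $g\mapsto g^{-1}$.

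Finally I would verify that the resulting pointwise isomorphisms are natural in $X$ (so that they are isomorphisms of functors $\Psi(\Phi(T)),\Psi(\Theta(T))\in\MOD(\CR)$) and natural in $T$ (so that they extend to natural equivalences of functors $\MOD(\CR)\to\MOD(\CR)$). Both naturalities are automatic: every step — the hom–tensor adjunction $(F_{\lambda},F_{\bullet})$, the isomorphism $F_{\bullet}F_{\lambda}(-)\cong\bigoplus_{g}{}^{g}(-)$, the commutation of $\otimes$ and $\Hom$ with (co)products, and the (co)Yoneda isomorphism — is already natural in all variables in play. The only step that requires some care is the rearrangement of direct sums and products against $\otimes$ and $\Hom$, which is the main technical point; however, because the indexing is effectively over a finite subset of $G$ once one evaluates on a finite-dimensional module $X$ and a finitely supported $*$, no subtle commutation issue actually arises. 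This will complete the proof.
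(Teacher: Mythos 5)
Your proposal is correct and follows essentially the same route as the paper's proof: convert ${}_{A}(F_{\lambda}(-),F_{\lambda}(-))$ into $\bigoplus_{g\in G}{}_{R}(-,{}^{g}(-))$ via the $(F_{\lambda},F_{\bullet})$ adjunction and the finiteness observation of Proposition \ref{p2} (2), then commute the tensor product (resp. $\Hom$) with the resulting sum and apply the co-Yoneda (resp. Yoneda) isomorphism. The only cosmetic differences are that you evaluate pointwise at $X$ and move $F_{\bullet}$ onto the covariant slot where the paper keeps the variable $?$ and moves $F_{\bullet}$ onto the contravariant slot.
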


\begin{proof} Let $T\in\MOD(\CR)$ and $?\in\mod(R)$. We show that $\Psi(\Phi(T))\cong\bigoplus_{g\in G}gT$. Indeed, first observe we have the following natural isomorphisms: $$\Psi(\Phi(T))=\Phi(T)\circ F_{\lambda}=T\otimes_{\CR}[{}_{A}(F_{\lambda}(?),F_{\lambda}(*))]\cong T\otimes_{\CR}[{}_{R}(F_{\bullet}(F_{\lambda}(?)),*)]\cong$$ $$\cong T\otimes_{\CR}[{}_{R}(\bigoplus_{g\in G}{}^{g}?,*)]\cong T\otimes_{\CR}[\prod_{g\in G}{}_{R}({}^{g}?,*)].$$ Proposition \ref{p2} (2) yields $\prod_{g\in G}{}_{R}({}^{g}?,*)\cong\bigoplus_{g\in G}{}_{R}({}^{g}?,*)$, because for any $M,X\in\mod(R)$ there is only a finite number of $g\in G$ such that ${}_{R}({}^{g}X,M)\neq 0$. Therefore we obtain $$T\otimes_{\CR}[\prod_{g\in G}{}_{R}({}^{g}?,*)]\cong T\otimes_{\CR}[\bigoplus_{g\in G}{}_{R}({}^{g}?,*)]\cong\bigoplus_{g\in G}T\otimes_{\CR}[{}_{R}({}^{g}?,*)]\cong\bigoplus_{g\in G}T(^{g}?)=\bigoplus_{g\in G}gT,$$ which follows from the fact that the tensor product commutes with colimits and also from the co-Yoneda lemma, respectively. 

Now we show that $\Psi(\Theta(T))\cong\prod_{g\in G}gT$. Indeed, some standard arguments and the Yoneda lemma yield the natural isomorphisms: $$\Psi(\Theta(T))=\Theta(T)\circ F_{\lambda}={}_{\CR}({}_{A}(F_{\lambda}(*),F_{\lambda}(?)),T)\cong{}_{\CR}({}_{R}(*,F_{\bullet}(F_{\lambda}(?))),T)\cong$$ $$
\cong{}_{\CR}({}_{R}(*,\bigoplus_{g\in G}{}^{g}{?}),T)\cong{}_{\CR}(\bigoplus_{g\in G}{}_{R}(*,{}^{g}{?}),T)\cong\prod_{g\in G}{}_{\CR}({}_{R}(*,{}^{g}{?}),T)\cong\prod_{g\in G}T(^{g}?)=\prod_{g\in G}gT.$$ The second part of the thesis follows from similar arguments, because if $T_{1},T_{2}\in\MOD(R)$ and $\iota:T_{1}\ra T_{2}$ is a morphism of functors, then for any $M\in\mod(R)$ we have $$\iota\otimes_{\CR}[{}_{R}(M,*)]\cong\iota_{M}\textnormal{ and }{}_{\CR}({}_{R}(*,M),\iota)\cong\iota_{M},$$ by the co-Yoneda and Yoneda lemmas, respectively. In consequence, we obtain the isomorphisms $\Psi(\Phi(\iota))\cong\bigoplus_{g\in G}g\iota$ and $\Psi(\Theta(\iota))\cong\prod_{g\in G}g\iota$ which shows the claim.
\end{proof}


It is convenient to formulate the following straightforward corollary of Theorem \ref{t5}, giving an exact description of the functors $\Phi,\Theta:\MOD(\CR)\ra\MOD(\CA)$ on the category of $A$-modules of the first kind. This description is purely analogous to the case of classical covering functors. In some sense, it also generalizes and retrieves the construction of the functor $\Phi:\CF(R)\ra\CF(A)$, as given in Section 5 of \cite{P4}. 

\begin{cor}\label{c2} The functors $\Phi,\Theta:\MOD(\CR)\ra\MOD(\CA)$ have the following properties. 
\begin{enumerate}[\rm(1)]
  \item Assume that $T\in\MOD(\CR)$ and $M\in\mod(R)$. Then we have: $$\Phi(T)(F_{\lambda}(M))\cong\bigoplus_{g\in G}T({}^{g}M)\textnormal{ and }\Theta(T)(F_{\lambda}(M))\cong\prod_{g\in G}T({}^{g}M).$$
  \item Assume that $\alpha:F_{\lambda}(M)\ra F_{\lambda}(N)$ is an $A$-homomorphism such that $\alpha=\sum_{g\in G}F_{\lambda}(f_{g})$ where $f_{g}:{}^{g}M\ra N$, for any $g\in G$. Then we have: $$\Phi(T)(\alpha)\cong\sum_{g\in G}\bigoplus_{h\in G}T({}^{h}f_{g})\textnormal{ and }\Theta(T)(\alpha)\cong\sum_{g\in G}\prod_{h\in G}T({}^{h}f_{g}).$$
  \item Assume that $T_{1},T_{2}\in\MOD(\CR)$, $\iota:T_{1}\ra T_{2}$ is a morphism of functors and let $M\in\mod(R)$. Then we have: $$\Phi(\iota)_{F_{\lambda}(M)}\cong\bigoplus_{g\in G}\iota_{{}^{g}M}\textnormal{ and }\Theta(\iota)_{F_{\lambda}(M)}\cong\prod_{g\in G}\iota_{{}^{g}M}.$$
\end{enumerate}
\end{cor}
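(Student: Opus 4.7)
The plan is to derive all three assertions as direct evaluations of the natural equivalences established in Theorem \ref{t5}, namely $\Psi\Phi(\cdot)\cong\bigoplus_{g\in G}g(\cdot)$ and $\Psi\Theta(\cdot)\cong\prod_{g\in G}g(\cdot)$. Recall that $\Psi=(F_{\lambda})_{\bullet}$ is precomposition with $F_{\lambda}$, so on objects we simply have $\Psi(\Phi(T))(M)=\Phi(T)(F_{\lambda}(M))$, and the action of $G$ on $\MOD(\CR)$ is given by $(gT)(M)=T({}^{g^{-1}}M)$.

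For assertion (1), I would evaluate both sides of Theorem \ref{t5} at an $R$-module $M\in\mod(R)$. This gives
$$\Phi(T)(F_{\lambda}(M))=(\Psi\Phi(T))(M)\cong\bigl(\bigoplus_{g\in G}gT\bigr)(M)=\bigoplus_{g\in G}T({}^{g^{-1}}M),$$
and the substitution $g\mapsto g^{-1}$ reindexes the direct sum to the stated form $\bigoplus_{g\in G}T({}^{g}M)$. The same manipulation with the product (instead of the direct sum) works for $\Theta$.

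For assertion (2), since $\Phi(T)$ is a functor and $\alpha=\sum_{g\in G}F_{\lambda}(f_{g})$ (a finite sum by freeness of the $G$-action on $\ob(R)$), additivity gives $\Phi(T)(\alpha)=\sum_{g\in G}\Phi(T)(F_{\lambda}(f_{g}))$. Each summand is the value of $\Psi\Phi(T)$ on $f_{g}:{}^{g}M\to N$, and Theorem \ref{t5} identifies it with $\bigl(\bigoplus_{h\in G}hT\bigr)(f_{g})=\bigoplus_{h\in G}T({}^{h^{-1}}f_{g})$. After the reindexing $h\mapsto h^{-1}$ as in (1), this becomes $\bigoplus_{h\in G}T({}^{h}f_{g})$, and summing over $g$ yields the claim. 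The case of $\Theta$ is identical with products replacing direct sums.

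For assertion (3), I would invoke the second part of Theorem \ref{t5}, which states the isomorphisms are natural in $T$. Applying naturality to the morphism $\iota:T_{1}\to T_{2}$ and evaluating the resulting commutative square at $M\in\mod(R)$ gives $\Phi(\iota)_{F_{\lambda}(M)}\cong(\bigoplus_{g\in G}g\iota)_{M}=\bigoplus_{g\in G}\iota_{{}^{g^{-1}}M}$, and the same reindexing produces $\bigoplus_{g\in G}\iota_{{}^{g}M}$. The argument for $\Theta$ is verbatim with products. There is no substantive obstacle here, as the content is concentrated in Theorem \ref{t5}; the only care required is bookkeeping of the $G$-action convention $(gT)(M)=T({}^{g^{-1}}M)$ and the harmless reindexing $g\mapsto g^{-1}$ that converts ${}^{g^{-1}}M$ into ${}^{g}M$.
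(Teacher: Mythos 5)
Your proposal is correct and follows essentially the same route as the paper: all three assertions are obtained by evaluating the natural equivalences $\Psi\Phi(\cdot)\cong\bigoplus_{g\in G}g(\cdot)$ and $\Psi\Theta(\cdot)\cong\prod_{g\in G}g(\cdot)$ of Theorem \ref{t5} at objects and morphisms of $\mod(R)$, using additivity for the finite sum $\alpha=\sum_{g}F_{\lambda}(f_{g})$ in (2) and naturality in $T$ for (3). Your explicit bookkeeping of the reindexing $g\mapsto g^{-1}$ coming from the convention $(gT)(M)=T({}^{g^{-1}}M)$ is a harmless refinement of what the paper leaves implicit.
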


\begin{proof} We show that the assertions $(1)$, $(2)$ and $(3)$ follow directly from Theorem \ref{t5}. Indeed, observe that $$\Phi(T)(F_{\lambda}(M))=\Psi(\Phi(T))(M)\cong\bigoplus_{g\in G}gT(M)=\bigoplus_{g\in G}T({}^{g}M)\textnormal{, and moreover}$$ $$\Phi(T)(\alpha)=\Phi(T)(\sum_{g\in G}F_{\lambda}(f_{g}))=\sum_{g\in G}\Phi(T)(F_{\lambda}(f_{g}))\cong$$$$\cong\sum_{g\in G}\Psi(\Phi(T))(f_{g})\cong\sum_{g\in G}\bigoplus_{h\in G}hT(f_{g})=\sum_{g\in G}\bigoplus_{h\in G}T({}^{h}f_{g}).$$ This shows that the homomorphism $$\Phi(T)(\alpha):\bigoplus_{g\in G}T({}^{g}Y)\ra\bigoplus_{g\in G}T({}^{g}X)$$ is defined by $T({}^{h}f_{h^{-1}g}):T({}^{h}Y)\ra T({}^{g}X)$, for any $g,h\in G$. Similarly, we have $$\Phi(\iota)_{F_{\lambda}(M)}\cong\Psi(\Phi(\iota))_{M}\cong\bigoplus_{g\in G}(g\iota)_{M}=\bigoplus_{g\in G}\iota_{{}^{g}M}$$ which implies that the homomorphism $$\Phi(\iota)_{F_{\lambda}(M)}:\bigoplus_{g\in G}T_{1}({}^{g}M)\ra\bigoplus_{g\in G}T_{2}({}^{g}M)$$ is defined by $\iota_{{}^{g}M}:T_{1}({}^{g}M)\ra T_{2}({}^{g}M)$, for any $g\in G$. It is easy to see that all the isomorphisms for the functor $\Theta:\MOD(\CR)\ra\MOD(\CA)$ have analogous proofs. 
\end{proof}

The above facts show in particular that for any functor $T\in\MOD(\CR)$ the functor $\Phi(T)\in\MOD(\CA)$ is a subfunctor of the functor $\Theta(T)\in\MOD(\CA)$, if these functors are restricted to the category of $A$-modules of the first kind. We thus conclude the following result which also gives another proof of Theorem \ref{t2} (1) in the case the push-down $F_{\lambda}:\mod(R)\ra\mod(A)$ is dense.

\begin{thm}\label{t6} Assume that the push-down functor $F_{\lambda}:\mod(R)\ra\mod(A)$ is dense. Then $\Phi:\MOD(\CR)\ra\MOD(\CA)$ is a subfunctor of $\Theta:\MOD(\CR)\ra\MOD(\CA)$. Moreover, these functors restrict to categories of finitely presented functors and they coincide on $\CF(R)$. Consequently, both functors $\Phi,\Theta:\CF(R)\ra\CF(A)$ are exact.
\end{thm}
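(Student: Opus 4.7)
The plan is to exploit Corollary \ref{c2} together with the density of $F_\lambda$, which guarantees that every object of $\CA=\mod(A)$ is (up to isomorphism) of the form $F_\lambda(M)$ for some $M\in\mod(R)$. Thus a natural transformation between right $\CA$-modules is completely determined by its components on objects of the first kind. First I will fix the canonical split monomorphism of vector spaces $\sigma_{T,M}:\bigoplus_{g\in G}T({}^gM)\hookrightarrow\prod_{g\in G}T({}^gM)$ and, via the identifications from Corollary \ref{c2}(1), read it as a map $\Phi(T)(F_\lambda(M))\to\Theta(T)(F_\lambda(M))$. I would then verify that these components assemble into a morphism of $\CA$-modules $\sigma_T:\Phi(T)\to\Theta(T)$: commutativity with $\Phi(T)(\alpha)$ versus $\Theta(T)(\alpha)$ on a morphism $\alpha=\sum_{g}F_\lambda(f_g)$ reduces, by the matrix formulas in Corollary \ref{c2}(2), to the fact that the inclusion $\bigoplus\hookrightarrow\prod$ of a family indexed by $G$ intertwines the two versions of the ``matrix'' $(T({}^hf_{h^{-1}g}))_{g,h}$. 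Naturality of $\sigma_T$ in $T$ follows the same way from Corollary \ref{c2}(3), since $\Phi(\iota)$ and $\Theta(\iota)$ are both given componentwise by $(\iota_{{}^gM})_{g\in G}$. This realizes $\Phi$ as a subfunctor of $\Theta$.

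Next I would deduce that both functors restrict to $\CF(R)$ and agree there. For $T\in\CF(R)$, Proposition \ref{p2}(2) provides, for every fixed $M\in\mod(R)$, only finitely many $g\in G$ with $T({}^gM)\neq 0$, so $\sigma_{T,M}$ is a bijection. Hence $\sigma_T$ is an isomorphism on every object of the first kind, and therefore an isomorphism of $\CA$-modules. To see that $\Phi(T)\in\CF(A)$, I would pick a presentation ${}_R(*,M)\xrightarrow{{}_R(*,f)}{}_R(*,N)\to T\to 0$, apply the right exact functor $\Phi$ (left adjoint to $\Psi$ by Corollary \ref{c1}), and use the co-Yoneda identity $\Phi({}_R(*,M))\cong{}_A(-,F_\lambda(M))$ derived from the definition of $\Phi$ as a tensor product. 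This gives $\Phi(T)\cong\Coker{}_A(-,F_\lambda(f))\in\CF(A)$, and the coincidence $\Phi\cong\Theta$ on $\CF(R)$ then forces $\Theta(T)\in\CF(A)$ as well.

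The exactness statement is then immediate: $\Phi|_{\CF(R)}$ is right exact because $\Phi$ is a left adjoint, $\Theta|_{\CF(R)}$ is left exact because $\Theta$ is a right adjoint, and these two restrictions coincide, so each must be both left and right exact.

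The only real obstacle is the naturality bookkeeping of $\sigma_T$ with respect to morphisms $\alpha:F_\lambda(M)\to F_\lambda(N)$ in $\CA$; the verification is entirely mechanical once the formulas of Corollary \ref{c2}(2) are in hand, but it is the step where density of $F_\lambda$ is genuinely used, since without density one cannot reduce the naturality square for $\sigma_T$ to computations on objects of the first kind. Everything else is a direct consequence of the results already established in Section~3 and the earlier parts of Section~4.
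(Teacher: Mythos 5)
Your proposal is correct and follows essentially the same route as the paper: the subfunctor claim comes from the canonical inclusion $\bigoplus_{g\in G}g(\cdot)\hookrightarrow\prod_{g\in G}g(\cdot)$ transported through the identifications of Theorem \ref{t5}/Corollary \ref{c2} (using density to reduce everything to objects and morphisms of the first kind), the coincidence on $\CF(R)$ comes from Proposition \ref{p2}(2) making the sum and product agree, and exactness follows from right exactness of the left adjoint plus left exactness of the right adjoint. Your extra step verifying $\Phi(T)\in\CF(A)$ via the co-Yoneda presentation argument is the same computation the paper performs at the start of the proof of Theorem \ref{t2}(1), so nothing is missing.
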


\begin{proof} It follows from Theorem \ref{t5} or Corollary \ref{c2} that $\Phi:\MOD(\CR)\ra\MOD(\CA)$ is a subfunctor of $\Theta:\MOD(\CR)\ra\MOD(\CA)$, because the functor $\bigoplus_{g\in G}g(\cdot)$ is a subfunctor of $\prod_{g\in G}g(\cdot)$. Moreover, if $T\in\CF(R)$ is finitely presented, then for any $M\in\mod(R)$ we have $T({}^{g}M)\neq 0$ only for finite number of $g\in G$, see Proposition \ref{p2} $(2)$. This yields $$\Phi(T)(F_{\lambda}(M))\cong\bigoplus_{g\in G}T({}^{g}M)\cong\prod_{g\in G}T({}^{g}M)\cong\Theta(T)(F_{\lambda}(M)),$$ so $\Phi(T)\cong\Theta(T)$ and thus $\Phi|_{\CF(R)}\cong\Theta|_{\CF(R)}$. Hence $\Phi$ and $\Theta$ are exact on $\CF(R)$.
\end{proof}

\begin{rem}\label{r3} For the sake of completeness of the exposition we recall the construction of $\Phi:\CF(R)\ra\CF(A)$ as given in Section 5 of \cite{P4}. 

Assume that $F_{\lambda}:\mod(R)\ra\mod(A)$ is dense. Denote by $\Add(\mod(R))$ the full subcategory of $\MOD(R)$ whose objects are arbitrary direct sums of finite dimensional $R$-modules. Assume that $T\in\CF(R)$ and let $\wh{T}:\Add(\mod(R))\ra\Mod(K)$ be the additive closure of $T$. This means that if $\bigoplus_{j\in J}M_{j}\in\Add(\mod(R))$ and $f:\bigoplus_{j\in J}M_{j}\ra\bigoplus_{i\in I}N_{i}$ is an $R$-homomorphism in $\Add(\mod(R))$ defined by $f_{ij}:M_{j}\ra N_{i}$, for $i\in I$, $j\in J$, then $\wh{T}(\bigoplus_{j\in J}M_{j})=\bigoplus_{j\in J}T(M_{j})$ and $$\wh{T}(f):\bigoplus_{i\in I}T(N_{i})\ra\bigoplus_{j\in J}T(M_{j})$$ is defined by $T(f_{ij}):T(N_{i})\ra T(M_{j})$, for $i\in I$, $j\in J$. Obviously $\wh{T}$ equals $T$ on $\mod(R)$.

In \cite[Section 5]{P4} we define the functor $\Phi:\CF(R)\ra\CF(A)$ as the functor $\wh{(\cdot)}\circ F_{\bullet}$ and prove that $$\Phi(T)=\wh{T}\circ F_{\bullet}\cong\Coker{}_{A}(-,F_{\lambda}(f)),$$ for any $T=\Coker{}_{R}(*,f)\in\CF(R)$. The density of $F_{\lambda}$ ensures that there are no $A$-modules of the second kind, hence $F_{\bullet}(\mod(A))\subseteq\Add(\mod(R))$ and the definition is correct. Since $\wh{(\cdot)}\circ F_{\bullet}$ is easily seen to be exact, this gives yet another way to show that $\Phi$ is an exact functor, under the assumption that the push-down functor $F_{\lambda}:\mod(R)\ra\mod(A)$ is dense.\footnote{Observe that in the dense case one can define $\Phi:\MOD(\CR)\ra\MOD(\CA)$ in the same way as we define here $\Phi:\CF(R)\ra\CF(A)$.} Note however that the above point of view is very restrictive (nevertheless, sufficient for purposes of \cite{P4} or \cite{J-PP1}). Indeed, although one can show the existence of the functor $\Phi:\CF(R)\ra\CF(A)$ such that $\Phi(\Coker{}_{R}(*,f))=\Coker{}_{A}(-,F_{\lambda}(f))$ quite easily without the assumption that $F_{\lambda}$ is dense, its exactness is not at all straightforward.\footnote{One could try to show that $\Phi$ is both left and right adjoint, but a rather natural proof of this fact given in \cite[Theorem 5.5 (3)]{P4} seems to work only for a dense $F_{\lambda}$.} Moreover, such a definition cannot be lifted to the level of arbitrary functors in any natural way.\footnote{Similarly, one cannot lift naturally the properties/definitions of functors $\Phi,\Psi:\MOD(\CR)\ra\MOD(\CA)$ as shown in Corollary \ref{c2} to $A$-modules of the second kind, existing when $F_{\lambda}$ is not dense.} On the other hand, general definitions based on the tensor products of functors, as given in Section 3, simply do the job but also seem to reveal the true nature of the functor $\Phi:\CF(R)\ra\CF(A)$ as the restriction of $\Phi:\MOD(\CR)\ra\MOD(\CA)$, being the left adjoint to $\Psi:\MOD(\CA)\ra\MOD(\CR)$, which in turn possesses the right adjoint $\Theta:\MOD(\CR)\ra\MOD(\CA)$. \epv
\end{rem}


\begin{rem}\label{r4} Theorem \ref{t4} shows that when $G$ is an admissible torsion-free group of $K$-linear automorphisms of $R$ and $F:R\ra A\cong R\slash G$ is the Galois covering, then the functor $\Phi:\CF(R)\ra\CF(A)$ is a Galois precovering of functor categories. This result does not require the assumption that $F_{\lambda}:\mod(R)\ra\mod(A)$ is dense. If the latter holds, then $\Phi$ is a subfunctor of $\Theta$, which essentially follows from Theorem \ref{t5}, see also Corollary \ref{c2} and Theorem \ref{t6}. Then we can conclude, as in Theorem \ref{t6}, that these functors coincide on the category of finitely presented functors and hence are both exact. It seems reasonable to expect that this situation can be generalized, at least to some extent, to the setting of arbitrary abelian Krull-Schmidt hom-finite categories with a $G$-action. To be more concrete, we presume that Galois precoverings of such categories with a torsion-free group $G$ induce Galois precoverings on the level of functor categories. The exactness of precoverings on the functor (higher) level can be deduced in the case of dense precoverings on the category (lower) level (i.e. in the case of coverings) similarly as in Theorem \ref{t6}. It is unlikely to expect such an exactness result for arbitrary precoverings, because the proof of Theorem \ref{t2} relies on very specific results, as Lemma \ref{l1} and Proposition \ref{p1}. We leave these interesting general questions for further research. \epv
\end{rem}

\subsection{When Galois coverings preserve Krull-Gabriel dimension}

Theorems \ref{t2} and \ref{t3} show that for any Galois $G$-covering $F:R\ra A$ the functor $\Phi:\CF(R)\ra\CF(A)$ is exact and faithful and hence $\KG(R)\leq\KG(A)$. It is shown in \cite{P6} that the opposite inequality $\KG(A)\leq\KG(R)$ holds in case $R$ is locally support-finite, intervally finite and the group $G$ is torsion-free. Since these conditions imply the density of the push-down $F_\lambda:\mod(R)\ra\mod(A)$, we believe it is natural to present here a sketch of the proof of the theorem. 



We introduce some terminology. Assume that $A,B$ are some locally bounded $K$-categories and let $\varphi\colon\mod(A)\ra\mod(B)$ be a $K$-linear additive covariant functor. We define $\Lambda_{\varphi}\colon\CF(B)\ra\CG(A)$ as the composition functor $(-)\circ\varphi$. Observe that if $U\in\CF(B)$ and ${}_{B}(-,X)\xrightarrow{_{B}(-,f)}{}_{B}(-,Y)\ra U\ra 0$, then we have
$${}_{B}(\varphi(-),X)\xrightarrow{_{B}(\varphi(-),f)}{}_{B}(\varphi(-),Y)\ra U\varphi=\Lambda_{\varphi}(U)\ra 0.$$ We call the functor $\varphi \colon \mod(A)\ra\mod(B)$ \textit{admissible} if and only if $\varphi$ is dense and $\Im(\Lambda_{\varphi})\subseteq\CF(A)$ (the terminology is introduced in \cite{J-PP1}). This means that $\Lambda_{\varphi}$ preserves finitely presented functors. Recall that if $\varphi\colon \mod(A)\ra\mod(B)$ is admissible, then $\KG(B)\leq\KG(A)$, because the functor $\Lambda_{\varphi}\colon \CF(B)\ra\CF(A)$ is exact and faithful, see the remark following \cite[Theorem 1.3]{P6} for details.

Assume that $G$ is an admissible group of $K$-linear automorphisms of $R$. A finite convex subcategory $B$ of the category $R$ is called \textit{fundamental domain} \cite{P6} if and only if for any $M\in\ind(R)$ there exists $g\in G$ such that $\supp({}^{g}M)\subseteq B$. We say that a locally bounded $K$-category $R$ is \textit{intervally finite} \cite[2.1]{BoGa} if and only if a convex hull of any finite full subcategory of $R$ is finite.


\begin{thm} \label{t7}
Assume that $R$ is a locally support-finite and intervally finite locally bounded $K$-category. Assume that $G$ is an admissible torsion-free group of $K$-linear automorphisms of $R$ and let $F:R\ra A\cong R\slash G$ be the Galois $G$-covering. The following assertions hold.
\begin{enumerate}[\rm(1)]
  \item There exists a fundamental domain $B$ of $R$.
  \item The functor $F_{\lambda}\circ\CE_{B}\colon\mod(B)\ra\mod(A)$ is admissible.
  \item We have $\KG(A)\leq\KG(B)\leq\KG(R)$.
\end{enumerate}

\end{thm}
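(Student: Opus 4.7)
The plan for assertion (1) is to construct $B$ as a convex hull. Admissibility of $G$ gives finitely many orbit representatives $x_{1},\dots,x_{n}$ of $\ob(R)$; locally support-finiteness ensures that each set $U_{i}:=\bigcup\{\supp(M):M\in\ind(R),\,M(x_{i})\neq 0\}$ is finite; and interval finiteness then guarantees that the convex hull $B$ of $U_{1}\cup\dots\cup U_{n}$ remains finite. To verify the fundamental-domain property, for $M\in\ind(R)$ I would pick any $x\in\supp(M)$ together with $g\in G$ satisfying $g^{-1}x=x_{i}$; then ${}^{g^{-1}}M\in\ind(R)$ has $x_{i}$ in its support, so $\supp({}^{g^{-1}}M)\subseteq U_{i}\subseteq B$.

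For assertion (2), density of $F_{\lambda}\circ\CE_{B}$ combines density of $F_{\lambda}$ from Theorem \ref{0t1}\,(5) with (1): any $N\in\mod(A)$ lifts to some $M\in\mod(R)$ with $F_{\lambda}(M)\cong N$, and decomposing $M$ into indecomposable summands and translating each by (1) yields $N\cong F_{\lambda}(\CE_{B}(M'))$ for some $M'\in\mod(B)$, using the $G$-invariance of $F_{\lambda}$ from Theorem \ref{0t1}\,(1). For preservation of finitely presented functors, let $U\in\CF(A)$ be presented by $f:X\to Y$ in $\mod(A)$; by density I would write $X\cong F_{\lambda}(\CE_{B}X')$ and $Y\cong F_{\lambda}(\CE_{B}Y')$ with $X',Y'\in\mod(B)$, and compute, for $Z\in\mod(B)$,
\[
{}_{A}(F_{\lambda}\CE_{B}Z,F_{\lambda}\CE_{B}X')\cong\bigoplus_{g\in G}{}_{R}(\CE_{B}Z,{}^{g}\CE_{B}X')
\]
via the Galois hom-isomorphism of Remark \ref{0r1}. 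Only finitely many $g$ contribute a nonzero summand, since a nontrivial morphism forces $B\cap gB\neq\emptyset$ and the free $G$-action on finite $B$ admits only finitely many such $g$. For each such $g$, the plan is to show that $Z\mapsto{}_{R}(\CE_{B}Z,{}^{g}\CE_{B}X')$ is representable by a submodule $W_{g}$ of $({}^{g}\CE_{B}X')|_{B}\in\mod(B)$, namely the one cut out by the extra $R$-naturality conditions on boundary arrows $\sigma:a\to b$ with $a\notin B$, $b\in B$. Setting $\widehat{X'}:=\bigoplus_{g}W_{g}\in\mod(B)$ and doing the analogous construction for $Y'$, passage to cokernels produces a presentation ${}_{B}(-,\widehat{X'})\to{}_{B}(-,\widehat{Y'})\to\Lambda_{F_{\lambda}\circ\CE_{B}}(U)\to 0$ in $\CF(B)$.

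Assertion (3) then falls out: admissibility of $F_{\lambda}\circ\CE_{B}$ makes $\Lambda_{F_{\lambda}\circ\CE_{B}}:\CF(A)\to\CF(B)$ exact and faithful, giving $\KG(A)\leq\KG(B)$ by Lemma \ref{0l1}\,(1), and convexity of $B$ in $R$ together with Lemma \ref{0l2}\,(1) gives $\KG(B)\leq\KG(R)$. The principal technical obstacle lies in the representability step of (2): verifying that the extra vanishing conditions indeed cut out an honest $B$-submodule of $({}^{g}\CE_{B}X')|_{B}$ (so that $W_{g}$ is well-defined) and assembling the $W_{g}$ into a genuinely finite direct sum in $\mod(B)$. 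It is exactly at this point that all three assumptions must cooperate simultaneously: interval finiteness keeps $B$ finite and hence each $W_{g}$ finite-dimensional, locally support-finiteness ensures that $\widehat{X'}$ lies in $\mod(B)$, and freeness of the $G$-action on $\ob(R)$ bounds the range of $g$ for which the summand is nonzero.
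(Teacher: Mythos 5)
Your argument is correct and follows the same skeleton as the paper's proof --- construct a fundamental domain $B$, prove that $F_{\lambda}\circ\CE_{B}$ is admissible, and conclude with Lemmas \ref{0l1} and \ref{0l2} --- but the decisive step, preservation of finitely presented functors, is carried out by a genuinely different route. The paper does not use density there: writing $U=\Coker{}_{A}(-,\alpha)$, it applies the adjunction $(F_{\lambda},F_{\bullet})$ to obtain $U\circ F_{\lambda}\cong\Coker{}_{R}(*,F_{\bullet}(\alpha))$ and then the right adjoint $(-)'$ of $\CE_{B}$ from \cite{P6} (where $L'=\sum\Im(h)$ over $R$-homomorphisms $h:X\ra L$ with $X\in\mod(B)$) to land on the presentation $\Coker{}_{B}(*,F_{\bullet}(\alpha)')$ in one stroke. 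You instead first rewrite the presentation of $U$ using density, then invoke the Galois hom-isomorphism of Remark \ref{0r1} and represent ${}_{R}(\CE_{B}(-),{}^{g}\CE_{B}X')$ by the explicit submodule $W_{g}$ cut out by the boundary conditions. This is the same construction in dual clothing: $W_{g}$ coincides with $({}^{g}\CE_{B}X')'$ --- each $\Im(h)$ is an $R$-submodule supported in $B$, so every $L(\sigma)$ with $\sigma\in R(a,b)$, $a\notin\ob(B)$, vanishes on it, giving $L'\subseteq W_{g}$, while $W_{g}$ is itself a $B$-module mapping into ${}^{g}\CE_{B}X'$, giving the reverse inclusion --- and your $\widehat{X'}=\bigoplus_{g}W_{g}$ is exactly $F_{\bullet}(F_{\lambda}\CE_{B}X')'$. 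The ``principal technical obstacle'' you flag is in fact routine: $W_{g}$ is a $B$-submodule because the composite of a morphism $a\ra b$ with $a\notin\ob(B)$ and a morphism $b\ra b'$ inside $B$ is again a morphism from $a$ into $B$, so the defining kernel conditions are stable under the $B$-action; the finiteness of $\{g\in G:gB\cap B\neq\emptyset\}$ follows from free action on the finite set $\ob(B)$ as you say, and the induced map $\widehat{X'}\ra\widehat{Y'}$ exists by Yoneda. What each approach buys: the paper's is shorter and establishes that $\Lambda_{F_{\lambda}\circ\CE_{B}}$ preserves finite presentation without ever invoking density, whereas yours makes the representing $B$-module completely explicit. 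For (1) the paper merely cites \cite[Proposition 1.2]{P6}; your convex-hull construction of $B$ from the orbit representatives is the standard one and is correct, and (3) is identical in both treatments.
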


\begin{proof} $(1)$ We refer to \cite[Proposition 1.2]{P6} for the proof of this assertion. We only mention that the interval finiteness of $R$ was crucial to conclude that a convex hull of some finite subcategory of $R$ is finite.\footnote{The existence of a fundamental domain $B$ of $R$ already implies that $R$ is locally support-finite. Indeed, if $R$ is not locally support-finite, then there are indecomposable finite dimensional $R$-modules with arbitrarily large finite supports, and thus there is $M\in\ind(R)$ such that $|\supp(M)|>|B|$. This yields there is no $g\in G$ with $\supp({}^{g}M)\subseteq B$ and so $B$ is not a fundamental domain of $R$.}

$(2)$ Set $G=F_{\lambda}\circ\CE_{B}$ and assume that $U=\Coker{}_{A}(-,\alpha)\in\CF(A)$, for some $A$-homomorphism $\alpha$. We show that $U\circ G\in\CF(B)$. Indeed, recall that $(F_{\lambda},F_{\bullet})$ is an adjoint pair, hence $U\circ F_{\lambda}\cong\Coker{}_{R}(*,F_{\bullet}(\alpha))$ and thus $$U\circ G=(U\circ F_{\lambda})\circ\CE_{B}\cong\Coker{}_{R}(\CE_{B}(*),F_{\bullet}(\alpha)).$$ Recall from \cite{P6} that the functor of extension by zeros $$\CE_{B}\colon\mod(B)\ra\mod(R)\subseteq\Mod(R)$$ has the right adjoint $(-)':\Mod(R)\ra\mod(B)$ such that for any $L\in\Mod(R)$ we have $$L'=\sum_{g:X\ra L}\Im(g)\subseteq L$$ where $X\in\mod(B)$ and $g:X\ra L$ is an $R$-homomorphism. We refer to the proof of \cite[Proposition 1.1]{P6} for the details about this fact. Since $F_{\bullet}(\alpha)$ is an $R$-homomorphism in $\Mod(R)$, we further obtain $$\Coker{}_{R}(\CE_{B}(*),F_{\bullet}(\alpha))\cong\Coker{}_{B}(*,F_{\bullet}(\alpha)')\in\CF(B)$$ which shows that $U\circ G\in\CF(B)$. Now we show that $G=F_{\lambda}\circ\CE_{B}$ is dense. Indeed, assume that $X\in\ind(A)$. Since $F_{\lambda}$ is dense, we get $X\cong F_{\lambda}(M)$, for some $M\in\ind(R)$. Since $B$ is a fundamental domain of $R$, there is $g\in G$ such that $\supp({}^{g}M)\subseteq B$. Thus we obtain isomorphisms $X\cong F_{\lambda}(M)\cong F_{\lambda}({}^{g}M)\cong F_{\lambda}(\CE_{B}({}^{g}M))$ which show that the functor $G$ is dense.

$(3)$  We get $\KG(A)\leq\KG(B)$ by $(2)$, because $G=F_{\lambda}\circ\CE_{B}\colon\mod(B)\ra\mod(A)$ is admissible and thus $\Lambda_{G}:\CF(A)\ra\CF(B)$ is exact and faithful. Moreover, we have $\KG(B)\leq\KG(R)$ by Lemma \ref{0l2}. Hence $\KG(A)\leq\KG(B)\leq\KG(R)$ which finishes the proof of the theorem.
\end{proof}

\begin{rem}\label{r5} Observe that the proof above does apply the torsion-freeness of the group only indirectly. Indeed, this condition seems to be important in the fundamental papers \cite{DoSk} or \cite{DoLeSk} to deduce the density of the push-down $F_\lambda:\mod(R)\ra\mod(A)$ (which is crucial in our argumentation) from the locally support-finiteness of $R$.

\end{rem}

We recall the following theorem which is one of main results of the papers \cite{P4,P6}.


\begin{thm}\label{t8} Assume that $R$ is a locally support-finite and intervally finite locally bounded $K$-category. Assume that $G$ is an admissible torsion-free group of $K$-linear automorphisms of $R$ and let $F:R\ra A\cong R\slash G$ be the Galois $G$-covering. Then there exists a fundamental domain $B$ of $R$ and we have $\KG(R)=\KG(B)=\KG(A)$.
\end{thm}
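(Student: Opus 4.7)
The plan is to deduce Theorem \ref{t8} as a direct combination of two results that are already available in the excerpt: Theorem \ref{t7} (the ``downward'' direction, provided by the admissible functor built on a fundamental domain) and Theorem \ref{t3} (the ``upward'' direction, which is the general fact proved via the exact faithful functor $\Phi:\CF(R)\ra\CF(A)$). The strategy is therefore to produce a cycle of inequalities closing up to equality, so essentially nothing new has to be invented; the work was done earlier.

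More precisely, first I would invoke Theorem \ref{t7}$(1)$ to obtain a fundamental domain $B$ of $R$, which exists because $R$ is assumed locally support-finite and intervally finite. Then Theorem \ref{t7}$(3)$ immediately yields the chain
\[
\KG(A)\leq\KG(B)\leq\KG(R),
\]
the first inequality coming from the admissibility of $F_{\lambda}\circ\CE_{B}:\mod(B)\ra\mod(A)$ (Theorem \ref{t7}$(2)$, which implicitly uses the density of $F_{\lambda}$ guaranteed by Theorem \ref{0t1}$(5)$ under the torsion-free and locally support-finite hypotheses), and the second from Lemma \ref{0l2} applied to the convex subcategory $B\preceq R$.

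Next I would apply Theorem \ref{t3} to the Galois $G$-covering $F:R\ra A$, giving the reverse inequality $\KG(R)\leq\KG(A)$. This step does not use the hypotheses of local support-finiteness, interval-finiteness, or torsion-freeness; it only uses that $\Phi:\CF(R)\ra\CF(A)$ is exact and faithful, as established in Theorem \ref{t2}. Chaining these yields
\[
\KG(R)\leq\KG(A)\leq\KG(B)\leq\KG(R),
\]
so all three numbers coincide, which is the claim.

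Since the proof is essentially a one-line assembly once Theorems \ref{t3} and \ref{t7} are in hand, there is no real obstacle at this stage. The genuine difficulty was packaged upstream: on the one hand, the existence of a fundamental domain and the admissibility of $F_{\lambda}\circ\CE_{B}$ (which rests on the subtle interplay of local support-finiteness, interval-finiteness, density of the push-down, and the right adjoint $(-)':\Mod(R)\ra\mod(B)$ of $\CE_{B}$); on the other hand, the general inequality $\KG(R)\leq\KG(A)$, whose proof required the tensor-product construction of $\Phi$ and the technical Lemma \ref{l1}/Proposition \ref{p1} to identify $\Phi|_{\CF(R)}$ with $\Theta|_{\CF(R)}$. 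With those two pillars in place, the theorem is immediate.
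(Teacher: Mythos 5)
Your proposal is correct and follows exactly the paper's own argument: the existence of the fundamental domain and the chain $\KG(A)\leq\KG(B)\leq\KG(R)$ come from Theorem \ref{t7}, and the reverse inequality $\KG(R)\leq\KG(A)$ comes from Theorem \ref{t3}, closing the cycle. The paper's proof is the same one-line assembly of these two results.
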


\begin{proof} The inequalities $\KG(R)\leq\KG(A)\leq\KG(B)\leq\KG(R)$ follow from Theorem \ref{t3} and Theorem \ref{t7}.
\end{proof}

Recall that $(\CF(R)_{\alpha})_{\alpha}$ and $(\CF(A)_{\alpha})_{\alpha}$ denote the Krull-Gabriel filtrations of $\CF(R)$ and $\CF(A)$, respectively. We finish this section with a theorem showing that, under the same assumptions as in Theorems \ref{t7} and \ref{t8}, the functor $\Phi:\CF(R)\ra\CF(A)$ induces Galois precoverings $\Phi:\CF(R)_{\alpha}\ra\CF(A)_{\alpha}$, for any ordinal number $\alpha$. All arguments are implicitly contained in \cite{P4,P6} (see especially \cite[Theorem 1.5]{P6}) but we believe it is convenient to be formulated directly.

\begin{thm}\label{t9} Assume that $R$ is a locally support-finite $K$-category, $G$ an admissible torsion-free group of $K$-linear automorphisms of $R$ and $F:R\ra A\cong R\slash G$ the Galois covering. Assume that $R$ is intervally finite and $B$ is a fundamental domain of $R$. The following assertions hold.
\begin{enumerate}[\rm(1)]
  \item We have $T\in\CF(R)_{\alpha}\Longleftrightarrow\Phi(T)\in\CF(A)_{\alpha}$, for any $T\in\CF(R)$ and ordinal number $\alpha$.
  \item The (restricted) functor $\Phi:\CF(R)_{\alpha}\ra\CF(A)_{\alpha}$ is a Galois precovering, for any ordinal number $\alpha$.
\end{enumerate}
\end{thm}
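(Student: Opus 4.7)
The plan is to establish (1) in both directions (uniformly in $\alpha$), using the covering machinery assembled in Sections 3 and 4.3, and then to deduce (2) by restricting the Galois precovering properties of $\Phi$ from Theorem \ref{t4} to each level of the filtration. The $\Longleftarrow$ direction of (1), namely $\Phi(T)\in\CF(A)_\alpha\Rightarrow T\in\CF(R)_\alpha$, is immediate from Theorem \ref{t2}: the proof of Lemma \ref{0l1}(1) actually establishes the stronger level-wise fact that any exact faithful functor reflects the Krull-Gabriel filtration, and $\Phi$ is both exact and faithful.

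For the $\Longrightarrow$ direction, the key idea is to factor through the admissible functor $\Lambda_{G}:\CF(A)\ra\CF(B)$ with $G=F_{\lambda}\circ\CE_B$ from Theorem \ref{t7}. By Theorem \ref{t5}, $\Lambda_{G}(\Phi(T))\cong(\bigoplus_{g\in G}gT)\circ\CE_B\cong\bigoplus_{g\in G}(gT)|_B$, and the subset $G_T=\{g\in G:(gT)|_B\neq 0\}$ is forced to be finite by local support-finiteness of $R$ combined with finiteness of $B$, finiteness of the supports of a finite presentation of $T$, and freeness of the $G$-action on objects of $R$; the argument is analogous to Proposition \ref{p2}(2). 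Next, Proposition \ref{p2}(1) confirms that $\CF(R)_\alpha$ is $G$-stable, and the restriction functor $r_B:\CF(R)\ra\CF(B)$ is exact, full, and dense (as recalled before Lemma \ref{0l2}); a transfinite-induction argument parallel to the proof of Lemma \ref{0l1}(1), but now exploiting that exact-full-dense functors send simple objects to simples or to zero, shows $r_B(\CF(R)_\alpha)\subseteq\CF(B)_\alpha$. Hence each $(gT)|_B\in\CF(B)_\alpha$, their finite direct sum stays in the Serre subcategory $\CF(B)_\alpha$, and applying Lemma \ref{0l1}(1) to the exact faithful $\Lambda_G$ yields $\Phi(T)\in\CF(A)_\alpha$, completing (1).

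Part (2) should then follow swiftly. Theorem \ref{t4} already gives that $\Phi:\CF(R)\ra\CF(A)$ is a Galois $G$-precovering (the hypothesis that $G$ is torsion-free is in force), and Proposition \ref{p2}(3) gives freeness of the $G$-action, which restricts to each $\CF(R)_\alpha$ thanks to (1). The four axioms of Definition \ref{0d1} --- invariance $\Phi(gT)\cong\Phi(T)$, orbit description $\Phi(T_1)\cong\Phi(T_2)\Rightarrow T_2\cong gT_1$, preservation of indecomposability, and the hom-space decomposition $\bigoplus_{g\in G}{}_{\CR}(gT,T')\cong{}_{\CA}(\Phi(T),\Phi(T'))$ --- restrict verbatim from Theorem \ref{t4}, because $\CF(R)_\alpha\subseteq\CF(R)$ and $\CF(A)_\alpha\subseteq\CF(A)$ are full subcategories carrying the same hom-spaces.

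The hard part will be the cofiniteness of $G_T$ together with the precise transfinite claim that $r_B$ preserves each Krull-Gabriel level, since this requires carefully unpacking how exact-full-dense functors interact with the inductive definition of the filtration. The remainder is a careful assembly of Theorems \ref{t2}, \ref{t4}, \ref{t5}, \ref{t7}, Propositions \ref{p2}, \ref{p3}, and Lemmas \ref{0l1}, \ref{0l2}; none of these is individually difficult, but the correct chaining of them is what makes the level-wise statement work.
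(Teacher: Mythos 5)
Your proposal is correct and takes essentially the same route as the paper's proof: both factor through the fundamental domain via the exact faithful functor $\Lambda=(-)\circ(F_{\lambda}\circ\CE_{B})$, use $\Lambda(\Phi(T))\cong\bigoplus_{g\in H}(gT)\circ\CE_{B}$ with $H$ finite, and combine preservation of the filtration by the exact, full and dense $r_{B}$ with reflection of the filtration by exact faithful functors, before deducing (2) by restricting Theorem \ref{t4}. The only cosmetic differences are that you argue the forward implication directly where the paper argues by contrapositive, and that the $G$-stability of each $\CF(R)_{\alpha}$ should be attributed to the action of $G$ by exact autoequivalences (the paper cites \cite{P4}, Proposition 6.1 (2)) rather than to Proposition \ref{p2} (1), which only gives stability of $\CF(R)$ itself.
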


\begin{proof} (1) Since the functor $\Phi:\CF(R)\ra\CF(A)$ is exact and faithful, we conclude from \cite[Appendix B]{Kr} that $\Phi(T)\in\CF(A)_{\alpha}$ implies $T\in\CF(R)_{\alpha}$, for any $T\in\CF(R)$ and any ordinal $\alpha$, see also the proof of Lemma \ref{0l2}. We show below the converse implication.

Assume that $T\in\CF(R)$ and $T=\Coker{}_{R}(-,f)$ where $f:M\ra N$. Recall that Theorem 5.5 $(2)$ from \cite{P4} (or Theorems \ref{t4} (2) or \ref{t5}) yields $\Phi(T)F_{\lambda}\cong\bigoplus_{g\in G}gT$ and so we obtain $$(\Phi(T)\circ F_{\lambda})\circ\CE_{B}\cong(\bigoplus_{g\in G}gT)\circ\CE_{B}\cong\bigoplus_{g\in G}((gT)\circ\CE_{B}).$$ Observe that $(gT)\circ\CE_{B}\neq 0$ only for finite number of $g\in G$. Indeed, if $(gT)\circ\CE_{B}\neq 0$, then $0\neq {}_{R}(X,{}^{g}N)$, for some indecomposable $X\in\mod(R)$ with $\supp(X)\subseteq B$. Hence $\supp({}^{g}N)\cap B\neq\emptyset$, so the claim follows (see the remark on page 4 of \cite{P6}). Define $\Lambda:\CF(A)\ra\CF(B)$ as $\Lambda=(-)\circ (F_{\lambda}\circ\CE_{B})$ and observe that the above arguments yield a finite set $H\subseteq G$ such that $$\Lambda(\Phi(T))\cong(\bigoplus_{g\in H}gT)\circ\CE_{B}\cong r_{B}(\bigoplus_{g\in H}gT)$$ where $r_{B}:\CF(R)\ra\CF(B)$ is the restriction functor. Assume that $\alpha$ is an ordinal number and $\Phi(T)\notin\CF(A)_{\alpha}$. Since the functor $\Lambda$ is exact and faithful, we get $\Lambda(\Phi(T))\notin\CF(B)_{\alpha}$ and thus we obtain $r_{B}(\bigoplus_{g\in H}gT)\notin\CF(B)_{\alpha}$. Since the functor $r_{B}$ is exact, full and dense, we conclude from \cite[Appendix B]{Kr} that $\bigoplus_{g\in H}gT\notin\CF(R)_{\alpha}$. The category $\CF(R)_{\alpha}$ is closed under direct summands, so $gT\notin\CF(R)_{\alpha}$, for some $g\in H$, and finally $T\notin\CF(R)_{\alpha}$ since $\CF(R)_{\alpha}$ is closed under the action of $G$ by \cite[Proposition 6.1 (2)]{P4}.

(2) It follows from \cite[Proposition 6.1]{P4} that the Krull-Gabriel filtration of the category $\CF(R)$ is $G$-invariant. Theorem \ref{t4} implies that $\Phi:\CF(R)\ra\CF(A)$ is a Galois precovering. Moreover, the assertion of $(1)$ yields $\Phi(\CF(R)_{\alpha})\subseteq\CF(A)_{\alpha}$, for any ordinal number $\alpha$. These three fact altogether show the thesis.  
\end{proof}

\begin{rem}\label{r6} Theorem \ref{t9} (1) gives yet another proof that $\KG(R)=\KG(A)$. Indeed, we show in \cite[Lemma 3.2]{P4} a general fact that for any exact functor $F:\CC\ra\CD$ between abelian categories $\CC,\CD$ such that for any $U\in\CD$ there is an epimorphism $\epsilon:F(T)\ra U$ we have $\KG(\CC)=\KG(\CD)$ if the condition $$T\in\CC_{\alpha}\Longleftrightarrow F(T)\in\CD_{\alpha}$$ is satisfied, for any ordinal number $\alpha$.\footnote{Recall that $(\CC_{\alpha})_{\alpha},(\CD_{\alpha})_{\alpha}$ are Krull-Gabriel filtrations of $\CC,\CD$, respectively.} Since $F_{\lambda}$ is dense, we get that any $U\in\CF(A)$ is of the form $\Coker{}_{A}(-,\alpha)\in\CF(A)$ for some $\alpha:F_{\lambda}(X)\ra F_{\lambda}(Y)$. Thus there is an epimorphism $\epsilon:{}_{A}(-,F_{\lambda}(Y))\ra U$ and \cite[Lemma 3.2]{P4} directly yields $\KG(R)=\KG(A)$. 
\end{rem}

\section{The functors of the first and the second kind}

This section presents examples of Galois coverings $F:R\ra A$ for which the functor $\Phi:\CF(R)\ra\CF(A)$ is either dense or not. In these examples $R$ is a simply connected locally representation-finite locally bounded $K$-category. This situation simplifies considerations, because the locally representation-finiteness of $R$ implies that the categories $\ind(R),\ind(A)$ are locally bounded (see Proposition \ref{p4}), $F_{\lambda}:\ind(R)\ra\ind(A)$ is a Galois covering itself (see Theorem \ref{t10}) and $\Phi\cong (F_\lambda)_{\lambda}$ (see Proposition \ref{p5}). Furthermore, the simple connectedness of $R$ allows to view $F_{\lambda}:\ind(R)\ra\ind(A)$ as a Galois covering of the associated mesh-categories and so $(F_\lambda)_{\lambda}\cong\Phi$ becomes a Galois covering on the module level which is much better understood, see Theorem \ref{t11}. Based on Theorem \ref{t11} we show in Example \ref{e1} that the functor $\Phi:\CF(R)\ra\CF(A)$ is not dense in general. In this way we confirm the conjecture formulated in \cite[Remark 5.6]{P4}. 

Therefore it is natural to introduce the following terminology, originated in \cite{DoSk} for the case of modules over locally bounded $K$-categories.

\begin{df}\label{d1} Assume that $F:R\ra A$ is a Galois $G$-covering with a torsion-free group $G$ and let $\Phi_{F}:\CF(R)\ra\CF(A)$ be the associated Galois $G$-precovering of functor categories. We call an indecomposable functor $U\in\CF(A)$ a \emph{functor of the first kind} if and only if $U$ lies in the image of $\Phi_{F}$. Otherwise, we call $U$ a \emph{functor of the second kind}.
\end{df}

Recall from Proposition \ref{p3} that categories of finitely presented functors are abelian Krull-Schmidt categories. This implies that the density of $\Phi_{F}:\CF(R)\ra\CF(A)$ is equivalent with the condition that any functor $U\in\CF(A)$ is of the first kind.


\subsection{The locally representation-finite case}

We formulate the following well-known fact and give its simple proof for convenience.

\begin{prop}\label{p4} Assume that $R$ is a locally bounded $K$-category. Then $R$ is locally representation-finite if and only if the category $\ind(R)$ is locally bounded.
\end{prop}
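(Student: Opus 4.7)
The plan is to use the characterization of locally boundedness for $\ind(R)$ unfolded as: objects of $\ind(R)$ have local endomorphism algebras (automatic, since they are indecomposable finite-dimensional), distinct objects are non-isomorphic (automatic, since $\ind(R)$ is formed by representatives of isoclasses), and for every $M\in\ind(R)$ both sums $\sum_{N\in\ind(R)}\dim_{K}{}_{R}(M,N)$ and $\sum_{N\in\ind(R)}\dim_{K}{}_{R}(N,M)$ are finite. So the proof reduces to linking these hom-finiteness conditions to the condition that, for every $x\in\ob(R)$, only finitely many (isomorphism classes of) $N\in\ind(R)$ satisfy $N(x)\neq 0$.

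For the implication ``locally representation-finite $\Rightarrow$ $\ind(R)$ locally bounded'', I would fix $M\in\ind(R)$ and use that $\supp(M)=\{x_{1},\dots,x_{n}\}$ is finite because $M\in\mod(R)$. If $f:M\to N$ is a nonzero $R$-homomorphism with $N\in\ind(R)$, then $\Im(f)\neq 0$ and $\supp(\Im(f))\subseteq\supp(M)$, so $N(x_{i})\neq 0$ for at least one $i$. Local representation-finiteness at each $x_{i}$ then bounds the number of such $N$. The same argument, applied to the reverse direction (images of maps into $M$ are supported in $\supp(M)$), handles $\Hom(N,M)$. Each individual hom-space is bounded by $\dim_K M\cdot\dim_K N$ and hence finite-dimensional, so the two sums are finite.

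For the converse, I would invoke the Yoneda lemma via the indecomposable projective. For every $x\in\ob(R)$ the module $P_{x}=R(-,x)$ lies in $\ind(R)$, since $\End_{R}(P_{x})\cong R(x,x)$ is local by the definition of a locally bounded $K$-category. Yoneda gives ${}_{R}(P_{x},N)\cong N(x)$ naturally in $N$, so $N(x)\neq 0$ if and only if ${}_{R}(P_{x},N)\neq 0$. Local boundedness of $\ind(R)$ applied to $M=P_{x}$ yields $\sum_{N\in\ind(R)}\dim_{K}{}_{R}(P_{x},N)<\infty$, which forces only finitely many $N\in\ind(R)$ to have $N(x)\neq 0$; this is precisely local representation-finiteness at $x$.

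No step is really hard, but the one deserving the most care is the forward direction, where we must check hom-finiteness both ways without double-counting, and we must use that $M\in\ind(R)\subseteq\mod(R)$ so that $\supp(M)$ is finite (this is where the standing finite-dimensionality of objects of $\ind(R)$ is essential).
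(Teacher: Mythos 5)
Your proposal is correct and follows essentially the same route as the paper: the forward direction via the observation that a nonzero homomorphism forces the supports to intersect (so local representation-finiteness at the finitely many points of $\supp(M)$ bounds the relevant indecomposables), and the converse via the Yoneda isomorphism $N(x)\cong{}_{R}(P_{x},N)$ applied to the indecomposable projective $P_{x}$. Your explicit remark that $P_{x}\in\ind(R)$ because $R(x,x)$ is local is a detail the paper leaves implicit, but the argument is the same.
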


\begin{proof} It is easy to see that $\ind(R)$ is locally bounded if and only if the hom-functors ${}_{R}(*,X)$ and ${}_{R}(X,*)$ have finite supports, for any $X\in\ind(R)$. 

Assume that $R$ is locally representation-finite and let $X\in\ind(R)$. We show that the functor ${}_{R}(*,X)$ has finite support (arguments for the functor ${}_{R}(X,*)$ are analogous). Indeed, let $R_a$ be the finite set of all indecomposable modules in $\mod(R)$ which are nonzero in $a\in\ob(R)$. Let $\CX$ be the union of all the sets $R_{a}$ such that $a\in\supp(X)$. If $Y\in\ind(R)$ and ${}_{R}(Y,X)\neq 0$, then $\supp(X)\cap\supp(Y)\neq\emptyset$ and hence $Y\in\CX$. Since $\CX$ is finite, we conclude that ${}_{R}(*,X)$ has finite support. Thus $\ind(R)$ is locally bounded.

On the other hand, assume that the hom-functors have finite supports. We prove that $R$ is locally representation-finite. Let $a\in\ob(R)$ and recall that $X(a)\cong{}_{R}(P_a,X)$, for any indecomposable module $X\in\mod(R)$, and hence $X(a)\neq 0$ if and only if ${}_{R}(P_a,X)\neq 0$. Since the support of the functor ${}_{R}(P_a,*)$ is finite, we conclude that there is a finite number of indecomposable $R$-modules $X\in\mod(R)$ with $X(a)\neq 0$. Thus $R$ is locally representation-finite. 
\end{proof}

We recall the following classical result of covering theory of locally bounded locally representation finite $K$-categories, see \cite{Ga,BoGa,BrGa,MP} for the details. We denote by $\Gamma_{R}$ the Auslander-Reiten quiver of a locally bounded $K$-category $R$ whose vertices are isomorphism classes of indecomposable $R$-modules, denoted by $[X]$, for any indecomposable module $X\in\mod(R)$.

\begin{thm}\label{t10} Assume that $R$ is a locally representation-finite locally bounded $K$-category, $G$ an admissible torsion-free group $K$-linear automorphisms of $R$ and $F:R\ra A$ the associated Galois covering. The following assertions hold.

\begin{enumerate}[\rm(1)]
 \item The category $A$ is representation-finite.
	\item The push-down functor $F_{\lambda}:\ind(R)\ra\ind(A)$ is a Galois covering of locally bounded $K$-categories which induces a Galois covering $\Gamma_{R}\ra\Gamma_{A}$ of translation quivers such that $[X]\mapsto[F_{\lambda}(X)]$, for any $X\in\ind(R)$.
\end{enumerate}
\end{thm}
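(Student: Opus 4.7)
My plan is to reduce the assertions to the axioms of a Galois covering, relying on the results of Theorem \ref{0t1} and Proposition \ref{p4}. The key preliminary observation is that locally representation-finite implies locally support-finite: if $x\in\ob(R)$, then only finitely many $X\in\ind(R)$ satisfy $X(x)\neq 0$, and each such $X$ has finite support, so the union of these supports is finite. Consequently, Theorem \ref{0t1} (5) applies and the push-down $F_\lambda:\mod(R)\ra\mod(A)$ is dense. Moreover, since $G$ is torsion-free, Theorem \ref{0t1} (3) guarantees that $G$ acts freely on $\ind(R)$ and $F_\lambda$ preserves indecomposability, so the restriction $F_\lambda:\ind(R)\ra\ind(A)$ is well-defined.

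For assertion $(1)$, since $G$ is admissible, there are finitely many $G$-orbits on $\ob(R)$; pick representatives $a_1,\dots,a_n$. For every $X\in\ind(R)$ there exists $g\in G$ such that $\supp({}^gX)$ meets $\{a_1,\dots,a_n\}$, and then $F_\lambda(X)\cong F_\lambda({}^gX)$ by Theorem \ref{0t1} (1). Thus every element of $\ind(A)$ is (by density) of the form $F_\lambda(Y)$ for some $Y\in\ind(R)$ with $Y(a_i)\neq 0$ for some $i$. Locally representation-finiteness bounds the number of such $Y$, so $\ind(A)$ has finitely many isomorphism classes.

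For assertion $(2)$, the domain $\ind(R)$ is locally bounded by Proposition \ref{p4}, the codomain $\ind(A)$ is finite by $(1)$, and $G$ acts freely on the objects of $\ind(R)$ as noted above. The four defining conditions of a Galois $G$-covering functor $F_\lambda:\ind(R)\ra\ind(A)$ are then read off directly from Theorem \ref{0t1}: the bimodule isomorphisms $\bigoplus_{g\in G}\ind(R)({}^gX,Y)\cong\ind(A)(F_\lambda(X),F_\lambda(Y))\cong\bigoplus_{g\in G}\ind(R)(X,{}^gY)$ come from (4); the surjectivity on (isomorphism classes of) objects is the density established above; the equality $F_\lambda g=F_\lambda$ is (1); and the orbit condition $F_\lambda(X)\cong F_\lambda(Y)\Rightarrow Y\cong {}^gX$ is the second part of (2).

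For the induced Galois covering $\Gamma_R\ra\Gamma_A$ of translation quivers, I would use the fact, quoted in the paragraph after Remark \ref{0r1}, that under the torsion-free hypothesis $F_\lambda$ preserves left and right minimal almost split morphisms as well as Auslander--Reiten sequences. This implies that $[X]\mapsto[F_\lambda(X)]$ sends irreducible morphisms to irreducible morphisms and commutes with the Auslander--Reiten translation, hence is a morphism of translation quivers. The defining conditions of a Galois covering of translation quivers (bijectivity on arrows locally around each vertex, $G$-equivariance, and the correct orbit description) transfer from the $K$-categorical level, because the number of arrows $[X]\ra[Y]$ in $\Gamma_R$ equals $\dim_K\rad(X,Y)/\rad^2(X,Y)$ and the hom-space decomposition from Theorem \ref{0t1} (4) passes to radicals and their squares. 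The main technical obstacle in a fully detailed write-up is precisely this last verification of the translation-quiver covering axioms; however, for a paper of this scope I would invoke the classical references \cite{Ga,BoGa,BrGa,MP} where the argument was originally carried out.
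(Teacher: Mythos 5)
The paper does not prove Theorem \ref{t10} at all: it is stated as a recalled classical result with the proof delegated entirely to \cite{Ga,BoGa,BrGa,MP}, so there is no internal argument to compare yours against. That said, your sketch is a sound reconstruction of the standard argument and is consistent with the machinery the paper sets up. The reduction ``locally representation-finite $\Rightarrow$ locally support-finite'' is correct and, combined with torsion-freeness, legitimately yields density of $F_{\lambda}$ via Theorem \ref{0t1} (5); the finiteness of $\ind(A)$ via orbit representatives $a_{1},\dots,a_{n}$ is the classical argument; and reading the four Galois-covering axioms for $F_{\lambda}:\ind(R)\ra\ind(A)$ off Theorem \ref{0t1} is exactly how the claim is established in the cited sources. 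Two points deserve the extra care you partly acknowledge: first, Theorem \ref{0t1} (1) only gives $F_{\lambda}({}^{g}M)\cong F_{\lambda}(M)$, whereas the definition of a Galois covering requires the strict equality $F_{\lambda}g=F_{\lambda}$ on the category $\ind(R)$ of chosen representatives, so one must fix the $G$-action on $\ind(R)$ (and the identifications $F_{\lambda}({}^{g}X)=F_{\lambda}(X)$, as in Remark \ref{0r1}) compatibly -- a routine but necessary normalization; second, the passage from the hom-space decomposition of Theorem \ref{0t1} (4) to the corresponding decomposition of $\rad/\rad^{2}$, which gives the arrow counts in $\Gamma_{A}$, is the genuinely technical step, and deferring it to \cite{BoGa,BrGa} is acceptable here since the paper itself does no more than that.
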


If $F_{\lambda}:\ind(R)\ra\ind(A)$ is a Galois covering of locally bounded $K$-categories, as in the above theorem, $F_\lambda$ induces the push-down functor $$(F_{\lambda})_{\lambda}:\mod(\ind(R))\ra\mod(\ind(A)).$$ It turns out that the functor $(F_{\lambda})_{\lambda}$ is equivalent with the functor $\Phi:\CF(R)\ra\CF(A)$, denoted in the sequel by $\Phi_{F}$. This is shown in the next proposition. 

\begin{prop}\label{p5} Assume that $R$ is a locally representation-finite locally bounded $K$-category. The following assertions hold.
\begin{enumerate}[\rm(1)]
	\item There is an exact equivalence of categories $\eta_{R}:\mod(\ind(R))\ra\CF(R)$.
	\item Assume that $G$ is an admissible torsion-free group of $K$-linear automorphisms of $R$ and $F:R\ra A$ is the associated Galois covering. Then the following diagram $$\xymatrix{\mod(\ind(R))\ar[rr]^{(F_{\lambda})_{\lambda}}\ar[d]^{\eta_{R}}&&{}\mod(\ind(A))\ar[d]^{\eta_{A}}\\\CF(R)\ar[rr]^{\Phi_{F}}&& \CF(A)}$$ is commutative. In consequence, $(F_{\lambda})_{\lambda}\cong\Phi_{F}$.
\end{enumerate}
\end{prop}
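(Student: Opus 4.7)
For (1), I would invoke the standard change-of-objects principle for finitely presented functors over Krull--Schmidt categories. By Proposition \ref{p4}, the local representation-finiteness of $R$ ensures that $\ind(R)$ is locally bounded, so $\mod(\ind(R))$ is well-defined. The functor $\eta_R$ is defined on presentations: given $N\in\mod(\ind(R))$ with a projective presentation
$$\ind(R)(*,X)\xrightarrow{\ind(R)(*,f)}\ind(R)(*,Y)\ra N\ra 0$$
coming from a morphism $f:X\ra Y$ in $\ind(R)$, set $\eta_R(N)=\Coker({}_R(*,f))\in\CF(R)$. The quasi-inverse is the restriction functor $r_R:T\mapsto T|_{\ind(R)}$. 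Well-definedness of $\eta_R$ at the level of morphisms reduces via the Yoneda lemma to the identification $\ind(R)(X,Y)\cong{}_R(X,Y)$ for $X,Y\in\ind(R)$. That $r_R\circ\eta_R\cong\id$ is immediate, while $\eta_R\circ r_R\cong\id$ follows because every $T\in\CF(R)$ is determined by its restriction to $\ind(R)$ through the Krull--Schmidt decomposition $\mod(R)=\add(\ind(R))$ and the additivity of $T$. Exactness of both $\eta_R$ and $r_R$ is automatic, since kernels and cokernels of natural transformations are computed pointwise.

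For (2), I would verify commutativity on projective generators and then extend by right exactness. Take $N$ as above. By construction, $\eta_R(N)=\Coker({}_R(*,f))$, and the explicit description of $\Phi_F$ on cokernels (recalled in Remark \ref{r3} and obtained in the proof of Theorem \ref{t2}) gives
$$\Phi_F(\eta_R(N))\cong\Coker({}_A(-,F_\lambda(f))).$$
On the other route, Theorem \ref{t10} says that $F_\lambda:\ind(R)\ra\ind(A)$ is itself a Galois $G$-covering of locally bounded $K$-categories, with $F_\lambda$ preserving indecomposability by Theorem \ref{0t1} (3) (using that $G$ is torsion-free). Its push-down $(F_\lambda)_\lambda$ is right exact (as a left adjoint) and acts on representables by $\ind(R)(*,X)\mapsto\ind(A)(-,F_\lambda(X))$, so
$$(F_\lambda)_\lambda(N)\cong\Coker(\ind(A)(-,F_\lambda(f))),$$
and applying $\eta_A$ yields $\Coker({}_A(-,F_\lambda(f)))$. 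Thus both paths agree on $N$, and naturality in $N$ is routine: a morphism $N_1\ra N_2$ is lifted to a morphism of presentations (uniquely up to a standard homotopy), and the induced maps between cokernels coincide under both compositions.

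The main obstacle is the careful book-keeping in (1) needed to establish $\eta_R$ as a genuine equivalence rather than merely a bijection on objects --- one must show that its action on morphisms is independent of the choice of projective presentation, which is the usual subtle point of the change-of-objects equivalence. Once (1) is firmly in place, (2) becomes essentially tautological: both $\Phi_F\circ\eta_R$ and $\eta_A\circ(F_\lambda)_\lambda$ are right exact functors extending the same assignment $\ind(R)(*,X)\mapsto{}_A(-,F_\lambda(X))$ on projective generators, so a standard comparison of projective resolutions delivers the natural equivalence $(F_\lambda)_\lambda\cong\Phi_F$.
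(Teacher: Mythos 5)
Your proposal is correct and follows essentially the same route as the paper: part (2) is verified exactly as in the text, by pushing a projective presentation through both legs of the square and comparing the resulting cokernels (noting that $(F_\lambda)_\lambda$ sends $\ind(R)(-,X)$ to $\ind(A)(-,F_\lambda(X))$). The only cosmetic difference is in (1), where the paper defines $\eta_R$ directly as the additive pointwise extension of $T$ to $\add(\ind(R))=\mod(R)$ and then checks that it preserves representables and projective presentations, which sidesteps the well-definedness-on-presentations issue you flag; both constructions yield the same equivalence.
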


\begin{proof}
$(1)$ It is well known that $\eta_{R}:\mod(\ind(R))\ra\CF(R)$ is the functor of additive closure, but we sketch the construction for convenience. Assume that $T\in\mod(\ind(R))$ and define $\eta_{R}(T):\mod(R)\ra\mod(K)$ as the unique contravariant $K$-linear additive functor such that $\eta_{R}(T)(X)=T(X)$, for any $X\in\ind(R)$. In particular, we have $$\eta_{R}(T)(X_{1}\oplus\dots\oplus X_{n})=T(X_{1})\oplus\dots\oplus T(X_{n}),$$ for any $n\in\NN$ and $X_{1},\dots,X_{n}\in\ind(R)$. The functor $\eta_{R}$ is defined on homomorphisms $f\in\mod(\ind(R))$ in a natural way and thus we obtain a functor $\eta_{R}:\mod(\ind(R))\ra\CG(R)$ which is easily seen to be exact. We show that $\Im(\eta_{R})\subseteq\CF(R)$. Indeed, observe that $\eta_{R}$ preserves projective objects, because $$\eta_{R}(\ind(R)(-,M))={}_{R}(-,M),$$ for any $M\in\mod(R)$. Since $\eta_{R}$ is exact, we conclude that it preserves projective presentations and so we get an exact functor $\eta_{R}:\mod(\ind(R))\ra\CF(R)$. This functor is an equivalence, because it possesses the quasi-inverse which sends a functor $T\in\CF(R)$ to the restriction $T|_{\ind(R)}$.


$(2)$ We conclude from Theorem \ref{t10} (1) that $A$ is representation-finite and so there exists the equivalence $\eta_{A}:\mod(\ind(A))\ra\CF(A)$. Assume that $T\in\mod(\ind(R))$. Then the minimal projective presentation $p$ of $T$ in $\mod(\ind(R))$ has the form $$p:\quad\ind(R)(-,M)\ra\ind(R)(-,N)\ra T\ra 0,$$ for some $M,N\in\mod(R)$. Since $(F_{\lambda})_{\lambda}$ is exact and preserves projectivity (as any push-down functor), we get that $$(F_{\lambda})_{\lambda}(p):\quad\ind(A)(-,F_{\lambda}(M))\ra\ind(A)(-,F_{\lambda}(N))\ra (F_{\lambda})_{\lambda}(T)\ra 0$$ is a projective presentation of the $\ind(A)$-module $(F_{\lambda})_{\lambda}(T)$ in $\mod(\ind(A))$. Applying the functors $\eta_{R}$ and $\eta_{A}$ to $p$ and $(F_{\lambda})_{\lambda}(p)$, respectively, we obtain the following two projective presentations: $$\eta_{R}(p):\quad{}_{R}(-,M)\ra{}_{R}(-,N)\ra\eta_{R}(T)\ra 0,$$ $$\eta_{A}((F_{\lambda})_{\lambda}(p)):\quad{}_{A}(-,F_{\lambda}(M))\ra{}_{A}(-,F_{\lambda}(N))\ra\eta_{A}((F_{\lambda})_{\lambda}(T))\ra 0$$ which immediately yields $$\Phi_{F}(\eta_{R}(T))=\eta_{A}((F_{\lambda})_{\lambda}(T)).$$ Similar arguments imply the commutativity of the diagram on morphisms, because any homomorphism $\iota:T_{1}\ra T_{2}$ in $\mod(\ind(R))$ can be lifted to a morphism of the associated minimal projective presentations.
\end{proof}

The above fact is most useful in the case when $R$ is simply connected. Our aim is to present a version of Proposition \ref{p5} in that special situation.

Recall that a locally bounded $K$-category $R$ is \textit{simply connected} \cite{AsSk3} provided that, for any presentation $R\cong\underline{KQ}\slash I$ as a bound quiver $K$-category: 
\begin{itemize}
  \item the quiver $Q$ is \textit{triangular}, i.e. $Q$ has no oriented cycles,
  \item the fundamental group $\Pi_{1}(Q,I)$ is trivial, i.e. $\Pi_{1}(Q,I)=\{1\}$.
\end{itemize} The reader is referred to \cite{Ga} for the appropriate definitions. It follows from \cite{SkBC} that $R$ is simply connected if and only if $R$ is triangular and has no proper Galois coverings.\footnote{$R$ is \emph{triangular} if and only if there is a presentation $R\cong\underline{KQ}\slash I$ with $Q$ triangular.} If $R$ is locally representation finite, then $R$ is simply connected if and only if the fundamental group $\Pi_{1}(\Gamma_{R})$ of $\Gamma_{R}$ is trivial.

For a locally bounded $K$-category $R$, we denote by $K(\Gamma_{R})$ the \textit{mesh-category} \cite{Rie} of the Auslander-Reiten quiver $\Gamma_{R}$ of $R$. Recall that $K(\Gamma_{R})=\underline{K(\Gamma_{R}^{\op})}\slash I$ where $I$ is the admissible ideal in the path category $\underline{K(\Gamma_{R}^{\op})}$ generated by all the mesh-relations appearing in $\Gamma_{R}^{\op}$. We say that $R$ is \textit{standard} if and only if there exists a $K$-linear equivalence of categories $\phi_{R}:\ind(R)\ra K(\Gamma_{R})$ such that $\phi_{R}(X)=[X]$, for any $X\in\ind(R)$. It follows from \cites{BrGa,BoGa} that $R$ is standard if and only if $R$ admits a simply connected Galois covering. In particular, if $R$ is simply connected, then $R$ is standard.

\begin{rem}\label{p6} If $R$ is locally representation-finite, then $\ind(R)$ is locally bounded by Proposition \ref{p4} and hence $\ind(R)$ has a presentation $\underline{KQ}\slash I$ as a bound quiver $K$-category. It is known that $Q=\Gamma_{R}^{\op}$ and $I$ contains all the mesh-relations. However $I$ may contain more relations, so in general $\ind(R)$ is not equivalent with the mesh category $K(\Gamma_{R})$. This holds if $R$ is standard and in the particular case when $R$ is simply connected.
\end{rem}


We get the following handy description of the functor $\Phi:\CF(R)\ra\CF(A)$ in the special case when $R$ is locally representation-finite and simply connected.



  

\begin{thm}\label{t11} Assume that $R$ is a locally representation-finite simply connected locally bounded $K$-category. Assume $G$ is an admissible torsion-free group of $K$-linear automorphisms of $R$ and $F:R\ra A$ is the associated Galois covering. The following assertions are satisfied.
\begin{enumerate}[\rm(1)]
  \item There exists a Galois covering $F_{\lambda}^{\Gamma}:K(\Gamma_{R})\ra K(\Gamma_{A})$ such that the following diagram $$\xymatrix{\ind(R)\ar[rr]^{F_{\lambda}}\ar[d]^{\phi_{R}}&& \ind(A)\ar[d]^{\phi_{A}}\\K(\Gamma_{R})\ar[rr]^{F_{\lambda}^{\Gamma}}&&{}K(\Gamma_{A})}$$ commutes.
  \item The following diagram $$\xymatrix{\mod(K(\Gamma_{R}))\ar[rr]^{(F_{\lambda}^{\Gamma})_{\lambda}}\ar[d]^{\widetilde{\phi}_{R}}&&\mod(K(\Gamma_{A}))\ar[d]^{\widetilde{\phi}_{A}}\\\mod(\ind(R))\ar[rr]^{(F_{\lambda})_\lambda}\ar[d]^{\eta_R}&&\mod(\ind(A))\ar[d]^{\eta_{A}}\\\CF(R)\ar[rr]^{\Phi_{F}}&& \CF(A),}$$ where $\widetilde{\phi}_{R}=(-)\circ\phi_{R}$ and $\widetilde{\phi}_{A}=(-)\circ\phi_{A}$, is a commutative diagram whose columns are equivalences.
\end{enumerate}

\end{thm}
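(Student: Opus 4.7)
My plan is to build the top diagram first, using the fact that both $R$ and $A$ are standard, and then reduce (2) to an application of the functor $(-)_{\lambda}$ to the commutative square from (1), combined with Proposition~\ref{p5}. For (1), I first note that because $R$ is locally representation-finite, Proposition~\ref{p4} says $\ind(R)$ is locally bounded, and Theorem~\ref{t10} gives that $\ind(A)$ is likewise locally bounded and that $F_{\lambda}\colon \ind(R)\ra\ind(A)$ is a Galois $G$-covering of locally bounded $K$-categories inducing a Galois $G$-covering $\Gamma_{R}\ra\Gamma_{A}$ of translation quivers. Since $R$ is simply connected, it is standard, so $\phi_{R}\colon\ind(R)\ra K(\Gamma_{R})$ is a $K$-linear equivalence; moreover $R$ itself is a simply connected Galois covering of $A\cong R\slash G$, so by \cite{BrGa,BoGa} the category $A$ is standard as well, yielding the equivalence $\phi_{A}\colon\ind(A)\ra K(\Gamma_{A})$.

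I then define $F_{\lambda}^{\Gamma}\colon K(\Gamma_{R})\ra K(\Gamma_{A})$ by transport of structure, namely as (a chosen $K$-linear representative of) $\phi_{A}\circ F_{\lambda}\circ\phi_{R}^{-1}$. On objects this gives $F_{\lambda}^{\Gamma}([X])=[F_{\lambda}(X)]$, matching the induced map $\Gamma_{R}\ra\Gamma_{A}$ from Theorem~\ref{t10}~(2). Commutativity of the square in (1) is immediate from the definition, up to natural isomorphism. To see that $F_{\lambda}^{\Gamma}$ is itself a Galois $G$-covering, I transport the $G$-action from $\ind(R)$ to $K(\Gamma_{R})$ via $\phi_{R}$; since $\phi_{R}$ and $\phi_{A}$ are $K$-linear equivalences, the defining hom-space decompositions of a Galois covering for $F_{\lambda}\colon\ind(R)\ra\ind(A)$ (Theorem~\ref{t10}~(2) together with the hom-space isomorphisms of Theorem~\ref{0t1}~(4) restricted to indecomposables) transfer verbatim to $F_{\lambda}^{\Gamma}$, as do surjectivity on objects and the orbit property.

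For (2), the bottom square is exactly the content of Proposition~\ref{p5}~(2), and $\eta_{R},\eta_{A}$ are equivalences by Proposition~\ref{p5}~(1). The functors $\wt{\phi}_{R}=(-)\circ\phi_{R}$ and $\wt{\phi}_{A}=(-)\circ\phi_{A}$ are the pull-up functors along the equivalences $\phi_{R},\phi_{A}$, hence they are themselves equivalences (with quasi-inverses $(-)\circ\phi_{R}^{-1}$ and $(-)\circ\phi_{A}^{-1}$). For the top square, from (1) we have the natural isomorphism $\phi_{A}\circ F_{\lambda}\cong F_{\lambda}^{\Gamma}\circ\phi_{R}$ between $K$-linear functors $\ind(R)\ra K(\Gamma_{A})$. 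Applying the associated push-down operation on the functor categories, and using the standard fact that for composable $K$-linear functors $U,V$ between locally bounded $K$-categories one has $(U\circ V)_{\lambda}\cong U_{\lambda}\circ V_{\lambda}$, together with the obvious identities $(\phi_{R})_{\lambda}\cong\wt{\phi}_{R}^{\,-1}$ and $(\phi_{A})_{\lambda}\cong\wt{\phi}_{A}^{\,-1}$ valid when $\phi_{R},\phi_{A}$ are equivalences, I obtain the desired natural isomorphism $(F_{\lambda})_{\lambda}\circ\wt{\phi}_{R}\cong\wt{\phi}_{A}\circ(F_{\lambda}^{\Gamma})_{\lambda}$.

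The main obstacle I expect is the verification that $F_{\lambda}^{\Gamma}$ really is a Galois $G$-covering of mesh-categories and not merely a $K$-linear functor between them; this requires being careful that the $G$-action transported to $K(\Gamma_{R})$ via $\phi_{R}$ is compatible with the mesh-relations and that the hom-decompositions of Theorem~\ref{0t1}~(4), originally stated for $\mod(R)$, restrict properly to indecomposables and survive transport through $\phi_{R},\phi_{A}$. Everything else is a formal diagram chase, since once the top square of (1) is established at the categorical level, the corresponding square on functor categories in (2) follows by functoriality of $(-)_{\lambda}$ applied to a diagram of equivalences.
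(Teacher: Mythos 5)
Your proposal is correct and follows essentially the same route as the paper: standardness of $R$ and $A$ yields the equivalences $\phi_R,\phi_A$, Theorem \ref{t10} supplies the Galois covering on the level of $\ind(R)\ra\ind(A)$ and of the Auslander--Reiten quivers, the bottom square of (2) is Proposition \ref{p5}, and the top square follows formally from (1). The only (harmless) deviations are that you define $F_{\lambda}^{\Gamma}$ by conjugation $\phi_A\circ F_\lambda\circ\phi_R^{-1}$ rather than via the induced covering of translation quivers as in the paper, and that you justify the top square of (2) by uniqueness of left adjoints and $(U\circ V)_\lambda\cong U_\lambda\circ V_\lambda$, which is a cleaner substitute for the computation the paper relegates to a footnote.
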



\begin{proof} (1) First observe that both $R$ and $A$ are standard, because $R$ is simply connected. Hence there are $K$-linear equivalences $\phi_{R}:\ind(R)\ra K(\Gamma_{R})$ and $\phi_{A}:\ind(R)\ra K(\Gamma_{A})$, sending indecomposable modules to the associated vertices in the Auslander-Reiten quivers. Recall from Theorem \ref{t10} that the push-down functor $F_{\lambda}:\ind(R)\ra\ind(A)$ induces a Galois covering $\Gamma_{R}\ra\Gamma_{A}$ of translation quivers such that $[X]\mapsto[F_{\lambda}(X)]$, for any $X\in\ind(R)$. Hence we get a Galois covering $\Gamma_{R}^{\op}\ra\Gamma_{A}^{\op}$ of the opposite quivers and thus a Galois covering $K(\Gamma_{R})\ra K(\Gamma_{A})$ of the associated mesh-categories. Let us denote this Galois covering as $F_{\lambda}^{\Gamma}:K(\Gamma_{R})\ra K(\Gamma_{A})$. It follows from the definition that $F_{\lambda}^{\Gamma}([X])=[F_{\lambda}(X)]$, for any $X\in\ind(R)$, which is equivalent with $F_{\lambda}^{\Gamma}\circ\phi_R=\phi_{A}\circ F_{\lambda}$. This shows the commutativity.

(2) Since $\phi_R$ and $\phi_A$ are equivalences of $K$-categories, we conclude that $\widetilde{\phi}_{R}=(-)\circ\phi_{R}$ and $\widetilde{\phi}_{A}=(-)\circ\phi_{A}$ are $K$-linear equivalences on the module level. Then it follows from (1) that the following diagram $$\xymatrix{\mod(K(\Gamma_{R}))\ar[rr]^{(F_{\lambda}^{\Gamma})_{\lambda}}\ar[d]^{\widetilde{\phi}_{R}}&&\mod(K(\Gamma_{A}))\ar[d]^{\widetilde{\phi}_{A}}\\\mod(\ind(R))\ar[rr]^{(F_{\lambda})_{\lambda}}&&\mod(\ind(A))}$$ is commutative.\footnote{It is clear that an equivalence of Galois coverings induces an equivalence of the associated push-down functors. Thus we omit a bit tedious calculations showing that $F_{\lambda}^{\Gamma}\circ\phi_R=\phi_{A}\circ F_{\lambda}$ implies that $(F_\lambda)_{\lambda}\circ\widetilde{\phi}_{R}=\widetilde{\phi}_{A}\circ(F_{\lambda}^{\Gamma})_{\lambda}$.} This fact, together with Proposition \ref{p5} (2), implies that the diagram of (2) is a commutative diagram whose columns are equivalences.
\end{proof}

\subsection{Examples}

Theorem \ref{t11} shows that in some nice situations the functor $\Phi_{F}:\CF(R)\ra\CF(A)$ can be described in terms of classical covering theory. We use this fact below in Example \ref{e1} to show that $\Phi_{F}$ is not dense in general.\footnote{We are grateful to Stanisław Kasjan for drawing our attention to this example.} In this way we confirm our conjecture formulated in \cite[Remark 5.6]{P4}. Afterwards we give two other examples in which $\Phi_{F}$ is a dense functor. In these examples we only sketch the arguments.


\begin{exa}\label{e1} Assume that $R=\underline{KQ_{R}}\slash I_{R}$ where $Q_{R}$ is the following quiver: $$\xymatrix@C=1em@R=1em{&&&1\ar[dr]^{\beta}\ar[dl]_{\alpha}&&&&1\ar[dr]^{\beta}\ar[dl]_{\alpha}\\\dots\ar[dr]^{\gamma}&&2\ar[dl]_{\delta}&&3\ar[dr]^{\gamma}&&2\ar[dl]_{\delta}&&3\ar[dr]^{\gamma}&&\dots\ar[dl]_{\delta}\\&4&&&&4&&&&4}$$ and $I_{R}$ is generated by all zero relations of the form $\alpha\delta=0$ and $\beta\gamma=0$. The group $G=\mathbb{Z}$ acts on $R$ by the horizontal translation, indicated on the picture by labelling different elements of the same orbit by the same number or letter. Hence $R\slash G$ is the bound-quiver $K$-algebra $A=KQ_{A}\slash I_{A}$ where $Q_{A}$ is the quiver: $$\xymatrix@C=1em@R=1em{&1\ar[dl]_{\alpha}\ar[dr]^{\beta}\\2\ar[dr]_{\delta}&&3\ar[dl]^{\gamma}\\&4}$$ and $I_{A}=\langle\alpha\delta,\beta\gamma\rangle$. We obtain a Galois covering functor $F:R\ra A$ with the covering group $G=\mathbb{Z}$. Observe that $R$ is locally representation-finite simply connected, as $Q_R$ is a quiver of type $\mathbb{A}^{\infty}_{\infty}$, and thus $A$ is standard (however, not simply connected). Below we present the Auslander-Reiten quivers $\Gamma_{R},\Gamma_{A}$ (one next to the other) and explain some additional data that we marked on these quivers.

$$\xymatrix@C=1em@R=1em{&&&&\vdots\\&\cdot\ar[dr]\ar@{--}[rr]&&\bullet\ar[ur]^{b}\\\cdot\ar[ur]\ar[dr]\ar@{--}[rr]&&\bullet\ar[dr]_{c}\ar[ur]^{a}\\&\cdot\ar[ur]\ar@{--}[rr]&&\bullet\ar[dr]_{d}\ar@{--}[rr]&&\cdot\ar[dr]\\&&&&\bullet\ar[ur]\ar[dr]\ar@{--}[rr]&&\cdot\\&\cdot\ar[dr]\ar@{--}[rr]&&\bullet\ar[ur]^{b}\ar@{--}[rr]&&\cdot\ar[ur]\\\cdot\ar[ur]\ar[dr]\ar@{--}[rr]&&\bullet\ar[dr]_{c}\ar[ur]^{a}\\&\cdot\ar[ur]\ar@{--}[rr]&&\bullet\ar[dr]_{d}\ar@{--}[rr]&&\cdot\ar[dr]\\&&&&\bullet\ar[ur]\ar[dr]\ar@{--}[rr]&&\cdot\\&\cdot\ar[dr]\ar@{--}[rr]&&\bullet\ar[ur]^{b}\ar@{--}[rr]&&\cdot\ar[ur]\\\cdot\ar[ur]\ar[dr]\ar@{--}[rr]&&\bullet\ar[dr]_{c}\ar[ur]^{a}\\&\cdot\ar[ur]\ar@{--}[rr]&&\bullet\ar[dr]_{d}\\&&&&\vdots} \xymatrix@C=1em@R=1em{\\\\\\\\\\\\\\&\cdot\ar[dr]\ar@{--}[rr]&&\bullet\ar[dr]^{b}\ar@{--}[rr]&&\cdot\ar[dr]\\\cdot\ar[ur]\ar[dr]\ar@{--}[rr]&&\bullet\ar[ur]^{a}\ar[dr]_{c}&&\bullet\ar[ur]\ar@{--}[rr]\ar[dr]&&\cdot\\&\cdot\ar[ur]\ar@{--}[rr]&&\bullet\ar[ur]_{d}\ar@{--}[rr]&&\cdot\ar[ur]}$$



As usual, the dotted lines on the above quivers represent the existence of the Auslander-Reiten sequences. We see that $\Gamma_{R}$ contains a convex line $L$ whose vertices are marked as $\bullet$ and arrows are labelled by the letters $a,b,c,d$. We denote by $D$ the convex subquiver of $\Gamma_{A}$ of the type $\widetilde{\mathbb{A}}_{3}$ whose vertices and arrows are marked on $\Gamma_{A}$ in a similar way.

We view $\Gamma_{R}$ and $\Gamma_{A}$ as bound quiver $K$-categories, bounded by all mesh-relations. More specifically, we identify $\Gamma_{R}$ with $K(\Gamma_{R}^{\op})=K(\Gamma_{R})^{\op}$ and $\Gamma_{A}$ with $K(\Gamma_{A}^{\op})=K(\Gamma_{A})^{\op}$. Observe that the group $G=\mathbb{Z}$ acts on $\Gamma_{R}$ by vertical translation, that is, all meshes lying on each of two vertical lines (on both sides of the line $L$) belong to the same orbit. Thus indeed $\Gamma_{R}\slash G\cong\Gamma_{A}$. Denote the Galois covering $\Gamma_{R}\ra\Gamma_{A}$ by $\widetilde{F}$ and observe that $\widetilde{F}$ is equivalent with $(F_{\lambda}^{\Gamma})^{\op}$. Furthermore, denote by $M(L)$ the linear $\Gamma_{R}$-module associated with the line $L$ and by $M(D)$ the $\Gamma_{A}$-module having $K$ in the vertices and identities on the arrows such that $\supp(M(D))=D$. Then $\widetilde{F}_{\bullet}(M(D))=M(L)$ and thus $\widetilde{F}_{\bullet}(M(D))$ is an indecomposable module of infinite dimension. This implies that the module $M(D)$ is of the second kind, because otherwise $\widetilde{F}_{\bullet}(M(D))$ would be of the form $\bigoplus_{g\in G}{}^{g}L$, for some indecomposable $L\in\mod(R)$. Consequently, the functor $\widetilde{F}_{\lambda}$ is not dense. Since $\widetilde{F}^{\op}\cong F_{\lambda}^{\Gamma}$, Theorem \ref{t11} yields $$(\widetilde{F}_{\lambda})^{\op}\cong(\widetilde{F}^{\op})_\lambda\cong (F_{\lambda}^{\Gamma})_{\lambda}\cong\Phi_F$$ and thus the functor $\Phi_{F}:\CF(R)\ra\CF(A)$ is not dense either.


These arguments are sufficient to show that $\Phi_{F}:\CF(R)\ra\CF(A)$ is not a dense functor. However, we would like to give more detailed description of a functor $U\in\CF(A)$ which is of the second kind. For this purpose we need more calculations. First observe that $\Gamma_{A}$ has the following form $$\xymatrix@C=1em@R=1em{&P_2\ar[dr]\ar@{--}[rr]&&S_2\ar[dr]^{\mu_{2}}\ar@{--}[rr]&& I_3\ar[dr]\\ P_4\ar[ur]\ar[dr]\ar@{--}[rr]&& I_4\ar[ur]^{\mu_{1}}\ar[dr]_{\nu_{1}}&& P_1\ar[ur]\ar@{--}[rr]\ar[dr]&& I_1\\& P_3\ar[ur]\ar@{--}[rr]&& S_3\ar[ur]_{\nu_{2}}\ar@{--}[rr]&& I_2\ar[ur]}$$ where $P_j,I_j,S_j$ denote the indecomposable projective, injective and simple $A$-modules, respectively, associated with the vertex $j\in\{1,2,3,4\}$ and $\mu_{1},\mu_{2},\nu_{1},\nu_{2}$ are the appropriate irreducible homomorphisms. Let us depict $\Gamma_{A}^{\op}$ in the following way $$\xymatrix@C=1em@R=1em{&x_{2}\ar[dl]\ar@{--}[rr]&& x_{5}\ar[dl]_{m_{1}}\ar@{--}[rr]&& x_{8}\ar[dl]\\ x_{1}\ar@{--}[rr]&& x_{4}\ar[ul]\ar[dl]&& x_{7}\ar[ul]_{m_{2}}\ar@{--}[rr]\ar[dl]^{n_{2}}&&x_{10}\ar[dl]\ar[ul]\\& x_{3}\ar[ul]\ar@{--}[rr]&&x_{6}\ar[ul]^{n_{1}}\ar@{--}[rr]&& x_{9}\ar[ul]}$$ and set $B=K(\Gamma_{A})$. Let $P_{x_{4}}=B(-,x_{4})$ and $P_{x_{7}}=B(-,x_{7})$ be the indecomposable projective $B$-modules. Then the element $s:=m_{2}m_{1}+n_{2}n_{1}\in B(x_{4},x_{7})$ induces a homomorphism $(-,s):P_{x_{4}}\ra P_{x_{7}}$ of $B$-modules which cokernel is the $B$-module $N$ having $K$ in the vertices and identities on the arrows such that $\supp(N)=\{x_{4},x_{5},x_{6},x_{7}\}$. Obviously $N$ is the dual of the module $M(D)$ and thus it is of the second kind. Therefore we have a projective presentation $$p:\quad P_{x_{4}}\xrightarrow{(-,s)} P_{x_{7}}\ra N\ra 0$$ in $\mod(B)$ and it follows easily that $$(\eta_{A}\circ\wt{\phi}_{A})(p):\quad{}_{A}(-,I(4))\xrightarrow{{}_{A}(-,f)}{}_{A}(-,P(1))\ra(\eta_{A}\circ\wt{\phi}_{A})(N)\ra 0$$ where $f=\mu_{2}\mu_{1}+\nu_{2}\nu_{1}$ and $\eta_{A}\circ\wt{\phi}_{A}:\mod(K(\Gamma_{A}))\ra\CF(A)$ is the equivalence of Theorem \ref{t11}. Hence $$U=(\eta_{A}\circ\wt{\phi}_{A})(N)\in\CF(A)$$ is a functor of the second kind. In fact, similar arguments imply that, for any non-zero $\lambda\in K$, the functor $U_{\lambda}=\Coker{}_{A}(-,f_{\lambda})$, where $f_{\lambda}=\mu_{2}\mu_{1}+\lambda\nu_{2}\nu_{1}$, is of the second kind. These functors are pairwise non-isomorphic and so we obtain a one parameter family of functors of the second kind in the functor category $\CF(A)$. \epv


\end{exa}

\begin{exa}\label{e2} Assume that $R=\underline{KQ_{R}}\slash I_{R}$ where $Q_{R}$ is the following quiver: $$\xymatrix@C=0.8em@R=1em{&&&&1\ar[dr]^{\alpha}\ar[dddlll]_{\delta}&&&&&&1\ar[dr]^{\alpha}\ar[dddlll]_{\delta}&&&&&&1\ar[dr]^{\alpha}\ar[dddlll]_{\delta}&\\&&&&& 2\ar[dr]^{\beta}&&&&&& 2\ar[dr]^{\beta} &&&&&& 2\ar[dr]^{\beta}&&&\\&&&&&& 3\ar[dr]^{\gamma} &&&&&& 3\ar[dr]^{\gamma} &&&&&& 3\ar[dr]^{\gamma}&&\\\cdots &4 &&&&&& 4 &&&&&& 4 &&&&&& 4& \cdots}$$ and $I_{R}$ is generated by all zero relations of the form $\alpha\beta=\beta\gamma=0$. The group $G=\mathbb{Z}$ acts on $R$ by the horizontal translation and hence $R\slash G$ is the bound-quiver $K$-algebra $A=KQ_{A}\slash I_{A}$ where $Q_{A}$ is the quiver: $$\xymatrix@C=1em@R=1em{1\ar[dd]_{\delta}\ar[rr]^{\alpha}&& 2\ar[dd]^{\beta}\\&&\\3&&4\ar[ll]^{\gamma}}$$ and $I_{A}=\langle\alpha\beta,\beta\gamma\rangle$. We obtain a Galois covering functor $F:R\ra A$ with the covering group $G=\mathbb{Z}$ where $R$ is locally representation-finite simply connected and thus $A$ is standard. The Auslander-Reiten quivers $\Gamma_{R}$ and $\Gamma_{A}\cong\Gamma_{R}\slash G$ have the following shapes, respectively: $$\xymatrix@C=1em@R=0.95em{\\ \\ \\ \\  \dots\\} \xymatrix@C=1em@R=1em{&\cdot\ar[dr]\ar@{--}[rr]&&\cdot\ar[dr] &&&& \cdot\ar[dr]\ar@{--}[rr]&&\cdot\ar[dr]\\
 \cdot\ar[ur]\ar[dr]\ar@{--}[rr]&&\cdot\ar[ur]\ar[dr]\ar@{--}[rr]&&\cdot &&  \cdot\ar[ur]\ar[dr]\ar@{--}[rr]&&\cdot\ar[ur]\ar[dr]\ar@{--}[rr]&&\cdot\\&\cdot\ar[ur]\ar[dr]\ar@{--}[rr]&&\cdot\ar[ur]\ar[dr] &&&& \cdot\ar[ur]\ar[dr]\ar@{--}[rr]&&\cdot\ar[ur]\ar[dr]& && & \\
 \cdot\ar[ur]\ar@{--}[rr]&&\cdot\ar[ur]\ar@{--}[rr]&&\cdot\ar[dr]_{\mu}\ar@{--}[rr]&&\cdot\ar[ur]\ar@{--}[rr]&&\cdot\ar[ur]\ar@{--}[rr]&&\cdot\ar[dr]_{\mu}\ar@{--}[rr]&&\cdot \dots\\ &&&&&\cdot\ar[ur]_{\nu}&& &&&& \cdot \ar[ur]_{\nu}&}\xymatrix@C=1em@R=1em{&\cdot\ar[dr]\ar@{--}[rr]&&\cdot\ar[dr]\\\cdot\ar[ur]\ar[dr]\ar@{--}[rr]&&\cdot\ar[ur]\ar[dr]\ar@{--}[rr]&&\cdot\\&\cdot\ar[ur]\ar[dr]\ar@{--}[rr]&&\cdot\ar[ur]\ar[dr]\\\cdot\ar[ur]\ar@{--}[rr]&&\cdot\ar[ur]\ar@{--}[rr]&&\cdot\ar@/^/[dll]^{\tiny{\mu}}\\& & \cdot\ar@/^/[ull]^{\tiny{\nu}} & &}$$ Set $B=K(\Gamma_{R}^{\op})$ and observe that $B$ is locally support-finite. Indeed, we always have $\nu\mu=0$ so the support of any indecomposable finite dimensional $B$-module is contained in a full subcategory $C$ of $B$ of the form: $$\xymatrix@C=1em@R=1em{&&\cdot\ar[dr]\ar@{--}[rr]&&\cdot\ar[dr]\\&\cdot\ar[ur]\ar[dr]\ar@{--}[rr]&&\cdot\ar[ur]\ar[dr]\ar@{--}[rr]&&\cdot\\&&\cdot\ar[ur]\ar[dr]\ar@{--}[rr]&&\cdot\ar[ur]\ar[dr]\\&\cdot\ar[ur]\ar@{--}[rr]&&\cdot\ar[ur]\ar@{--}[rr]&&\cdot\ar[dr]\\ \cdot\ar[ur]&&&&&&\cdot}$$ Denoting the Galois covering $\Gamma_{R}\ra\Gamma_{A}$ by $\widetilde{F}$, we conclude from \cite{DoSk} or \cite{DoLeSk} that the functor $\widetilde{F}_{\lambda}$ is dense. As in the previous example, we conclude that the functor $\Phi_{F}:\CF(R)\ra\CF(A)$ is also dense in this case. \epv
\end{exa}

In the above example the $K$-category $B=K(\Gamma_{R}^{\op})$ is not only locally support-finite, but also locally representation-finite. This follows directly from the results of \cite{IT,IPTZ} where the authors give sufficient criteria for Auslander algebras to be representation finite.\footnote{One can also prove directly that the subcategory $C$ of $B$ (we use the same notation as above) is representation-finite.} In the last example we show a Galois covering functor $F:R\ra A$ such that $\Phi_{F}:\CF(R)\ra\CF(A)$ is dense but the category $K(\Gamma_{R}^{\op})$ is not locally representation-finite.

\begin{exa}\label{e3} Assume that $R=\underline{KQ_{R}}\slash I_{R}$ where $Q_{R}$ is the following quiver: $$\xymatrix@C=1em@R=1em{&&&1\ar[dr]^{\alpha}\ar[ddll]_{\gamma}&&&&1\ar[dr]^{\alpha}\ar[ddll]_{\gamma}&&&&\dots\ar[ddll]_{\gamma}\\\dots\ar[dr]^{\beta}&&&&2\ar[dr]^{\beta}&&&&2\ar[dr]^{\beta}&&\\&3&&&&3&&&&3}$$ and $I_{R}$ is generated by all zero relations of the form $\alpha\beta=0$. The group $G=\mathbb{Z}$ acts on $R$ by horizontal translation and hence $R\slash G$ is the bound-quiver $K$-algebra $A=KQ_{A}\slash I_{A}$ where $Q_{A}$ is the quiver $$\xymatrix@C=1em@R=1em{1\ar[dr]^{\alpha}\ar[dd]_{\gamma}\\&2\ar[dl]^{\beta}\\3}$$ and $I_{A}=\langle\alpha\beta\rangle$. We obtain a Galois covering functor $F:R\ra A$ with the covering group $G=\mathbb{Z}$ where $R$ is locally representation-finite simply connected and thus $A$ is standard. The Auslander-Reiten quiver $\Gamma_{R}$ has the following shape:
$$\cdots \vcenter{\xymatrix@C=1.1em@R=1em{&\cdot\ar[dr]\ar@{--}[rr]&&\cdot\ar[dr] &&&&&& \cdot\ar[dr]\ar@{--}[rr]&&\cdot\ar[dr]\\
 \cdot\ar[ur]\ar[dr]\ar@{--}[rr]&&\cdot\ar[ur]\ar[dr]\ar@{--}[rr]&&\cdot &&&&  \cdot\ar[ur]\ar[dr]\ar@{--}[rr]&&\cdot\ar[ur]\ar[dr]\ar@{--}[rr]&&\cdot\\&\cdot\ar[ur]\ar[dr]\ar@{--}[rr]&&\cdot\ar[ur]\ar[dr] &&&&&& \cdot\ar[ur]\ar[dr]\ar@{--}[rr]&&\cdot\ar[ur]\ar[dr] \\
 \cdot\ar[ur]\ar@{--}[rr]&&\cdot\ar[ur]\ar@{--}[rr]&&\cdot\ar[dr]\ar@{--}[rr]&&\cdot\ar[dr]\ar@{--}[rr]&&\cdot\ar[ur]\ar@{--}[rr]&&\cdot\ar[ur]\ar@{--}[rr]&&\cdot\ar[dr]\ar@{--}[rr]&&\cdot\ar[dr]\ar@{--}[rr]&&\cdot\\
 &&&&&\cdot\ar[ur]\ar[dr]\ar@{--}[rr]&&\cdot\ar[ur]\ar[dr] &&&&&& \cdot\ar[ur]\ar[dr]\ar@{--}[rr]&&\cdot\ar[ur]\ar[dr] \\
 &&&&\cdot\ar[ur]\ar[dr]\ar@{--}[rr]&&\cdot\ar[ur]\ar[dr]\ar@{--}[rr]&&\cdot &&&& \cdot\ar[ur]\ar[dr]\ar@{--}[rr]&&\cdot\ar[ur]\ar[dr]\ar@{--}[rr]&&\cdot \\
 &&&&&\cdot \ar[ur]\ar@{--}[rr]&& \cdot\ar[ur] &&&&&& \cdot \ar[ur]\ar@{--}[rr]&& \cdot\ar[ur]}} \cdots$$ and thus the $K$-category $B=K(\Gamma_{R}^{\op})$ is strongly simply connected \cite{Sk2}. The category $B$ contains a convex subcategory of the form $$\xymatrix@C=1.1em@R=1em{&\cdot\ar[dr]\\\cdot\ar[ur]\ar[dr]\ar@{--}[rr]&&\cdot\\&\cdot\ar[ur]\ar[dr]\\&&\cdot\ar[dr]\\&&&\cdot\ar[dr]\\&&\cdot\ar[ur]\ar[dr]\ar@{--}[rr]&&\cdot\\&&&\cdot\ar[ur]}$$ which is of infinite representation type and so $B$ is not locally representation-finite. However, $B$ does not contain a hypercritical or pg-critical convex subcategory (see \cite{NoSk} for the definitions) and thus the functor $\widetilde{F}_{\lambda}$ is dense by \cite[Corollary 2.5]{Sk3} where $\widetilde{F}:\Gamma_{R}\ra\Gamma_{A}$, as usual. As before we conclude that functor $\Phi_{F}:\CF(R)\ra\CF(A)$ is dense. \epv


\end{exa}

\begin{rem}\label{r7} This section gives examples of Galois coverings $F:R\ra A$ for which the functor $\Phi_{F}:\CF(R)\ra\CF(A)$ is either dense or not. We only deal with Galois coverings of representation-finite algebras by locally representation-finite locally bounded $K$-categories. Moreover, in the final results we assume that $R$ is simply connected. These assumptions allow to view the functor $\Phi_{F}:\CF(R)\ra\CF(A)$ as a suitable push-down functor between module categories over the mesh categories of $\Gamma_{R}$ and $\Gamma_{A}$, see Theorem \ref{t11}. We are convinced that these methods can be generalized, at least to some extent, to the representation-infinite case. Indeed, it seems that one can make constructions analogues to those in Example \ref{e1} in the case of standard connected components of $\Gamma_R$. This can be used to showing examples of functors $\Phi_{F}:\CF(R)\ra\CF(A)$ which are not dense, but is not sufficient for showing the density. It is an interesting open problem to provide a criterion for the density of the functor $\Phi_{F}:\CF(R)\ra\CF(A)$ in analogy with the classical results of \cite{DoSk} and \cite{DoLeSk}. One could start with a generalization of the definition of a locally support-finite locally bounded $K$-category to the realm of categories of finitely presented functors. We believe that the following property is worth studying in this context: \begin{center}\emph{For any module $X\in\ind(R)$, the union of supports of all indecomposable functors $T\in\CF(R)$ such that $T(X)\neq 0$ is contained in the support of a hom-functor.}\footnote{The support of $T\in\CF(R)$ is defined as the class of all modules $X\in\ind(R)$ such that $T(X)\neq 0$.}\end{center} It should be emphasized, however, that this condition is useless without significant generalizations of many fundamental concepts from \cite{DoSk,DoLeSk}.
\end{rem}

\section{Some special applications}
This section is devoted to present some applications of Theorem \ref{t3} in calculating the Krull-Gabriel dimension of algebras or determining their representation types. We concentrate on two prominent classes of algebras: the \emph{tame algebras with strongly simply connected Galois coverings} \cite{Sk3} and the \emph{weighted surface algebras} \cite{ErSk1,ErSk2,ErSk3}. In the case of the first class we apply Theorem \ref{t3} (2) whereas Theorem \ref{t3} (1) in the case of the second. The main results of the section are Theorems \ref{t13} and \ref{t14}. We stress that in these theorems we are not allowed to apply Theorem \ref{t8} (i.e. our previous results from \cite{P4,P6}), because in general we consider Galois $G$-coverings $R\ra A$ for which the group $G$ is not torsion-free nor the push-down functor $F_{\lambda}$ is dense.

\subsection{Algebras with strongly simply connected Galois coverings}

We start with recalling basic definitions and facts related with strongly simply connected algebras. Assume that $A=kQ\slash I$ is a triangular algebra. Then $A$ is \textit{strongly simply connected} \cite{Sk2} if and only if the first Hochschild cohomology group $H^{1}(C,C)$ vanishes, for any convex subcategory $C$ of $A$. In the representation theory of strongly simply connected algebras, \textit{hypercritical} and \textit{pg-critical} algebras play a prominent role. The definitions are recalled below. We refer the reader to \cite[Chapter VI]{AsSiSk} or \cite{HR} for the fundamental concepts of tilting theory and to \cite[Chapters XIX, XX]{SiSk3} for the definitions of representation types and the growth of a tame algebra.

A quiver $Q$ is called \textit{minimal wild} if and only if its underlying graph is one of the following graphs: $T_{5}$, $\wt{\wt{\DD}}_{n}$ or $\wt{\wt{\EE}}_{i}$ for $i=6,7,8$, see for example \cite{Sk1}. The associated hereditary $K$-algebra $KQ$ is a \textit{minimal wild hereditary} algebra. A \textit{hypercritical algebra} is a concealed algebra of type $Q$.

Assume that $H$ is a hereditary algebra of type $\wt{\DD}_{n}$, $T$ is a tilting $H$-module without prinjective direct summands and $B=\End_{H}(T)^{\op}$ is a representation-infinite tilted algebra. We denote by $\CT(T)$ the torsion class induced by $T$. A \textit{pg-critical algebra of type I} is a matrix algebra of the form $$B[M,t]=\left[\begin{matrix}K&0&\cdots&0&0&0&0\\ K&K&\cdots&0&0&0&0\\\vdots&\vdots&\ddots&\vdots&\vdots&\vdots&\vdots\\K&K&\cdots&K&0&0&0\\K&K&\cdots&K&K&0&0\\K&K&\cdots&K&0&K&0\\M&0&\cdots&0&0&0&B\end{matrix} \right]$$ such that $t\geq 1$, $M=\Hom_{H}(T,S)$, where $S$ is an indecomposable regular $H$-module of regular length $1$ lying in a tube of rank $n-2$ such that $S\in\CT(T)$, and any proper convex subcategory of $B[M,t]$ is of polynomial growth. A \textit{pg-critical algebra of type II} is a matrix algebra of the form $$B[N]=\left [\begin{matrix} K & 0\\ N & B\end{matrix}\right]$$ such that $N=\Hom_{H}(T,R)$, where $R$ is an indecomposable regular $H$-module of regular length $2$ lying in a tube of rank $n-2$ such that $R\in\CT(T)$, and any proper convex subcategory of $B[N]$ is of polynomial growth. A \textit{pg-critical algebra} is a pg-critical algebra of type I or II.

\begin{rem}\label{r8}
It is known that hypercritical algebras are strictly wild and are classified by quivers and relations in \cite{Un}. In turn, pg-critical algebras are tame of non-polynomial growth \cite[Proposition 2.4]{Sk1} and are classified by quivers and relations in \cite{NoSk}. It follows by \cite[Theorem 7.1]{KaPa2} that hypercritical algebras (see also \cite[10.3]{Pr} and \cite{P5,GP} for wild algebras in general) and pg-critical algebras both have Krull-Gabriel dimension undefined.\footnote{In fact, Theorem 7.1 of \cite{KaPa2} shows a stronger result that algebras from these classes possess \emph{independent pairs of dense chains of pointed modules} \cite{KaPa1,KaPa2} which further implies that their lattices of pointed modules are wide \cite[Section 3]{KaPa2}. This yields their Krull-Gabriel dimensions are undefined, see \cite[Chapters 7,13]{Pr2}.}
\end{rem}

We need to recall the definition of weakly periodic modules, in particular the linear modules. Assume that $R$ is a locally bounded $K$-category and $G$ is a group of $K$-linear automorphisms of $R$ acting freely on the objects of $R$. Given an $R$-module $M$ we denote by $G_{M}$ the \emph{stabilizer} $\{g\in G\mid {}^{g}M\cong M\}$ of $M$. An indecomposable $R$-module $M$ in $\Mod(R)$ is \textit{weakly $G$-periodic} \cite[2.3]{DoSk} if and only if $\supp (M)$ is infinite and $(\supp M)\slash G_{M}$ is finite. 

Assume that $D$ is a full subcategory of $R$ and $g\in G$. Then $gD$ denotes the full subcategory of $R$ formed by all objects $gx$, $x\in D$. The set $\{g\in G\mid gD=D\}$ is the \textit{stabilizer} $G_{D}$ of $D$. A \textit{line} in $R$ is a convex subcategory of $R$ which is isomorphic to the path category of a linear quiver, i.e. a quiver of type $\AA_{n}$, $\AA_{\infty}$ or ${}_{\infty}\AA_{\infty}$. A line $L$ is \textit{$G$-periodic} if and only if the stabilizer $G_{L}$ is nontrivial. Observe that in this case the quiver of the line $L$ is of the type ${}_{\infty}\AA_{\infty}$.\footnote{It is well known that a $G$-periodic line $L$ in $R$ induces a \textit{band} in $R/G$, and every band induces a 1-parameter family of indecomposable $R/G$- modules, see for example \cite{DoSk}. The length of the band corresponding to $L$ equals the number of the $G_L$-orbits of the vertices of $L$.} 

Assume that $L$ is a $G$-periodic line in $R$ {and let $Q_L$ be the full subquiver of $Q$, whose vertices are the objects of $L$}. Then the \textit{canonical} weakly $G$-periodic $R$-module is the module $M_{L}$ such that $M_{L}(x)=K$ for $x\in Q_{L}$, $M_{L}(x)=0$ for $x\notin Q_{L}$ and $M_{L}(\alpha)=id_{K}$ for any arrow $\alpha$ in $Q_{L}$. Note that $G_{M_{L}}\cong G_{L}\cong\ZZ$. If $M$ is a weakly $G$-periodic $R$-module and $M\cong M_{L}$ for some $G$-periodic line $L$ in $R$, then $M$ is called \textit{linear}.

Assume that $R=\und{kQ}\slash I$ is a triangular locally bounded $k$-category. Then $R$ is \textit{strongly simply connected} if and only if the following two conditions are satisfied: 
\begin{itemize}
  \item $R$ is intervally finite,
  \item every finite convex subcategory of $R$ is strongly simply connected.
\end{itemize}

Tame algebras having strongly simply connected Galois coverings is a wide and important class, containing in particular all special biserial algebras \cite[5.2]{DoSk}. This class is studied in depth by A. Skowro\'nski in \cite{Sk3}. Here we only mention the following main result of this paper which we directly apply below. 

\begin{thm}\label{t12}\textnormal{(\cite[Theorem 2.6]{Sk3})} Assume that $R$ is a strongly simply connected locally bounded $K$-category, $G$ an admissible group of $K$-linear automorphisms of $R$ and let $A=R\slash G$. Then the following assertions hold:
\begin{enumerate}[\rm(1)]
  \item The algebra $A$ is of polynomial growth if and only if the category $R$ does not contain a convex subcategory which is hypercritical or pg-critical, and the number of $G$-orbits of $G$-periodic lines in $R$ is finite.
  \item The algebra $A$ is of domestic type if and only if the category $R$ does not contain a convex subcategory which is hypercritical, pg-critical or tubular, and the number of $G$-orbits of $G$-periodic lines in $R$ is finite.
\end{enumerate}
\end{thm}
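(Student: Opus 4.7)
The plan is to decompose the family of indecomposable $A$-modules by covering theory and control each piece separately against the listed convex subcategory obstructions. By the description of indecomposables in terms of Galois coverings from \cite{DoSk,DoLeSk}, every indecomposable $A$-module is, up to isomorphism, either of the form $F_\lambda(N)$ for some indecomposable $N\in\ind(R)$ (a module of the first kind) or of the form $F_\lambda(M)$ for some weakly $G$-periodic indecomposable $M\in\Mod(R)$, and two indecomposables $R$-modules are identified by $F_\lambda$ precisely along their $G$-orbits. Thus the growth function $\mu_A$ counting one-parameter families of $A$-modules in each dimension splits into a ``first kind'' contribution, governed by finite-dimensional $R$-modules whose supports lie in convex subcategories of $R$, and a ``second kind'' contribution, parametrised by $G$-orbits of weakly $G$-periodic indecomposable $R$-modules.

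For the forward direction of both (1) and (2), the strategy is to show that each forbidden configuration single-handedly spoils the desired growth of $A$. If $R$ contains a convex subcategory $C$ that is hypercritical, then via the extension-by-zeros functor $\CE_C:\mod(C)\to\mod(R)$ and the push-down $F_\lambda$ one embeds the strictly wild category $\mod(C)$ into $\mod(A)$, contradicting tameness, hence also polynomial and domestic growth. If $C$ is pg-critical, the same embedding exhibits a tame nonpolynomial-growth subcategory inside $\mod(A)$. If $C$ is tubular (used only for (2)), the $\mathbb{P}^1(K)$-indexed tubular families in $\mod(C)$ give infinitely many dimension vectors of one-parameter families, hence $A$ cannot be domestic. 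Finally, each $G$-orbit of a $G$-periodic line $L$ produces a band in $A=R/G$ and thus a distinct one-parameter family of indecomposable $A$-modules, so infinitely many such orbits preclude domestic type and, together with the other forbidden subcategories ruled out in (1), also preclude polynomial growth.

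For the converse direction, the plan is to argue that, under the stated exclusions, both contributions to $\mu_A$ are bounded. The first-kind contribution is controlled module by module: any indecomposable in $\ind(R)$ has finite support inside some finite convex subcategory $B\preceq R$, and since $R$ is strongly simply connected, $B$ is itself strongly simply connected; by the Bongartz-style classification of strongly simply connected algebras without hypercritical or pg-critical (respectively additionally tubular) convex subcategories, such $B$ is of polynomial (respectively domestic) growth, and a careful accounting with the free $G$-action yields that the total first-kind growth of $A$ is polynomial (respectively linear). The second-kind contribution is controlled by a classification of weakly $G$-periodic indecomposable $R$-modules: under the hypotheses, every such module is, up to $G$-orbit and up to finitely many exceptions coming from tubular pieces allowed in (1), the canonical linear module $M_L$ (or a band module built from $L$) for some $G$-periodic line $L\subseteq R$, so finitely many $G$-orbits of such lines yield finitely many one-parameter families of $A$-modules.

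The main obstacle is the classification of weakly $G$-periodic indecomposable $R$-modules in the converse direction: one must prove that the absence of hypercritical and pg-critical convex subcategories forces every weakly $G$-periodic module to be essentially ``one-dimensional,'' i.e., produced by a $G$-periodic line, rather than a more exotic weakly periodic module of larger $G$-support width. This step relies on deep structure theory of Auslander--Reiten components of strongly simply connected categories and on the general description of bands over orbit categories from \cite{DoSk}, and it is what ultimately allows one to equate the finiteness of $G$-orbits of $G$-periodic lines with the finiteness of the second-kind contribution to $\mu_A$.
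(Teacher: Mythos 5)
This statement is imported verbatim from \cite[Theorem 2.6]{Sk3}; the paper offers no proof of it, so there is no in-paper argument to compare against. Your sketch does reproduce the correct high-level architecture of Skowro\'nski's result --- the first-kind/second-kind dichotomy of indecomposable $A$-modules coming from \cite{DoSk}, the obstruction role of hypercritical, pg-critical and tubular convex subcategories, and the correspondence between $G$-periodic lines and one-parameter families (bands) in $\mod(A)$ --- but as written it is a plan rather than a proof, and the steps you defer are precisely the hard ones.

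Concretely: (a) the decomposition of $\ind(A)$ you invoke at the outset is the conclusion of \cite[Theorem 3.6]{DoSk} (restated here as Theorem \ref{t15}), which is only available under the hypotheses that $G$ acts freely on the isomorphism classes in $\ind(R)$ and that every weakly $G$-periodic $R$-module is linear; the latter is exactly what must be \emph{established} from the absence of hypercritical and pg-critical convex subcategories, so using the decomposition from the start is circular. (b) That central classification of weakly $G$-periodic modules is the technical core of Skowro\'nski's argument, and you explicitly flag it as an unresolved ``main obstacle'' --- which means the converse directions of both (1) and (2) are not proved. (c) In the forward direction, passing from a hypercritical, pg-critical or tubular convex subcategory $C\preceq R$ to wildness, nonpolynomial growth or non-domesticity of $A$ requires $F_\lambda\circ\CE_{C}$ to be a representation embedding (preserving indecomposability and reflecting isomorphisms); since the theorem does not assume $G$ torsion-free, this needs an argument (e.g., that only finitely many $g\in G$ satisfy $gC\cap C\neq\emptyset$ and that $F_\lambda$ restricted to modules supported in $C$ behaves as required), which you do not supply. (d) The claim that a ``careful accounting with the free $G$-action yields that the total first-kind growth of $A$ is polynomial (respectively linear)'' is asserted, not carried out; bounding the number of $G$-orbits of indecomposables of each dimension $d$ by a polynomial in $d$ from local data on finite convex subcategories is a genuine counting argument, not a formality. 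In short, the proposal identifies the right ingredients but proves none of them; it would not stand as a proof of the theorem.
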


The following theorem applies Theorem \ref{t3} (2).

\begin{thm}\label{t13} Assume that $R$ is a strongly simply connected locally bounded $K$-category, $G$ an admissible group of $K$-linear automorphisms of $R$ and $A=R\slash G$. If $\KG(A)$ is finite, then $A$ is of domestic type.
\end{thm}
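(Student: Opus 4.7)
The plan is to verify, under the hypothesis $\KG(A)<\infty$, both of the conditions characterising domestic representation type given by Theorem \ref{t12}(2): namely, that $R$ contains no convex subcategory which is hypercritical, pg-critical, or tubular, and that the number of $G$-orbits of $G$-periodic lines in $R$ is finite. Once these two conditions are established, Theorem \ref{t12}(2) will immediately yield the conclusion.

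For the first condition I would combine Theorem \ref{t3} with Lemma \ref{0l2}(1). Theorem \ref{t3} gives $\KG(R)\le\KG(A)<\infty$, and then Lemma \ref{0l2}(1) yields $\KG(B)\le\KG(R)<\infty$ for every convex subcategory $B$ of $R$. Hypercritical algebras are strictly wild and pg-critical algebras are tame of nonpolynomial growth, both having $\KG=\infty$ by Remark \ref{r8}; tubular algebras have $\KG=\infty$ by the result of Geigle recalled in the introduction. Consequently no convex subcategory of $R$ can be of any of these three types.

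For the second condition I would argue by contradiction. Assume that the number of $G$-orbits of $G$-periodic lines in $R$ is infinite. Since hypercritical and pg-critical convex subcategories of $R$ have already been ruled out, Theorem \ref{t12}(1) forces $A$ to fail polynomial growth, so $A$ is tame of nonpolynomial growth. Each $G$-orbit of a $G$-periodic line $L$ in $R$ produces, via the push-down, a band in $A$ and hence a one-parameter family of indecomposable $A$-modules; distinct orbits yield families that are essentially independent of one another. Invoking the technology developed in \cite{KaPa1,KaPa2,KaPa3} for algebras with strongly simply connected Galois coverings, these independent families can be used to construct an independent pair of dense chains of pointed $A$-modules, which forces the lattice of all pointed $A$-modules to have infinite width and therefore $\KG(A)=\infty$, contradicting the hypothesis.

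The main obstacle will be this last step. A direct appeal to the result that \emph{strongly simply connected} algebras of nonpolynomial growth have undefined Krull-Gabriel dimension is not available, because $A$ itself need not be strongly simply connected; only its covering $R$ is. What is needed is a faithful transfer of the combinatorial data provided by the infinitely many $G$-orbits of $G$-periodic lines in $R$ to an explicit configuration in the pointed-module lattice over $A$, and this is precisely the content of the Kasjan-Pastuszak technology cited above. Modulo this input, verifying both conditions of Theorem \ref{t12}(2) is routine, and the conclusion that $A$ is of domestic type follows.
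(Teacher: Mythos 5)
Your overall strategy coincides with the paper's: verify both conditions of Theorem \ref{t12}(2) under the hypothesis $\KG(A)<\infty$. Your treatment of the first condition (no hypercritical, pg-critical or tubular convex subcategory of $R$) is exactly the paper's argument: $\KG(R)\le\KG(A)<\infty$ by Theorem \ref{t3}, then $\KG(B)\le\KG(R)$ for convex $B$ by Lemma \ref{0l2}, combined with Remark \ref{r8} and Geigle's result on tubular algebras.

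The gap is in your second condition. You reduce everything to constructing, directly over $A$, an independent pair of dense chains of pointed modules out of the infinitely many band families coming from the $G$-periodic lines, and you yourself flag this as the unfilled ``main obstacle''. That construction is not available off the shelf: the Kasjan--Pastuszak results you invoke produce such configurations for specific classes (e.g.\ pg-critical or strongly simply connected algebras of nonpolynomial growth), not for an arbitrary orbit algebra $A=R/G$ with infinitely many orbits of periodic lines. The paper closes this step differently and more economically: assuming infinitely many $G$-orbits of $G$-periodic lines (and, without loss of generality, no hypercritical or pg-critical convex subcategory of $R$), it follows from \cite[Lemma 6.1 (3), Theorem 6.2]{KaPa2} and their proofs that $A$ admits a \emph{factor algebra} $C$ which is a string algebra of non-domestic type; then $\KG(C)=\infty$ by Schr\"oer's theorem \cite{Sch2}, and $\KG(C)\le\KG(A)$ by Lemma \ref{0l2}(2) (the factor-category case, which is precisely why that case is included in the lemma), giving the contradiction $\KG(A)=\infty$. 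Note also that your detour through Theorem \ref{t12}(1) to conclude that $A$ has nonpolynomial growth is not needed for either route. So the skeleton of your proof is right, but the decisive reduction for the second condition must go through a non-domestic string factor algebra of $A$ rather than a direct pointed-module-lattice construction over $A$.
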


\begin{proof} Assume that $\KG(A)$ is finite. Then Theorem \ref{t3} (2) yields $\KG(R)$ is finite (note that $F_{\lambda}$ is not dense in general, see \cite[Corollary 2.5]{Sk3}) and we show that in this case $R$ does not contain a convex subcategory which is hypercritical, pg-critical or tubular, and the number of $G$-orbits of $G$-periodic lines in $R$ is finite. First recall that if $B$ is a convex subcategory of $R$, then $\KG(B)\leq\KG(R)$ by Lemma \ref{0l2}. Therefore, if $R$ contains a convex subcategory $B$ which is hypercritical or pg-critical, then $\infty=\KG(B)\leq\KG(R)$ (see Remark \ref{r8}) and thus $\KG(R)=\infty$. We conclude the same if $R$ contains a convex subcategory which is tubular, because tubular algebras have Krull-Gabriel dimension undefined \cite{Geigle1985}. Finally, assume that the number of $G$-orbits of $G$-periodic lines in $R$ is infinite. Without loss of generality we assume that $R$ does not contain a convex subcategory which is hypercritical or pg-critical. Then it follows from \cite[Lemma 6.1 (3), Theorem 6.2]{KaPa2} and their proofs that $A=R\slash G$ has a factor algebra $C$ which is string of non-domestic type. Hence $\KG(C)=\infty$ by \cite{Sch2} and since $\KG(C)\leq\KG(A)$ by Lemma \ref{0l2} we get $\KG(A)=\infty$, contrary to our assumption that $\KG(A)$ is finite. These arguments show that we can apply Theorem \ref{t12} (2) and conclude that $A$ is of domestic type. 
\end{proof}

\begin{rem}\label{r9} It seems that our techniques are not sufficient to prove the converse of Theorem \ref{t13}. Such a result, together with the above theorem, implies that Prest's conjecture holds for the class of algebras with strongly simply connected Galois coverings. However, recall that it is proved in \cite{We} that for a strongly simply connected algebra $A$ the following statements are equivalent: (1) $\KG(A)=2$, (2) $\KG(A)$ exists and (3) $A$ is of domestic type. This result applies the first version of Theorem \ref{t12} (2), proved only for strongly simply connected algebras (and hence without the redundant assumption that the number of $G$-orbits of $G$-periodic lines in $A$ is finite). We believe that the three equivalences can be generalized to the case of algebras with strongly simply connected Galois coverings, perhaps applying also the results of \cite{LPP} where the authors prove that string algebras of domestic type have finite Krull-Gabriel dimension.
\end{rem}

\subsection{Weighted surface algebras}

In the final part of the section we show an application of Theorem \ref{t3} (1) to \emph{weighted surfaces algebras}, introduced in \cite{ErSk1}. The class of weighted surface algebras is a large class of symmetric periodic algebras of period 4 (with few exceptions), associated to triangulations of real compact surfaces. We refer to \cite{ErSk1,ErSk2,ErSk3} for the background and motivation to study this class. 

Our goal is to show that weighted surface algebras of two families, $D(\lambda)^{(1)}$ and $D(\lambda)^{(2)}$ \cite{ErSk2}, have Krull-Gabriel dimension undefined. This is only to give a flavor of our main results from \cite{EJ-PP} where we prove, in particular, that all weighted surface algebras have Krull-Gabriel dimension undefined.

The family $D(\lambda)^{(1)}$, $\lambda \in K^*=K\setminus\{0\}$, is given by $2$-regular quiver $Q(D(\lambda)^{(1)})$:
\[
  \xymatrix{
    1
    \ar@(ld,ul)^{\xi}[]
    \ar@<.5ex>[r]^{\beta}
    & 3
    \ar@<.5ex>[l]^{\alpha}
    \ar@(ru,dr)^{\gamma}[]
  }
\]
permutation $f$ with orbits
$(\xi\ \beta\ \alpha)$, $(\gamma)$,
and the relations:
\begin{align*}
&&
 \alpha \xi &= \lambda \gamma \alpha \beta \gamma \alpha ,
 &
 \xi \beta &= \lambda \beta \gamma \alpha \beta \gamma ,
 &
 \gamma^2 &= \lambda \alpha \beta \gamma \alpha \beta ,
 &
 \beta \alpha &=  \xi ,
  &
 \\
 \alpha \xi^2 &= 0 ,
 &
 \xi \beta \gamma &= 0 ,
 &
 \gamma^2 \alpha &= 0 ,
 &
 \xi^2 \beta &= 0 ,
 &
 \gamma \alpha \xi &= 0 ,
 &
 \beta \gamma^2 &= 0.
\end{align*}

\noindent The family $D(\lambda)^{(2)}$, $\lambda \in K^*$, is given by $2$-regular quiver $Q(D(\lambda)^{(2)})$:
\[
  \xymatrix@R=1pc{
   & 1 \ar[rd]^{\beta} \ar@<-.5ex>[dd]_{\xi}  \\
   3   \ar@(ld,ul)^{\varrho}[]  \ar[ru]^{\alpha}
   && 4   \ar@(ru,dr)^{\gamma}[]  \ar[ld]^{\nu}   \\
   & 2 \ar[lu]^{\delta}  \ar@<-.5ex>[uu]_{\eta}
  }
\]
permutation $f$ with orbits
$(\alpha\ \xi\ \delta)$, $(\beta\ \nu\ \eta)$, $(\varrho)$, $(\gamma)$
and the set of relations:
\begin{align*}
 \alpha \xi &= \lambda \varrho \alpha \beta \gamma \nu ,
 &
 \xi \delta &= \lambda \beta \gamma \nu \delta \varrho ,
 &
 \varrho^2 &= \lambda \alpha \beta \gamma \nu \delta ,
 &
 \delta \alpha &=  \eta ,
 &
 \alpha  \xi \eta &= 0 ,
 \\
 \nu \eta &= \lambda \gamma \nu \delta \varrho \alpha ,
 &
 \eta \beta &= \lambda \delta \varrho \alpha \beta \gamma ,
 &
 \gamma^2 &= \lambda \nu \delta \varrho \alpha \beta ,
 &
 \beta \nu &=  \xi ,
 &
 \xi \eta \beta &= 0 ,
 \\
 \xi \delta \varrho &= 0 ,
 &
 \nu \eta \xi &= 0 ,
 &
 \eta \beta \gamma &= 0 ,
 &
 \varrho^2 \alpha &= 0 ,
 &
  \gamma^2 \nu &= 0 ,
 \\
 \varrho \alpha \xi &= 0 ,
 &
 \eta \xi \delta &= 0 ,
 &
 \gamma \nu \eta &= 0 ,
 &
 \delta \varrho^2 &= 0 ,
 &
 \beta \gamma^2 &= 0 .
\end{align*}

The following theorem is a part of \cite[Theorem 4.3]{EJ-PP}

\begin{thm}\label{t14} We have $\KG(D(\lambda)^{(1)})=\KG(D(\lambda)^{(2)})=\infty$.
\end{thm}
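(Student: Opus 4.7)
The plan is to apply Theorem~\ref{t3}(1), which says that any Galois $G$-covering $F\colon R\to A$ satisfies the implication $\KG(R)=\infty \Rightarrow \KG(A)=\infty$. So for each $A\in\{D(\lambda)^{(1)},D(\lambda)^{(2)}\}$ the goal will be to exhibit an admissible Galois covering $F\colon R\to A$ whose source $R$ is a locally bounded $K$-category with $\KG(R)=\infty$. To force the latter, I would locate inside $R$ either a convex subcategory or a factor category $B$ of a known "bad" type --- hypercritical, pg-critical, tubular, or non-domestic string --- and invoke Lemma~\ref{0l2} together with the undefinedness of $\KG(B)$ recalled in Remark~\ref{r8} and in \cite{Geigle1985,Sch2}. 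The conclusion would then be the chain $\infty=\KG(B)\le\KG(R)\le\KG(A)$.

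Constructing the covering is the first task. The algebras $D(\lambda)^{(i)}$ are symmetric of period $4$ and arise from triangulations of real compact surfaces endowed with the permutation $f$ described in \cite{ErSk1,ErSk2}; they carry natural cyclic gradings that yield admissible Galois coverings with covering groups of the form $\mathbb{Z}$ or a finite cyclic group. Concretely, for $D(\lambda)^{(1)}$ the orbit structure $(\xi\ \beta\ \alpha)(\gamma)$ together with the loop at vertex $3$ suggests a $\ZZ$-grading that unwinds the loops $\xi$ and/or $\gamma$ into an infinite linear quiver, producing an infinite bound quiver category $R$ built by gluing translates of the original quiver along the $f$-orbits. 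An analogous, combinatorially richer construction applies to $D(\lambda)^{(2)}$, whose four vertices and orbits $(\alpha\ \xi\ \delta)(\beta\ \nu\ \eta)(\varrho)(\gamma)$ leave more room to produce the covering.

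The second task is to identify in $R$ a convex or factor subcategory $B$ with $\KG(B)=\infty$. Given the explicit monomial zero-relations $\alpha\xi^2=0$, $\xi\beta\gamma=0$, $\gamma^2\alpha=0$, $\xi^2\beta=0$, $\gamma\alpha\xi=0$, $\beta\gamma^2=0$ (and their analogs for $D(\lambda)^{(2)}$), a natural first attempt is to kill the deformation relations (the terms with coefficient $\lambda$) to obtain a factor which is special biserial, in particular string. Non-domesticity of this string factor --- witnessed by two independent primitive bands lifted from the surface combinatorics --- would then give $\KG = \infty$ by \cite{Sch2}. Alternatively, one may look for a convex subcategory that is pg-critical, using the lists from \cite{NoSk} and the $\KG=\infty$ result recalled in Remark~\ref{r8}.

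The main obstacle lies in the first step. Weighted surface algebras are neither triangular nor locally representation-finite, so the classical universal covering construction of \cite{MP} does not apply verbatim; the covering must be produced from the periodicity-induced gradings inherited from the surface structure, and this has to be done by hand for each family. Once the covering is written down as an explicit bound quiver category, the second step should reduce to a finite combinatorial check, but the presence of cubic and quintic $\lambda$-deformations of monomial relations makes even this verification nontrivial. I do not foresee difficulty in the final assembly --- Lemma~\ref{0l2} and Theorem~\ref{t3}(1) fit together immediately --- so the whole argument hinges on writing down the right covering and then isolating the convex or factor subcategory of bad type.
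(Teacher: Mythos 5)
Your overall strategy (reduce to a category with a known bad convex subcategory, then push $\KG=\infty$ forward via Theorem~\ref{t3}~(1) and Lemma~\ref{0l2}) is the right one, but the order in which you apply the two reductions creates a genuine gap. You propose to first construct an admissible Galois covering $F\colon R\to D(\lambda)^{(i)}$ of the weighted surface algebra itself and only then locate a bad subcategory in $R$. The construction of such a covering is exactly the step you leave open, and it is not merely a technical chore: the commutativity relations of $D(\lambda)^{(i)}$ (e.g.\ $\beta\alpha=\xi$ and $\alpha\xi=\lambda\gamma\alpha\beta\gamma\alpha$) identify paths of different lengths and different winding behaviour around the loops, which obstructs any naive unwinding of the quiver into a $G$-invariant bound quiver category. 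The paper never covers $D(\lambda)^{(i)}$ at all.

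The missing idea is to quotient \emph{first} and cover \emph{second}. The paper passes to the factor algebra $A^{(i)}$ of $D(\lambda)^{(i)}$ obtained by replacing each commutativity relation $\omega-v$ by the pair of zero-relations $\omega=0$, $v=0$; since $A^{(i)}$ is a factor category, Lemma~\ref{0l2}~(2) gives $\KG(A^{(i)})\leq\KG(D(\lambda)^{(i)})$ with no covering theory involved. The resulting monomial algebra $A^{(i)}$ does admit a Galois covering $R\to A^{(i)}$ by a locally bounded category containing a pg-critical finite convex subcategory $B$ --- this is imported wholesale from the proof of \cite[Lemma 6.9]{ErSk2}, not verified by a fresh combinatorial check --- whence $\infty=\KG(B)\leq\KG(R)\leq\KG(A^{(i)})\leq\KG(D(\lambda)^{(i)})$ by Remark~\ref{r8}, Lemma~\ref{0l2} and Theorem~\ref{t3}~(1). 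You do gesture at killing the $\lambda$-deformation terms to get a string-like factor, but you place that move inside the hypothetical covering $R$ rather than using it to sidestep the covering of $D(\lambda)^{(i)}$ altogether; reordering those two steps, and invoking \cite{ErSk2} for the covering of the monomial quotient, is what closes the argument.
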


\begin{proof} In the proof we follow the lines of \cite[Lemma 6.9]{ErSk2} where it is shown that $D(\lambda)^{(1)}$, $D(\lambda)^{(2)}$ are of non-polynomial growth. We apply freely Lemma \ref{0l2}. 

Denote by $(Q^{(i)}, I^{(i)})$ the bound quiver of an algebra $D(\lambda)^{(i)}$, for $i \in \{1,2\}$. Let  $A^{(i)}$ be the quotient algebra of $D(\lambda)^{(i)}$ given by the Gabriel quiver $Q^{(i)}$ and the ideal of relations generated by zero-relations $\omega=0$ and $v=0$ for any commutativity relation $\omega-v$ in $I^{(i)}$. It follows from the proof of \cite[Lemma 6.9]{ErSk2} that $A^{(i)}$ admits a Galois covering $F^{(i)}\colon R \rightarrow A^{(i)}$ by a locally bounded $K$-category $R$ such that $R$ contains a finite convex subcategory $B$ which is pg-critical algebra. Hence we have the following diagram:
      \vspace{-0.3cm}\[
    \begin{tikzpicture}
    [scale=.4]
    \node (D) at (4.6,4) {$D(\lambda)^{(i)}$};
    \node (R) at (-4,0) {$R$};
    \node (A) at (2,0) {$R/G^{(i)}$};
    \node (A') at (4.6,0) {$=A^{(i)}$};
    \node (B) at (-4,-3.4) {$B$};
    \draw[->,thick]
    (B) edge node [left] {$\iota$} (R)
    (R) edge node [above] {$F$} (A)
    (D) edge node [right] {$p$} (A')
    ;
    \end{tikzpicture}\vspace{-0.3cm}
    \] 
where $p \colon D(\lambda)^{(i)} \rightarrow A^{(i)}$  is a surjection of $K$-algebras and $\iota \colon B \rightarrow R$ is an embedding of locally bounded $K$-categories. Since $B$ is pg-critical, we get $\KG(B) =\infty$, see Remark \ref{r8}, and we conclude that $\KG(R) =\infty$, because $\KG(B)\leq \KG(R)$. Applying Theorem \ref{t3} (1) to the Galois covering $F^{(i)}\colon R \rightarrow A^{(i)}$ we get $\KG(A^{(i)})=\infty$ and so $\KG(D(\lambda)^{(i)})=\infty$, for $i \in \{1,2\}$ and $\lambda \in K^*$.
\end{proof}

\section{On the conjecture of Prest}

We finish the paper with few remarks concerning possible applications of our results, especially Theorem \ref{t3}, in the conjecture of M. Prest on Krull-Gabriel dimension. Since the general form of the conjecture seems to be out of reach, we believe one should restrict considerations to the case of algebras $A$ having Galois $G$-coverings $R\ra A$ such that $G$ acts freely on the isomorphism classes in $\ind(R)$ (for example, $G$ may be torsion-free) and every weakly $G$-periodic $R$-module is linear.\footnote{Recall that this is typically the case of algebras with strongly simply connected Galois coverings, string algebras in particular, see \cite{Sk3} and \cite{KaPa3}.} Indeed, based on the renowned result \cite[Theorem 3.6]{DoSk}, we get the following theorem which in our opinion should attract much more attention in this context (see \cite[Theorem 2.4]{Sk3} for a special version). 

We denote by $\CL_0$ a fixed set of representatives of all $G$-orbits of $G$-periodic lines in $R$. As usual, $K[T,T^{-1}]$ is the algebra of \emph{Laurent polynomials}. 

\begin{thm}\label{t15} Assume that $R$ is a locally bounded $K$-category over algebraically closed field $K$, $G$ an admissible group of $K$-linear automorphisms of $R$ and $F:R\ra A\cong R\slash G$ the Galois covering. Assume that the group $G$ acts freely on the isomorphism classes in $\ind(R)$ and every weakly $G$-periodic $R$-module is linear. The following assertions hold:
\begin{enumerate}[\rm(1)]
  \item Any module $Z\in\ind(A)$ of the second kind is of the form $V\otimes_{K[T,T^{-1}]}F_{\lambda}(M_{L})$ for some $L\in\CL_{0}$ and indecomposable finite dimensional $K[T,T^{-1}]$-module $V$.
  \item We have $$\Gamma_{A}=(\Gamma_{R}\slash G)\vee (\bigvee_{L\in\CL_{0}\snull}\Gamma_{K[T,T^{-1}]})$$ where $\Gamma_{K[T,T^{-1}]}$ denotes the Auslader-Reiten quiver of the category of finite dimensional $K[T,T^{-1}]$-modules.
\end{enumerate}
\end{thm}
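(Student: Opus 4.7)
The plan is to derive Theorem~\ref{t15} as a concrete specialization of the classification of indecomposable modules of the second kind due to Dowbor and Skowro\'nski \cite[Theorem 3.6]{DoSk}, combined with the classical facts on the push-down functor $F_\lambda$ recalled in Theorem~\ref{0t1}. First I would apply \cite[Theorem 3.6]{DoSk}: since the group $G$ acts freely on isomorphism classes in $\ind(R)$, any $Z\in\ind(A)$ of the second kind is constructed from a weakly $G$-periodic $R$-module $M$ via a tensor product of the shape $V\otimes_{K[G_M]}F_\lambda(M)$, where $V$ is an indecomposable finite dimensional $K[G_M]$-module and $G_M$ is the stabilizer. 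This is the deep input of the whole argument and I would invoke it as a black box.

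Next, I would use the standing hypothesis that every weakly $G$-periodic $R$-module is linear to identify $M$, up to isomorphism, with the canonical module $M_L$ of a $G$-periodic line $L$ in $R$. Because $M_L$ is supported on a linear quiver of type ${}_{\infty}\mathbb{A}_{\infty}$ whose stabilizer is generated by a single translation, we have $G_{M_L}\cong G_L\cong\mathbb{Z}$, which gives the identification of group algebras $K[G_{M_L}]\cong K[T,T^{-1}]$. Replacing $L$ by its unique representative in $\mathcal{L}_0$ in the same $G$-orbit does not change the isomorphism class of $F_\lambda(M_L)$, by Theorem~\ref{0t1}(1). Hence the Dowbor--Skowro\'nski tensor product takes exactly the stated form $V\otimes_{K[T,T^{-1}]}F_\lambda(M_L)$, proving assertion~$(1)$.

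For assertion~$(2)$, I would split $\Gamma_A$ into the components generated by indecomposables of the first and second kind. On the first-kind side, Theorem~\ref{0t1} and the remark following it give an injection $\ind(R)/G\hookrightarrow\ind(A)$ induced by $F_\lambda$, and the fact that $F_\lambda$ preserves Auslander--Reiten sequences (with $G$ torsion-free acting freely on $\ind(R)$) identifies $\Gamma_R/G$ with the full subquiver of $\Gamma_A$ supported on first-kind modules. On the second-kind side, the parameterization in~$(1)$ is a bijection between indecomposable $K[T,T^{-1}]$-modules and (isomorphism classes of) indecomposable $A$-modules of the second kind attached to a fixed $L\in\mathcal{L}_0$; I would then invoke the standard fact that the functor $V\mapsto V\otimes_{K[T,T^{-1}]}F_\lambda(M_L)$ is compatible with almost split sequences of $K[T,T^{-1}]$-modules, so it yields a full embedding of $\Gamma_{K[T,T^{-1}]}$ into $\Gamma_A$ as a union of tubes, disjoint from $\Gamma_R/G$ and from each other for different orbits. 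Assembling these components produces the decomposition $\Gamma_A=(\Gamma_R/G)\vee\bigvee_{L\in\mathcal{L}_0}\Gamma_{K[T,T^{-1}]}$.

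The main obstacle is the compatibility with almost split sequences needed in the second step: one must know that the tensor-product construction from \cite[Theorem 3.6]{DoSk} transports irreducible morphisms and Auslander--Reiten translates in $\mod(K[T,T^{-1}])$ to irreducible morphisms and $\tau_A$ among second-kind $A$-modules. This is where the linearity hypothesis is essential, since it forces $G_{M_L}$ to be cyclic of rank one and allows one to identify the relevant endomorphism ring of $F_\lambda(M_L)$ with $K[T,T^{-1}]$, as in \cite[Theorem 2.4]{Sk3}.
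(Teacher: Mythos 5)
Your derivation matches the paper's treatment: the paper offers no proof of Theorem \ref{t15} beyond citing \cite[Theorem 3.6]{DoSk} (with \cite[Theorem 2.4]{Sk3} as a special version) together with the observation that $G_{M_L}=G_L\cong\ZZ$ forces $KG_L\cong K[T,T^{-1}]$, which is precisely the reduction you carry out in greater detail. One minor slip: in part (2) you invoke torsion-freeness of $G$, whereas the stated hypothesis is only that $G$ acts freely on the isomorphism classes in $\ind(R)$ — but that is exactly the condition under which $F_\lambda$ preserves almost split sequences and induces the injection $\ind(R)/G\hookrightarrow\ind(A)$, so your argument is unaffected.
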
 We recall that if $L\in\CL_0$, then $G_{M_{L}}=G_{L}\cong\ZZ$. Hence the group algebra $KG_{L}$ is isomorphic with $K[T,T^{-1}]$ and the canonical action of $G_{L}$ on $L$ gives a left $K[T,T^{-1}]$-module structure on the module $F_{\lambda}(M_{L})$. We refer to \cite{DoSk} for the details.

Theorem \ref{t15} shows deep connections between representation theories of $R$ and $A$ in the above situation. In the context of Prest's conjecture, it particularly suggests to look for relations between the finiteness of $\KG(R)$ and a condition expressing \emph{domesticity of $(\Gamma_{R}\slash G)$}, equivalently, \emph{domesticity of the category of modules of the first kind}. Since such concepts are not standard, we believe that the following definition could be of some use. The definition is based on the well known definition of domestic representation type, see \cite[XIX 3.12]{SiSk3} and also a generalization in \cite[XIX 3.1]{SiSk3}. 

\begin{df}\label{d2} Assume that the field $K$ is algebraically closed and let $R$ be a locally bounded $K$-category. A full subcategory $\CA\subseteq\mod(R)$ is of \emph{domestic representation type} if an only if there are $K[t]$-$R$-bimodules ${}_{K[t]}N^{(1)}_{R},\cdots,{}_{K[t]}N^{(n)}_{R}$ that are finitely generated and free as left $K[t]$-modules such that for any integer $d\geq 1$ all but a finite number of isomorphism classes of indecomposable modules in $\CA$ of dimension $d$ are isomorphic to $R$-modules of the form $X\otimes_{K[t]}N^{(i)}_{R}$ where $i=1,\cdots,n$ and $X$ is a finite dimensional indecomposable $K[t]$-module.
\end{df}

We pose the following two problems. The first one is a version of Prest's conjecture for locally bounded $K$-categories, but in a restricted setting.

\begin{prob}\label{pro1} Assume that $R=\und{KQ_{R}}\slash I_{R}$ is a locally bounded $K$-category over algebraically closed field $K$ and $G$ is an admissible group of $K$-linear automorphisms of $R$. Let $F:R\ra A\cong R\slash G$ be the associated Galois covering. Moreover, assume that the following conditions are satisfied: 
\begin{itemize}
  \item the quiver $Q_R$ is a tree,
  \item the group $G$ acts freely on the isomorphism classes in $\ind(R)$,
  \item every weakly $G$-periodic $R$-module is linear. 
\end{itemize} Under these assumptions, is the category of $\mod(A)_{1}$ of $A$-modules of the first kind of domestic representation type if and only if $\KG(R)$ is finite? \epv
\end{prob}

The second problem may be seen as a possible path of generalizing Theorem \ref{t8} to the realm of locally bounded categories which are not necessarily locally support-finite.

\begin{prob}\label{pro2} Under the same assumptions as in Problem \ref{pro1}, is $\KG(R)$ finite if and only if $\KG(A)$ is finite? Does the equality $\KG(R)=\KG(A)$ hold? \epv
\end{prob}

While the above problems are undoubtedly challenging, they seem accessible thanks to restrictions imposed on $R$.  We demonstrate below how positive solutions to these problems could be useful in the verification of the conjecture of Prest.

\begin{dis}\label{dis1} Assume that the field $K$ is algebraically closed. Let $A$ be a finite dimensional $K$-algebra and $F:R\ra A$ the universal Galois covering. Recall that then the quiver $Q_R$ of $R$ is a tree. Assume that the conditions of Theorem \ref{t15} are satisfied which in particular means that $$\Gamma_{A}=(\Gamma_{R}\slash G)\vee(\bigvee_{L\in\CL_{0}\snull}\Gamma_{K[T,T^{-1}]}).$$ Assume that $\KG(A)$ is finite. This should imply that the set $\CL_0$ of representatives of all $G$-orbits of $G$-periodic lines in $R$ is finite. Indeed, in principle it should be possible to repeat arguments from Section 6 of \cite{KaPa3} to show that if $\CL_0$ is infinite, then $A$ possesses a quotient algebra which is string of non-domestic type.\footnote{It could be necessary to assume that $R$ is not of wild type to draw this conclusion, but in this case $A$ is also wild, which contradicts with the finiteness of $\KG(A)$.} Then Theorem \ref{t3} implies that $\KG(R)$ is finite and if the solution to Problem \ref{pro1} is positive, we get that the category $\mod(A)_{1}$ is of domestic type. Then the shape of $\Gamma_{A}$ should allow to conclude that $A$ itself is of domestic representation type. On the other hand, if $A$ is of domestic type, then the shape of $\Gamma_{A}$ implies that $\CL_{0}$ is finite and $\mod(A)_{1}$ is of domestic type as well. If the solution to the Problem \ref{pro1} is positive, then $\KG(R)$ is finite. In turn, if solution to the Problem \ref{pro2} is positive, we get that $\KG(A)$ is finite. 
\end{dis}

\begin{rem}\label{r10} In private discussions, A. Skowro\'{n}ski suggested that if there were any counterexamples to the general form of Prest's conjecture, one should try look at the algebras studied in \cite{BDS}. These are special classes of domestic but quite complex algebras, with many nonperiodic Auslander-Reiten components. Such algebras could be candidates for domestic algebras with the Krull-Gabriel dimension undefined.
\end{rem}

\bibsection

\begin{biblist}

\bib{As}{article}{
   author={Assem, I.},
   title={A course on cluster tilted algebras},
   conference={
      title={Homological methods, representation theory, and cluster
      algebras},
   },
   book={
      series={CRM Short Courses},
      publisher={Springer, Cham},
   },
   date={2018},
   pages={127--176},
}





\bib{AsSiSk}{book}{
   author={Assem, I.},
   author={Simson, D.},
   author={Skowro\'{n}ski, A.},
   title={Elements of the representation theory of associative algebras.
   Vol. 1},
   series={London Mathematical Society Student Texts},
   volume={65},
   note={Techniques of representation theory},
   publisher={Cambridge University Press, Cambridge},
   date={2006},
}

\bib{AsSk1}{article}{
 AUTHOR = {Assem, I.},
 AUTHOR = {Skowro\'nski, A.},    
     TITLE = {Algebras with cycle-finite derived categories},
   JOURNAL = {Math. Ann.},
  FJOURNAL = {Mathematische Annalen},
    VOLUME = {280},
      YEAR = {1988},
    NUMBER = {3},
     PAGES = {441--463},

}

\bib{AsSk3}{article}{
   author={Assem, I.},
   author={Skowro\'{n}ski, A.},
   title={On some classes of simply connected algebras},
   journal={Proc. London Math. Soc.},
   volume={56},
   date={1998},
   pages={417--450},
}

\bib{AsSk4}{article}{
   author={Assem, I.},
   author={Skowro\'{n}ski, A.},
   title={On tame repetitive algebras},
   journal={Fund. Math.},
   volume={142},
   date={1993},
   number={1},
   pages={59--84},
}

\bib{Au}{collection.article}{
   author={Auslander, M.},
   title={A functorial approach to representation theory},
   book={
      title={Representations of Algebras},
      series={Lecture Notes in Math.},
      volume={944},
      publisher={Springer, Berlin-New York},
   },
   date={1982},
   pages={105--179},
}

\bib{Au0}{article}{

AUTHOR = {Auslander, M.},
     TITLE = {Functors and morphisms determined by objects},
 BOOKTITLE = {Representation theory of algebras ({P}roc. {C}onf., {T}emple
              {U}niv., {P}hiladelphia, {P}a., 1976)},
    SERIES = {Lect. Notes Pure Appl. Math.},
    VOLUME = {Vol. 37},
     PAGES = {1--244},
 PUBLISHER = {Dekker, New York-Basel},
      YEAR = {1978},
}

\bib{AuRe}{article}{
   author={Auslander, M.},
   author={Reiten, I.},
   title={Applications of contravariantly finite subcategories},
   journal={Adv. Math.},
   volume={86},
   date={1991},
   number={1},
   pages={111--152},
}



\bib{BDS}{article}{
AUTHOR = {Bobi\'nski, G.},
author={Dr\"axler, P.},
author={Skowro\'nski, A.},              
     TITLE = {Domestic algebras with many nonperiodic {A}uslander-{R}eiten
              components},
   JOURNAL = {Comm. Algebra},
  FJOURNAL = {Communications in Algebra},
    VOLUME = {31},
      YEAR = {2003},
    NUMBER = {4},
     PAGES = {1881--1926},

}

\bib{BoGa}{article}{
   author={Bongartz, K.},
   author={Gabriel, P.},
   title={Covering spaces in representation-theory},
   journal={Invent. Math.},
   volume={65},
   date={1981/82},
   number={3},
   pages={331--378},
}
	
\bib{BrGa}{article}{
   author={Bretscher, O.},
   author={Gabriel, P.},
   title={The standard form of a representation-finite algebra},
   journal={Bull. Soc. Math. France},
   volume={111},
   date={1983},
   pages={21--40},
}



\bib{Bu}{article}{
author={Bushell, M.}
title={Ziegler Spectra of Self Injective Algebras of Polynomial Growth}
note={arXiv preprint: arXiv:1712.01575}
pages={1--50}

}

\bib{DoSk}{article}{
   author={Dowbor, P.},
   author={Skowro\'{n}ski, A.},
   title={Galois coverings of representation-infinite algebras},
   journal={Comment. Math. Helv.},
   volume={62},
   date={1987},
   number={2},
   pages={311--337},
}

\bib{DoLeSk}{article}{
author={Dowbor, P.},
author={Lenzing, H.}
author={Skowro\'{n}ski, A.},
TITLE = {Galois coverings by algebras of locally support-finite
              categories},
 BOOKTITLE = {Representation theory, {I} ({O}ttawa, {O}nt., 1984)},
    SERIES = {Lecture Notes in Math.},
    VOLUME = {1177},
     PAGES = {91--93},
 PUBLISHER = {Springer, Berlin},
      YEAR = {1986},

}

\bib{EJ-PP}{article}{
author={Erdmann, K.},
author={Jaworska-Pastuszak, A.},
author={Pastuszak, G.}
title={On Krull-Gabriel dimension of weighted surface
algebras}
note={preprint}
}

\bib{ErKeSk}{article}{
   author={Erdmann, K.},
   author={Kerner, O.},
   author={Skowro\'{n}ski, A.},
   title={Self-injective algebras of wild tilted type},
   journal={J. Pure Appl. Algebra},
   volume={149},
   date={2000},
   number={2},
   pages={127--176},
}

\bib{ErSk1}{article}{
AUTHOR = {Erdmann, K.}, 
author={Skowro\'nski, A.},
     TITLE = {Weighted surface algebras},
   JOURNAL = {J. Algebra},
  FJOURNAL = {Journal of Algebra},
    VOLUME = {505},
      YEAR = {2018},
}

\bib{ErSk2}{article}{
AUTHOR = {Erdmann, K.}, 
author={Skowro\'nski, A.},
TITLE = {Weighted surface algebras: general version},
   JOURNAL = {J. Algebra},
  FJOURNAL = {Journal of Algebra},
    VOLUME = {544},
      YEAR = {2020},
     PAGES = {170--227},

}

\bib{ErSk3}{article}{
AUTHOR = {Erdmann, K.}, 
author={Skowro\'nski, A.},
TITLE = {Corrigendum to "Weighted surface algebras: general version" [J. Algebra 544 (2020) 170–227]},
   JOURNAL = {J. Algebra},
  FJOURNAL = {Journal of Algebra},
    VOLUME = {569},
      YEAR = {2021},
     PAGES = {875--889},

}

\bib{FPN}{article}{
AUTHOR = {Palmquist, J. Fisher},
author={Newell D. C.}
     TITLE = {Bifunctors and adjoint pairs},
   JOURNAL = {Trans. Amer. Math. Soc.},
  FJOURNAL = {Transactions of the American Mathematical Society},
    VOLUME = {155},
      YEAR = {1971},
     PAGES = {293--303},
      ISSN = {0002-9947,1088-6850},

}


\bib{Ga}{article}{
   author={Gabriel, P.},
   title={The universal cover of a representation-finite algebra},
   conference={
      title={Representations of algebras},
   },
   book={
      series={Lecture Notes in Math.},
      volume={903},
      publisher={Springer, Berlin-New York},
   },
   date={1981},
   pages={68--105},
}

\bib{Ga1}{article}{
AUTHOR = {Gabriel, P.},
     TITLE = {Des cat\'egories ab\'eliennes},
   JOURNAL = {Bull. Soc. Math. France},
  FJOURNAL = {Bulletin de la Soci\'et\'e{} Math\'ematique de France},
    VOLUME = {90},
      YEAR = {1962},
     PAGES = {323--448},
}

\bib{Ga2}{article}{
 AUTHOR = {Gabriel, P.},
     TITLE = {Unzerlegbare {D}arstellungen. {I}},
   JOURNAL = {Manuscripta Math.},
  FJOURNAL = {Manuscripta Mathematica},
    VOLUME = {6},
      YEAR = {1972},
     PAGES = {71--103; correction, ibid. 6 (1972), 309},
}

\bib{Geigle1985}{article}{
   author={Geigle, W.},
   title={The Krull-Gabriel dimension of the representation theory of a tame hereditary Artin algebra and applications to the structure of exact sequences},
   journal={Manuscripta Math.},
   volume={54},
   date={1985},
   number={1-2},
   pages={83--106},
}

\bib{Ge2}{article}{
   author={Geigle, W.},
   title={Krull dimension and Artin algebras},
   book={
      title={Representation theory, I},
      series={Lecture Notes in Math.},
      volume={1177},
      publisher={Springer, Berlin},
   },
   date={1986},
   pages={135--155},
}

\bib{GP}{article}{
AUTHOR = {Gregory, L.}
author={Prest, M.},
     TITLE = {Representation embeddings, interpretation functors and
              controlled wild algebras},
   JOURNAL = {J. Lond. Math. Soc. (2)},
  FJOURNAL = {Journal of the London Mathematical Society. Second Series},
    VOLUME = {94},
      YEAR = {2016},
    NUMBER = {3},
     PAGES = {747--766},
}

\bib{HR}{article}{
   author={Happel, D.},
   author={Ringel, C. M.},
   title={Tilted algebras},
   journal={Trans. Amer. Math. Soc.},
   volume={274},
   date={1982},
   number={2},
   pages={399--443},
}


\bib{IPTZ}{article}{
AUTHOR = {Igusa, K.}, 
author = {Platzeck, M. I.}, 
author = {Todorov G.},
author = {Zacharia, D.},
     TITLE = {Auslander algebras of finite representation type},
   JOURNAL = {Comm. Algebra},
  FJOURNAL = {Communications in Algebra},
    VOLUME = {15},
      YEAR = {1987},
    NUMBER = {1-2},
     PAGES = {377--424},
}

\bib{IT}{article}{
AUTHOR = {Igusa, K.}, 
author = {Todorov G.},
     TITLE = {A characterization of finite {A}uslander-{R}eiten quivers},
   JOURNAL = {J. Algebra},
  FJOURNAL = {Journal of Algebra},
    VOLUME = {89},
      YEAR = {1984},
    NUMBER = {1},
     PAGES = {148--177},

}

\bib{J-PP1}{article}{
   author={Jaworska-Pastuszak, A.},
   author={Pastuszak, G.},
   TITLE = {On {K}rull-{G}abriel dimension of cluster repetitive
              categories and cluster-tilted algebras},
      NOTE = {With an appendix by Grzegorz Bobi\'nski},
   JOURNAL = {J. Pure Appl. Algebra},
  FJOURNAL = {Journal of Pure and Applied Algebra},
    VOLUME = {229},
      YEAR = {2025},
    NUMBER = {1},
     PAGES = {Paper No. 107823, 15},
}



\bib{KaPa1}{article}{
  author={Kasjan, S.},
   author={Pastuszak, G.},
     TITLE = {On two tame algebras with super-decomposable pure-injective
              modules},
   JOURNAL = {Colloq. Math.},
  FJOURNAL = {Colloquium Mathematicum},
    VOLUME = {123},
      YEAR = {2011},
    NUMBER = {2},
     PAGES = {249--276},

}

\bib{KaPa2}{article}{
AUTHOR = {Kasjan, S.}, 
author={Pastuszak, G.},
     TITLE = {On the existence of super-decomposable pure-injective modules
              over strongly simply connected algebras of non-polynomial
              growth},
   JOURNAL = {Colloq. Math.},
  FJOURNAL = {Colloquium Mathematicum},
    VOLUME = {136},
      YEAR = {2014},
    NUMBER = {2},
     PAGES = {179--220},

}

\bib{KaPa3}{article}{
   author={Kasjan, S.},
   author={Pastuszak, G.},
   title={Super-decomposable pure-injective modules over algebras with
   strongly simply connected Galois coverings},
   journal={J. Pure Appl. Algebra},
   volume={220},
   date={2016},
   number={8},
   pages={2985--2999},
}



\bib{Kr2}{article}{
   author={Krause, H.},
   title={Generic modules over Artin algebras},
   journal={Proc. London Math. Soc. (3)},
   volume={76},
   date={1998},
   number={2},
   pages={276--306},
}

\bib{Kr}{article}{
   author={Krause, H.},
   title={The spectrum of a module category},
   journal={Mem. Amer. Math. Soc.},
   volume={149},
   date={2001},
   number={707},
}

\bib{LPP}{article}{
AUTHOR = {Laking, R.}, 
author = {Prest, M.}, 
author = {Puninski, G.},
     TITLE = {Krull-{G}abriel dimension of domestic string algebras},
   JOURNAL = {Trans. Amer. Math. Soc.},
  FJOURNAL = {Transactions of the American Mathematical Society},
    VOLUME = {370},
      YEAR = {2018},
    NUMBER = {7},
     PAGES = {4813--4840},
}

\bib{McL}{book}{
AUTHOR = {Mac Lane, S.},
     TITLE = {Categories for the working mathematician},
    SERIES = {Graduate Texts in Mathematics},
    VOLUME = {5},
   EDITION = {Second},
 PUBLISHER = {Springer-Verlag, New York},
      YEAR = {1998},
     PAGES = {xii+314},
      ISBN = {0-387-98403-8},

}

\bib{MP}{article}{
author={Martinez-Villa, R.}
author={de la Pe\~na, J. A.}
title={The universal cover of a quiver with relations}
journal={J. Pure. Appl. Algebra},
   volume={30},
   date={1983},
   pages={277--292},
}

\bib{Mi}{article}{
AUTHOR = {Mitchell, B.},
     TITLE = {Rings with several objects},
   JOURNAL = {Advances in Math.},
  FJOURNAL = {Advances in Mathematics},
    VOLUME = {8},
      YEAR = {1972},
     PAGES = {1--161},
      ISSN = {0001-8708},

}

\bib{NoSk}{article}{
AUTHOR = {N\"orenberg, R.}, 
author={Skowro\'nski, A.},
     TITLE = {Tame minimal non-polynomial growth simply connected algebras},
   JOURNAL = {Colloq. Math.},
  FJOURNAL = {Colloquium Mathematicum},
    VOLUME = {73},
      YEAR = {1997},
    NUMBER = {2},
     PAGES = {301--330},

}

\bib{P4}{article}{
   author={Pastuszak, G.},
   title={On Krull-Gabriel dimension and Galois coverings},
   journal={Adv. Math.},
   volume={349},
   date={2019},
   pages={959--991},
}

\bib{P5}{article}{
   author={Pastuszak, G.},
   TITLE = {On wild algebras and super-decomposable pure-injective
              modules},
   JOURNAL = {Algebr. Represent. Theory},
  FJOURNAL = {Algebras and Representation Theory},
    VOLUME = {26},
      YEAR = {2023},
    NUMBER = {3},
     PAGES = {957--965},
}

\bib{P6}{article}{
   author={Pastuszak, G.},
   title={Corrigendum to "On Krull-Gabriel dimension and Galois coverings" [Adv. Math. 349 (2019) 959–991]},
   journal={Adv. Math.},
   volume={438},
   date={2024},
   pages={6 pp.},
}

\bib{Pog}{article}{
AUTHOR = {Pogorza\l y, Z.},
     TITLE = {On star-free bound quivers},
   JOURNAL = {Bull. Polish Acad. Sci. Math.},
  FJOURNAL = {Bulletin of the Polish Academy of Sciences. Mathematics},
    VOLUME = {37},
      YEAR = {1989},
    NUMBER = {1-6},
     PAGES = {255--267},

}

\bib{Po}{book}{
   author={Popescu, N.},
   title={Abelian categories with applications to rings and modules},
   series={London Mathematical Society Monographs, No. 3},
   publisher={Academic Press, London-New York},
   date={1973},
}

\bib{Pr}{book}{
AUTHOR = {Prest, M.},
     TITLE = {Model theory and modules},
    SERIES = {London Mathematical Society Lecture Note Series},
    VOLUME = {130},
 PUBLISHER = {Cambridge University Press, Cambridge},
      YEAR = {1988},
     PAGES = {xviii+380},

}

\bib{Pr2}{book}{
   author={Prest, M.},
   title={Purity, spectra and localisation},
   series={Encyclopedia of Mathematics and its Applications},
   volume={121},
   publisher={Cambridge University Press, Cambridge},
   date={2009},
}

\bib{Pr3}{article}{
   AUTHOR = {Prest, M.},
     TITLE = {Superdecomposable pure-injective modules},
 BOOKTITLE = {Advances in representation theory of algebras},
    SERIES = {EMS Ser. Congr. Rep.},
     PAGES = {263--296},
 PUBLISHER = {Eur. Math. Soc., Z\"urich},
      YEAR = {2013},

}

\bib{Rie}{article}{
AUTHOR = {Riedtmann, C.},
     TITLE = {Representation-finite self-injective algebras of class
              {$A\sb{n}$}},
 BOOKTITLE = {Representation theory, {II} ({P}roc. {S}econd {I}nternat.
              {C}onf., {C}arleton {U}niv., {O}ttawa, {O}nt., 1979)},
    SERIES = {Lecture Notes in Math.},
    VOLUME = {832},
     PAGES = {449--520},
 PUBLISHER = {Springer, Berlin},
      YEAR = {1980},
 
}

\bib{Sch2}{article}{
AUTHOR = {Schr\"oer, J.},
     TITLE = {On the {K}rull-{G}abriel dimension of an algebra},
   JOURNAL = {Math. Z.},
  FJOURNAL = {Mathematische Zeitschrift},
    VOLUME = {233},
      YEAR = {2000},
    NUMBER = {2},
     PAGES = {287--303},

}

\bib{Sch3}{article}{
   author={Schr\"{o}er, J.},
   title={The Krull-Gabriel dimension of an algebra---open problems and
   conjectures},
   conference={
      title={Infinite length modules},
      address={Bielefeld},
      date={1998},
   },
   book={
      series={Trends Math.},
      publisher={Birkh\"{a}user, Basel},
   },
   date={2000},
   pages={419--424},
}

\bib{SiSk3}{book}{
   author={Simson, D.},
   author={Skowro\'{n}ski, A.},
   title={Elements of the representation theory of associative algebras.
   Vol. 3},
   series={London Mathematical Society Student Texts},
   volume={72},
   note={Representation-infinite tilted algebras},
   publisher={Cambridge University Press, Cambridge},
   date={2007},
}

\bib{SkBC}{article}{
   author={Skowro\'{n}ski, A.},
   title={Algebras of polynomial growth},
   conference={
      title={Topics in algebra, Part 1},
      address={Warsaw},
      date={1988},
   },
   book={
      series={Banach Center Publ.},
      volume={26},
   },
   date={1990},
   pages={535--568},
}

\bib{Sk5}{article}{
 author={Skowro\'{n}ski, A.},
TITLE = {The {K}rull-{G}abriel dimension of cycle-finite {A}rtin
              algebras},
   JOURNAL = {Algebr. Represent. Theory},
  FJOURNAL = {Algebras and Representation Theory},
    VOLUME = {19},
      YEAR = {2016},
    NUMBER = {1},
     PAGES = {215--233},

}

\bib{Sk4}{article}{
   author={Skowro\'{n}ski, A.},
   title={Selfinjective algebras: finite and tame type},
   conference={
      title={Trends in representation theory of algebras and related topics},
   },
   book={
      series={Contemp. Math.},
      volume={406},
      publisher={Amer. Math. Soc., Providence, RI},
   },
   date={2006},
   pages={169--238},
}

\bib{Sk2}{article}{
AUTHOR = {Skowro\'nski, A.},
     TITLE = {Simply connected algebras and {H}ochschild cohomologies},
 BOOKTITLE = {Representations of algebras ({O}ttawa, {ON}, 1992)},
    SERIES = {CMS Conf. Proc.},
    VOLUME = {14},
     PAGES = {431--447},
 PUBLISHER = {Amer. Math. Soc., Providence, RI},
      YEAR = {1993},
}

\bib{Sk1}{article}{

AUTHOR = {Skowro\'nski, A.},
     TITLE = {Simply connected algebras of polynomial growth},
   JOURNAL = {Compositio Math.},
  FJOURNAL = {Compositio Mathematica},
    VOLUME = {109},
      YEAR = {1997},
    NUMBER = {1},
     PAGES = {99--133},

}

\bib{Sk3}{article}{
AUTHOR = {Skowro\'nski, A.},
     TITLE = {Tame algebras with strongly simply connected {G}alois
              coverings},
   JOURNAL = {Colloq. Math.},
  FJOURNAL = {Colloquium Mathematicum},
    VOLUME = {72},
      YEAR = {1997},
    NUMBER = {2},
     PAGES = {335--351},
}

\bib{SkWa}{article}{
AUTHOR = {Skowro\'nski, A.},
author={Waschb\"usch, J.},     
     TITLE = {Representation-finite biserial algebras},
   JOURNAL = {J. Reine Angew. Math.},
  FJOURNAL = {Journal f\"ur die Reine und Angewandte Mathematik. [Crelle's
              Journal]},
    VOLUME = {345},
      YEAR = {1983},
     PAGES = {172--181},

}

\bib{Un}{article}{
AUTHOR = {Unger, L.},
     TITLE = {The concealed algebras of the minimal wild, hereditary
              algebras},
   JOURNAL = {Bayreuth. Math. Schr.},
  FJOURNAL = {Bayreuther Mathematische Schriften},
    NUMBER = {31},
      YEAR = {1990},
     PAGES = {145--154},
}

\bib{We}{article}{
AUTHOR = {Wenderlich, M.},
     TITLE = {Krull dimension of strongly simply connected algebras},
   JOURNAL = {Bull. Polish Acad. Sci. Math.},
  FJOURNAL = {Polish Academy of Sciences. Bulletin. Mathematics},
    VOLUME = {44},
      YEAR = {1996},
    NUMBER = {4},
     PAGES = {473--480},

}

\bib{Zi}{article}{
   author={Ziegler, M.},
   title={Model theory of modules},
   journal={Ann. Pure Appl. Logic},
   volume={26},
   date={1984},
   number={2},
   pages={149--213},
}
	
\end{biblist}
\end{document}